\newtheorem{theorem}{Theorem}[section]
\newtheorem{lemma}[theorem]{Lemma}
\newtheorem{proposition}[theorem]{Proposition}
\newtheorem{corollary}[theorem]{Corollary}
\newtheorem{remark}[theorem]{Remark}
   \newcommand{\R}{\mathbb{R}}
   \newcommand{\spn}[1]{\langle#1\rangle}
   \newcommand{\N}{\mathbb{N}}
\begin{document}

\numberwithin{equation}{section}

\title{Self-Similar Solutions to the Curvature Flow and its Inverse on the 2-dimensional Light Cone}

\author{F\'abio Nunes da Silva\footnote{ Universidade de Bras\'{\i}lia,
 Department of Mathematics,
 70910-900, Bras\'{\i}lia-DF,  and Universidade Federal do Oeste da Bahia, Brazil,  fabionuness@ufob.edu.br. 
Supported by the Universidade Federal do Oeste da Bahia during his graduate program at the Universidade de Bras\'ilia.} \qquad Keti Tenenblat\footnote{ Universidade de Bras\'{\i}lia,
 Department of Mathematics,
 70910-900, Bras\'{\i}lia-DF, Brazil, K.Tenenblat@mat.unb.br Partially supported by CNPq Proc. 312462/2014-0, Ministry of Science and Technology, Brazil and CAPES/Brazil-Finance Code 001.}
  }

\date{}

\maketitle{}

\begin{abstract}
We show that the solutions to the curvature flow (CF) for curves on the 2-dimensional light cone are in correspondence with the solutions to the inverse curvature flow (ICF). We prove that 
 the ellipses and the hyperboles are the only curves that evolve under homotheties. The ellipses are the only closed ones and they are ancient solutions. We show that a spacelike curve on the cone is a self-similar solution to the CF (resp. (ICF)) if, only if, its curvature (resp. inverse of its curvature) differs by a constant $c$ from being the inner product between its tangent vector field and a fixed vector $v$ of the 3-dimensional Minkowski space. The curve is a soliton solution when $c=0$. 
We prove that, for each vector $v$ there exist a 2-parameter family of self-similar solutions to the CF and to the ICF, on the light cone.  Moreover,  at each  end of such a curve the curvature is either  unbounded or it tends  to $0$ or  to the constant $c$. Explicitly given soliton solutions are included and some self-similar solutions on the light cone, are visualized.  
  \newline
\noindent \emph{Keywords}: Curvature Flow, Inverse  Curvature FLow,  Light Cone, self-similar solutions, soliton solutions, ancient solutions. 
\newline
\noindent \emph{Mathematics Subject Classification 2010}: 53C44, 53C50, 37E35.
\end{abstract}

\section{Introduction}

We consider the 3-dimensional Minkowski space as $\mathbb{R}_1^{3}=(\mathbb{R}^{3}, \langle, \rangle )$, where $\mathbb{R}^3$ is the 3-dimensional vector space and $\langle, \rangle$ is the Minkowski metric defined by $\langle u,v \rangle=-u_1v_1+u_2v_2+u_3v_3.$ We define the \it light cone \rm as the lightlike surface $Q^{2}:= \{p=(p_1,p_2,p_3) \in \R_{1}^{3}\setminus \{0\}:\spn{p,p}=0\}.$

Let $\displaystyle X:I\subset \R \rightarrow Q^{2}\subset \mathbb{R}_1^{3}$ be a spacelike curve parametrized by arc length $s$. The curve $X$ is characterized by a trihedrom $\{X(s),T(s),Y(s)\}$, $s \in I$, where $X^{\prime}(s)=T(s)$ is the unit tangent vector field and $Y(s)$ is the unique lightlike vector field orthogonal to $T(s)$,  such that $\spn{X(s),Y(s)}=1$. The {\it curvature} $k(s)$ of $X(s)$, at $s \in I$, is defined by
\begin{equation}\label{defk}
 T^{\prime}(s)=k(s)X(s)-Y(s).
 \end{equation}
 The curvature measures how much the curve $X$ bends from  a parable, whose curvature vanishes. When the curvature is a positive (resp. negative) constant function, then $X(s)$ is a hyperbole (resp. ellipse). Morover we say that $Y(s)$ is the \it associated curve \rm to $X(s)$. If $k(s)\neq 0$, $s \in I$, then $Y(s)$ is a spacelike curve and its  curvature is given by $\tilde{k}(s)=1/k(s)$.

The light cone has two connected components, namely $Q^{2}=Q^{2}_{+}\cup Q^{2}_{-}$, where $Q_{+}^{2}=\{p \in Q^{2}:p_1>0\}$ and $Q_{-}^{2}=\{p \in Q^{2}:p_1<0\}$.  Let $X(s)$ be a spacelike curve on $Q_{+}^{2}$, then it follows from $\spn{X(s),Y(s)}=1$ that $Y(s)$ is a curve on $Q_{-}^{2}$ , hence  $-Y(s) \in Q_{+}^{2}$ for each $s$. Without loss of generality, we will consider  only  $Q_{+}^{2}$, since the same results hold for $Q_{-}^{2}$.

A 1-parameter family of spacelike curves $\displaystyle \hat{X}: I\times J \rightarrow Q_{+}^{2}\subset \mathbb{R}_1^{3}$ is a \it solution to the curvature flow  \rm (CF) (resp. \it inverse curvature flow \rm (ICF)), with initial condition $X(u)$, $u\in I$, if  
\begin{equation}\label{cficf}
\left\{\begin{array}{ll}
\displaystyle{<\frac{\partial }{\partial t}\hat{X}^{t}(\cdot), \hat{Y}^t(\cdot)>}=\hat{k}^{t}(\cdot)\\\
\hat{X}^{0}(\cdot)=X(\cdot),
\end{array}\right. 
\qquad 
\mbox( resp. ) \left\{\begin{array}{ll}
\displaystyle{<\frac{\partial }{\partial t}\hat{X}^{t}(\cdot), \hat{Y}^t(\cdot)>}=-\frac{1}{\hat{k}^{t}(\cdot)}\\\
\hat{X}^{0}(\cdot)=X(\cdot),
\end{array}\right. 
\end{equation}
where $\hat{k}^{t}(\cdot)=\hat{k}(\cdot,t)$ is the curvature of $\hat{X}^{t}(\cdot)=\hat{X}(\cdot,t)$ and $\hat{Y}^{t}(\cdot)=\hat{Y}(\cdot,t)$ is the lightlike vector field associated to $\hat{X}^{t}(\cdot)$ for each $t\in J$. 
When $X(u)$ is a parable i.e. $k\equiv 0$, then the family $\hat{X}^{t}(u)=X(u)$, for all $t$,  is a {\it trivial solution} to the CF. 

The definition above was motivated by the curve shortening  flow  for curves on a 2-dimensional manifold $M^2$, where one considers the inner product
$<\partial/\partial t\;\hat{X}^{t}, \hat{N}^t>=\hat{k}^{t}$ 
where $N^t$ is the unit vector field normal to the curve.  This is a gradient type flow for the length functional. The curve shortening flow for curves on the 2-dimensional Euclidean space was studied by several authors in  \cite{Abresch}, \cite{Epstein1} 
-  \cite{Grayson1} and \cite{Halldorsson}. In particular,  the solutions that evolve by isometries and/or homotheties were investigated. These are  the so called { self-similar solutions} of the  curve shortening flow and they are called { solitons}, when the curve evolves  by isometries. The importance of such flows is due to the fact, that after partial results obtained by several authors,  Angenent \cite{Angenent} proved that for closed curves, under general  conditions, the curve shortening flow turns into a self-similar  solution and eventually collapses into a point. Halldorsson  \cite{Halldorsson} gave a complete description of the self-similar solutions for curves on the plane. 
An increased interest also appeared in investigating the flows for curves on the plane, when one replaces the curvature $k$  by a function of the curvature such as $ 1/k^\alpha$ in \cite{Urbas} and    \cite{Urbas1}, or $k^\alpha/\alpha $ studied in \cite{Andrews}. Considering 
the function $-1/k$,  Drugan et al. \cite{Drugan} investigated the curves on the plane that evolve by translations and Andrews \cite{Andrews} showed that the only simple closed curves on the plane that evolve by homotheties are circles.  There are a  few results in \cite{Gage2}, \cite{Grayson} and  \cite{Ma} 
 for curves on an ambient space wich  is not the  Euclidean plane.  Moreover, Dos Reis and Tenenblat \cite{DosReis} characterized and described all the soliton solutions of the curve shortening flow on the sphere and  Nunes da Silva and Tenenblat \cite{Silva} described all the soliton solutions of this flow, on the 2-dimensional hyperbolic space.   
    
Halldorsson \cite{Halldorsson1} in 2015, considered curves on the Minkowski plane and classified all the self similar solutions of the curvature flow. 
He also proved that this flow is not necessarily length decreasing. 
In this paper, we study curvature flows for curves on the light cone $Q^2_+\subset \R^3_1$. 

Considering curves whose curvature does not vanish, we show that studying the  solutions to the curvature flow (CF) on the light cone is equivalent to studying the solutions to the inverse curvature flow (ICF).
We prove, in Theorems \ref{c3t2} (resp. \ref{c3t3}), that a spacelike curve on the light cone is a self-similar 
  solution to the CF (resp. (ICF)) if, only if, its curvature function $k$ (resp. $1/k$, for $k\neq 0$) differs by a constant $c$ from being the inner product between its tangent vector field and a fixed vector $v\in \R^3_1\setminus\{0\}$. 
 
 We investigate the {\it self-similar solutions} to the flows i.e. the curves that evolve by isometries  and/or homotheties of $Q^2_+$. When the constant $c=0$, then the curve evolves only by isometries and it is called a {\it soliton solution}. We will show that  curves whose curvature is a non zero constant (ellipses and hyperboles) are the only curves that evolve by homotheties. The ellipses are the only closed ones and they are ancient solutions (see \cite{Bourni}), i.e. they evolve for time $t\in (-\infty, A)$.  
 
 We prove that the self-similar flows are characterized in terms of a system of ordinary differential equations, which admits initial condition on three disjoint sets.  We prove  existence results, which show that for each vector $v\in \R^3_1\setminus\{0\}$, there exists a 2-parameter family of self-similar solutions to the CF (and consequently to the ICF) on $Q^2_+$. There are three classes of such solutions which are associated to the type of the vector $v$. 
  Moreover, considering non trivial solutions to the CF we prove that the curvature may vanish at most on two points and therefore the corresponding solutions to the ICF may have at most three connected components. We study the behaviour of the curvature function at each end of the maximal interval of definition of the curve. 
  
  In what follows we state our main results. We start establishing the correspondence between the CF and the ICF on $Q_{+}^{2}$. 

\begin{remark}\label{c3t0}
	\rm Let $X: I \rightarrow Q_{+}^{2}$, $u \in I$, be a spacelike curve and let $Y(u)$ be the curve associated to $X$. Let $\hat{X}^{t}(u)=\hat{X}(u,t)$, $\hat{X}:I\times J \rightarrow Q_{+}^2$, $(u,t) \in I\times J$, $0 \in J$ be a 1-parameter family of curves with non vanishing curvature for all $t$  
and let $\hat{Y}^{t}(\cdot)=\hat{Y}(\cdot,t)$ be the associated curve to $\hat{X}^{t}(\cdot)$. For each $t$, $\hat{Y}^t$ is a curve on $Q^2_-$. Moreover, $<\hat{X}(u,t),\hat{Y}(u,t)>=1$. Taking the derivative with respect to $t$ and considering the curves on $Q^2_+$, we conclude that   $\hat{X}^{t}(\cdot)$ is a 
solution to the CF, with initial condition $X(u)$ 
if, and only if, $-\hat{Y}(\cdot,-t)$ is a solution to the 
ICF, with initial condition $-Y(u)$.
\end{remark}

We investigate solutions to the CF and the ICF that
  evolve by  homotheties and/or isometries of $Q_{+}^{2}$. 
We remark that an isometry of $Q_{+}^{2}$ is an element of the Lie group $O_1(3)$, acting on $\R^3_1$,  that preserves $Q_{+}^{2}$.  
	Let	$\displaystyle \hat{X}: I\times J \rightarrow Q_{+}^{2}$ be a solution to the CF (resp. ICF) on $Q_{+}^{2}$, with initial condition $\displaystyle X: I \rightarrow Q_{+}^{2}$.  
	The curve $X$ is  a  self-similar solution  to the CF (resp. ICF) if $\hat{X}^{t}(s)=f(t)M(t)X(s)$,  where $f(t)>0$ is a smooth map with $f(0)=1$  and $M(t)$, $t \in J$ is a family of   isometries of $Q_{+}^{2}$, such that $M(0)=Id$ is the identity map.  
	 If $f(t)\equiv 1$ for all $t \in J$, then $X$ is a  soliton solution to the CF (resp. ICF). Our next two theorems classify the solutions to the CF and ICF that evolve by homotheties on  $Q^{2}_{+}$. 

\begin{theorem}\label{c3t1}
	Let $X: I \rightarrow Q^{2}_{+}$ be a spacelike curve parametrized by arc length $s$ with curvature $k(s)\not\equiv 0$ and let $\hat{X}(s,t)=f(t)X(s), \, (s,t) \in I \times J$, $f(t)>0,\; f(0)=1$ be an evolution of $X$ by homotheties. The family $\hat{X}(s,t)$ is a solution to the CF if, and only if, the curvature $k$ of $X$  is constant, $f(t)=\sqrt{2kt+1}$ 
	and $\hat{k}(x,t)= k/(2kt+1)$. 
In particular
\begin{enumerate}
\item If $k<0$ then  $X$ is an ellipse of  $Q_+^2$, with curvature $k$ and it is an ancient solution, with 
$J=(-\infty,-\frac{1}{2k})$. At $t= -\frac{1}{2k}$, $\hat{X}^t$ collapses into the origin of $R^3_1$.   
\item If $k>0$ then $X$ is a hyperbole of $Q_{+}^{2}$, with curvature $k$, and $J=(-\frac{1}{2k},+\infty)$.  
\end{enumerate}
 \end{theorem}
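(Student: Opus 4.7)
The plan is to compute the geometric data of the rescaled curve $\hat X^t(s) = f(t)X(s)$ in terms of the data of $X$, plug into the CF equation, and separate variables.

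First, I would identify the arc-length parameter, unit tangent, associated lightlike curve, and curvature of $\hat X^t$. Since $X$ is parameterized by arc length, $\spn{\partial_s \hat X^t,\partial_s\hat X^t} = f(t)^2$, so the arc length of $\hat X^t$ is $\sigma = f(t)s$ and the unit tangent with respect to $\sigma$ equals $T(s)$. To find the associated lightlike curve $\hat Y^t$, I would use the normalization $\spn{\hat X^t,\hat Y^t} = 1$, together with the orthogonality to the tangent and the lightlike condition; these force $\hat Y^t = Y/f(t)$. Then, starting from $T'(s) = k(s)X(s) - Y(s)$ and using $\partial_\sigma = (1/f)\partial_s$, one obtains
\begin{equation*}
 \partial_\sigma \hat T^t \;=\; \frac{k(s)}{f(t)^2}\,\hat X^t - \hat Y^t,
\end{equation*}
so by \eqref{defk} the curvature of $\hat X^t$ is $\hat k^t = k(s)/f(t)^2$.

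Next, I would compute $\partial_t \hat X^t = f'(t) X(s)$, so $\spn{\partial_t \hat X^t,\hat Y^t} = f'(t)/f(t)$, and the CF equation reduces to
\begin{equation*}
 f(t)\,f'(t) \;=\; k(s), \qquad (s,t)\in I\times J.
\end{equation*}
The left-hand side depends only on $t$ and the right-hand side only on $s$, forcing both to equal a common constant $k$. Integrating $ff'=k$ with $f(0)=1$ gives $f(t) = \sqrt{2kt+1}$, valid exactly when $2kt+1>0$, and substituting back yields $\hat k^t = k/(2kt+1)$. The calculation is reversible, which gives sufficiency.

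For the geometric conclusions, I would invoke the classification stated right after \eqref{defk}: curves on $Q^2_+$ with positive (resp.\ negative) constant curvature are hyperboles (resp.\ ellipses). The constraint $2kt+1>0$ pins down the maximal interval $J$. When $k<0$, $J=(-\infty,-1/(2k))$ and $f(t)\to 0$ as $t\to -1/(2k)^{-}$, so $\hat X^t$ collapses to the origin of $\R^3_1$, and the solution is ancient; when $k>0$, $J=(-1/(2k),+\infty)$. The main subtlety, rather than a genuine obstacle, is correctly identifying $\hat Y^t$: because a homothety of $\R^3_1$ is \emph{not} an isometry, the scaling factor on the lightlike partner must be inferred from $\spn{\hat X^t,\hat Y^t}=1$ rather than simply inherited from $X$. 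Once this is done, the separation-of-variables step immediately forces both the constancy of $k$ and the ODE for $f$.
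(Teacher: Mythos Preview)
Your proposal is correct and follows essentially the same route as the paper: compute $\hat Y^t=Y/f(t)$ and $\hat k^t=k/f^2$, plug into the CF condition to get $f f'=k(s)$, then separate variables. The only cosmetic difference is that the paper packages the scaling identities for $\hat T^t$, $\hat Y^t$, $\hat k^t$ into a preliminary proposition (Proposition~\ref{c3p1}, stated for the more general self-similar evolution $f(t)M(t)X(s)$) and simply cites it, whereas you rederive them in place; the separation-of-variables step and the geometric conclusions are identical.
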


 One can see, from Theorem \ref{c3t1}, that the curvature flow on $Q_{+}^{2}$ is not always a curve shortening flow, as it occurs on the Minkowski plane (see \cite{Halldorsson1}). 
In fact, if $\hat{X}(s,t)=f(t)X(s)$, $f(t)>0,\, f(0)=1$, is a solution to the CF on $Q_{+}^2$, then the arc length of $X^{t}(s)$ is given by $h(t)=f(t)(s_1-s_0)$, for $s_0<s_1$, and $h^{\prime}(t)={k}/({2kt+1})\,(s_1-s_0)$. Therefore, when  $X(s)$ is an ellipse (resp. hyperbole) the arc length of $X^{t}(s)$ decreases (resp. increases) along  the flow.  

\begin{theorem}\label{c3t1I}
Let $X: I \rightarrow Q^{2}_{+}$ be a spacelike curve parametrized by arc length $s$ whose curvature  $k(s)$ does not vanish and let $\hat{X}(s,t)=f(t)X(s), \, (s,t) \in I \times J$, $f(t)>0,\; f(0)=1$ be an evolution of $X$ by homotheties.
The family $\hat{X}(s,t)$ satisfy the ICF if, and only if, 
the curvature $k$ of $X$ is constant, $ f(t)={1}/{\sqrt{{2}t/k+1}}$ 
and  $\hat{k}(s,t)=2t+k$.  In particular 
\begin{enumerate}
\item If $k<0$, then $X$ is an ellipse of $Q_+^2$, with curvature $k$. 
It is an ancient solution with  $J=\left(-\infty,-\frac{k}{2} \right)$, evolving from the origin of $\R^3_1$. 
\item  If $k>0$, then $X$ is a hyperbole of  $Q_+^2$, with curvature $k$,
and $ J=\left(-\frac{k}{2},+\infty \right)$.
\end{enumerate}
\end{theorem}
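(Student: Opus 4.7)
The plan is to compute how the geometric invariants of $\hat{X}^t = f(t)X$ transform under homothetic scaling, substitute into the ICF equation, and reduce to a separable ODE for $f$ from which the classification falls out. The first task is to establish the scaling laws. Since $\langle T,T\rangle = 1$, one has $|\partial_s \hat X(s,t)| = f(t)$, so the unit tangent of $\hat X^t$ at the parameter value $s$ coincides with $T(s)$. The defining conditions for the associated lightlike curve (lightlike, orthogonal to the unit tangent, and with $\langle \hat X^t,\hat Y^t\rangle = 1$) then force $\hat Y(s,t) = Y(s)/f(t)$. A chain-rule differentiation of the unit tangent with respect to the arc length of $\hat X^t$, combined with \eqref{defk} and the linear independence of $X(s)$ and $Y(s)$, yields the scaling $\hat k(s,t) = k(s)/f(t)^2$.

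Next I would substitute these expressions into $\langle \partial_t \hat X^t,\hat Y^t\rangle = -1/\hat k^t$; the inner product on the left collapses to $f'(t)/f(t)$, and after rearranging one obtains $f'(t)/f(t)^3 = -1/k(s)$. Since the two sides depend on disjoint variables, each must equal the same constant, which forces $k$ to be a nonzero constant and leaves a separable ODE for $f$. Integrating with $f(0)=1$ then produces the stated formula $f(t) = 1/\sqrt{2t/k+1}$, and back-substitution into the scaling law for $\hat k$ yields $\hat k(s,t) = 2t+k$.

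The final step is to read off the maximal interval $J$ from the positivity condition $2t/k + 1 > 0$ and to invoke the classification of constant-curvature curves on $Q_+^2$ recalled in the introduction: negative constant curvature gives an ellipse, and since $f(t)\to 0$ as $t\to-\infty$ the family emerges from the origin of $\R^3_1$, yielding item (1); positive constant curvature gives a hyperbole with $J=(-k/2,+\infty)$, yielding item (2). I expect the main obstacle to be the derivation of the scaling law for $\hat k$, since it requires correctly handling the arc-length reparametrization of $\hat X^t$ and then isolating the coefficients along $X(s)$ and $Y(s)$ in the Frenet-type formula; the remaining steps are routine separation of variables and interval analysis.
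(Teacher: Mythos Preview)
Your proposal is correct and follows essentially the same route as the paper: it omits the proof, stating it is analogous to that of Theorem~\ref{c3t1}, which computes $\hat Y^t=Y/f(t)$ and $\hat k^t=k/f^2(t)$ via Proposition~\ref{c3p1}, substitutes into the flow equation, and separates variables. Your ``main obstacle'' of deriving the scaling law for $\hat k$ is handled in the paper by Proposition~\ref{c3p1} (specifically the computation $\langle M(t)T',M(t)T'\rangle=-2k$ with $M=\mathrm{Id}$), so no new difficulty arises there.
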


\begin{corollary}\label{closed}
The ellipses are the only closed curves on the light cone that evolve by homotheties, along the curvature flow or the inverse curvature flow. They are ancient solutions  that collapse into $0\in \R^3_1$ along the CF and they evolve from $0\in \R^3_1$ along the ICF.
\end{corollary}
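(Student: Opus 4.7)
The plan is to deduce the corollary almost directly from Theorems \ref{c3t1} and \ref{c3t1I}, with only a small geometric observation left to verify. First I would split the class of curves evolving by homotheties into those with $k \not\equiv 0$ and those with $k \equiv 0$. The first case is exactly the hypothesis of the two theorems, so any such curve on $Q^2_+$ must have constant curvature and is therefore either an ellipse ($k<0$) or a hyperbole ($k>0$). The parable case ($k\equiv 0$) is the trivial solution $\hat{X}^t \equiv X$, corresponding to $f(t)\equiv 1$, and parables are not closed curves on $Q^2_+$, so they are irrelevant to the corollary.

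Next I would verify that, between the two constant-curvature families, only the ellipses are closed. This reduces to a short geometric remark: a curve on $Q^2_+$ with constant curvature is cut out by the intersection of the cone with an affine plane, and the resulting conic is a compact topological circle (an ellipse) precisely in the case $k<0$, and a non-compact branch (a hyperbole) in the case $k>0$. Combined with the first paragraph, this establishes that the only closed self-similar solutions evolving by homotheties under either the CF or the ICF are the ellipses.

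Finally, the statements about ancient solutions and collapse at the origin are direct readings of the explicit formulas in Theorems \ref{c3t1} and \ref{c3t1I}. For the CF with $k<0$, Theorem \ref{c3t1} gives $J=(-\infty,-1/(2k))$ and $f(t)=\sqrt{2kt+1}\to 0$ as $t\to -1/(2k)$, so $\hat{X}^t = f(t)X(s)$ shrinks to $0\in\R^3_1$ at the right endpoint. For the ICF with $k<0$, Theorem \ref{c3t1I} gives $J=(-\infty,-k/2)$ and $f(t)=1/\sqrt{2t/k+1}$, which tends to $0$ as $t\to -\infty$ (since $2t/k\to +\infty$), so the ellipse evolves outward from $0\in\R^3_1$. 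There is no real obstacle here; the only step requiring any care is the geometric classification of constant-curvature spacelike curves on $Q^2_+$ as ellipses versus hyperboles, which in turn follows from the definition of curvature in \eqref{defk} by integrating the Frenet-type system with constant $k$.
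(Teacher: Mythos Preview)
Your proposal is correct and matches the paper's approach: the corollary is stated without proof immediately after Theorems~\ref{c3t1} and~\ref{c3t1I}, as a direct consequence of those two results together with the classification of constant-curvature curves on $Q^2_+$ as ellipses ($k<0$) or hyperboles ($k>0$). Your explicit treatment of the parable case $k\equiv 0$ and the verification of the limiting behaviour of $f(t)$ at the endpoints of $J$ are sound and slightly more detailed than what the paper writes, but the argument is the intended one.
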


The following two results provide a characterization of the self-similar solutions to the CF and to the ICF.

\begin{theorem} \label{c3t2}
	Let $\displaystyle X: I \rightarrow Q^{2}_{+}$ be a spacelike curve parametrized by arc length $s\in I$. Then $X$ is a self-similar solution to the curvature flow on $Q_{+}^{2}$ if, and only if, there exist a vector $v \in \mathbb{R}^{3}_1\setminus \{0\}$ and a constant $c \in \R$ such that
	\begin{equation}\label{eq2}
	c+\langle T(s),v\rangle=k(s),
	\end{equation}
	where $T$ is the unit tangent vector field and $k$ is the curvature  of $X$. In particular, $X$ is a soliton solution to the CF whenever $c=0$.
\end{theorem}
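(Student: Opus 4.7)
The plan is to extract the desired relation from the infinitesimal behaviour of the self-similar family at $t=0$ and, conversely, to build an explicit evolution from the data $(c,v)$.

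\textbf{Forward direction.} I would start from $\hat{X}^t(s)=f(t)M(t)X(s)$ with $M(t)\in O_1(3)$, $f(0)=1$, $M(0)=\mathrm{Id}$. Since $M(t)$ preserves the Minkowski form and $\langle \hat{X}^t,\hat{Y}^t\rangle\equiv 1$ fixes $\hat{Y}^t$ uniquely, one gets $\hat{Y}^t(s)=f(t)^{-1}M(t)Y(s)$. Differentiating in $t$ at $t=0$ and substituting into the CF equation yields
\[
f'(0)+\langle M'(0)X(s),\,Y(s)\rangle = k(s).
\]
The decisive algebraic step will be to parametrize $M'(0)\in \mathfrak{o}_1(3)$ by a vector of $\R^3_1$: with the Lorentzian cross product defined by $\langle u\times_L w,\,z\rangle=\det(u,w,z)$, every skew-adjoint operator takes the unique form $u\mapsto v\times_L u$. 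Cyclic invariance gives $\langle v\times_L X,Y\rangle=\det(v,X,Y)=\langle X\times_L Y,\,v\rangle$; since $X\times_L Y$ is Minkowski-orthogonal to both lightlike $X$ and $Y$ it is proportional to $T$, and an orientation check on a concrete parable yields $X\times_L Y=T$. Hence $k(s)=f'(0)+\langle T(s),v\rangle$, and $c:=f'(0)$ completes the case $v\neq 0$. If $v=0$, the relation forces $k$ constant; by Theorem \ref{c3t1} (or a direct argument when $k\equiv 0$) the curve lies in an affine plane $\{\langle x,n\rangle=d\}$ with $n\neq 0$, so $\langle T,n\rangle=0$ and $(v,c)=(n,k)$ satisfies the claim with a non-zero $v$.

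\textbf{Backward direction.} Given $v\in \R^3_1\setminus\{0\}$ and $c\in \R$ with $k(s)=c+\langle T(s),v\rangle$, I would set $f(t)=\sqrt{2ct+1}$ on the interval where $2ct+1>0$, define $A(t)u=(v/f(t)^2)\times_L u\in \mathfrak{o}_1(3)$, and take $M(t)$ to be the unique solution in $O_1(3)$ of $M'(t)=M(t)A(t)$, $M(0)=\mathrm{Id}$. For $\hat{X}^t=f(t)M(t)X$, repeating the computation of the forward direction gives
\[
\Bigl\langle \partial_t \hat{X}^t,\,\hat{Y}^t\Bigr\rangle = \frac{f'(t)}{f(t)}+\Bigl\langle \tfrac{v}{f(t)^2},\,T(s)\Bigr\rangle, \qquad \hat{k}^t(s)=\frac{k(s)}{f(t)^2},
\]
so the CF equation reduces to $f\,f'=c$ together with the hypothesis on $k$, both true by construction. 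The case $c=0$ forces $f\equiv 1$, yielding the soliton statement.

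\textbf{Main obstacle.} The heart of the argument is the algebraic identification $\langle M'(0)X,Y\rangle=\langle T,v\rangle$ via the Lorentzian cross product together with the cone-specific identity $X\times_L Y=T$; this is what converts the abstract action of a skew-adjoint operator into the intrinsic inner product $\langle T,v\rangle$ appearing in the statement. A secondary subtlety is the degenerate case $M'(0)=0$ (equivalently $k$ constant), where producing a nontrivial $v$ requires the planarity of constant-curvature spacelike curves on $Q^2_+$.
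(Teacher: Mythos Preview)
Your argument is correct and follows the same overall strategy as the paper: differentiate the self-similar evolution at $t=0$, identify $\langle M'(0)X,Y\rangle$ with $\langle T,v\rangle$ via the identity $X\times Y=T$, and for the converse take $f(t)=\sqrt{2ct+1}$ together with a one-parameter family in $O_1(3)$ whose generator encodes $v$. The differences are presentational rather than structural. Where the paper fixes an explicit basis $A_1,A_2,A_3$ of $\mathfrak{o}_1(3)$ and computes $\langle M'(0)X,Y\rangle$ coordinate by coordinate, you invoke the Lorentzian cross-product parametrization $u\mapsto v\times_L u$ of $\mathfrak{o}_1(3)$ and the cyclic identity $\langle v\times_L X,Y\rangle=\langle X\times_L Y,v\rangle$, which is cleaner and coordinate-free. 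In the converse, the paper normalizes $v$ up to isometry to one of three model vectors and writes down the corresponding isometry group $M_i(t)$ explicitly; your version avoids the case split by letting $M(t)$ be the solution of $M'=M A(t)$ with $A(t)=f(t)^{-2}\,v\times_L(\cdot)$, which works uniformly for all $v$. Finally, you explicitly treat the degenerate situation $M'(0)=0$ (constant curvature, hence planar) to produce a nonzero $v$; the paper does not do this inside the proof but covers it in the remark immediately following the theorem.
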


\begin{theorem} \label{c3t3}
	Let $\displaystyle X: I \rightarrow Q^{2}_{+}$ be a spacelike curve parametrized by arc length $s$, such that $k(s)\neq0$ for all $s \in I$. Then $X$ is a self-similar solution to the inverse  
	curvature flow on $Q_{+}^{2}$ if, and only if, there exist a 
	vector $v \in \mathbb{R}^{3}_1\setminus \{0\}$ and $c \in \R$ such that
	\begin{equation}\label{eq2n}
	c+\langle T(s),v\rangle=\frac{1}{k(s)},
	\end{equation}
	where $T$ is the unit tangent vector field and $k$ is the curvature  of $X$. In particular, $X$ is a soliton solution to the ICF whenever $c=0$. 
\end{theorem}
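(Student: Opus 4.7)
The plan is to mirror the (anticipated) argument for Theorem~\ref{c3t2}, replacing the right-hand side $\hat k^t$ of the flow equation by $-1/\hat k^t$. The first step is to record how a transformation $X\mapsto fMX$ (with $M\in O_1(3)$ preserving $Q_{+}^{2}$ and $f>0$) acts on the trihedron: arc length scales by $f$, so the tangent becomes $MT$; the associated null curve becomes $f^{-1}MY$ (forced by $\langle fMX,f^{-1}MY\rangle=1$); and the curvature becomes $k/f^2$. Substituting the self-similar ansatz $\hat X^t(s)=f(t)M(t)X(s)$ into the ICF equation and evaluating at $t=0$ collapses the flow to the pointwise identity
\begin{equation*}
f'(0)+\langle AX(s),Y(s)\rangle \;=\; -\frac{1}{k(s)},\qquad A:=M'(0)\in\mathfrak{o}_{1}(3).
\end{equation*}

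The next step is to convert $\langle AX,Y\rangle$ into an inner product with $T$. Using the Lie-algebra isomorphism $\mathfrak{o}_{1}(3)\cong\R^{3}_{1}$, $A\leftrightarrow v_0$, defined by $Au=v_0\times_1 u$ with $\langle v_0\times_1 u,w\rangle=\det(v_0,u,w)$, one has $\langle AX,Y\rangle=\det(v_0,X,Y)$. Expanding $v_0$ in the basis $\{X,T,Y\}$ and noting that its Gram matrix has determinant $-1$ (so $\det(T,X,Y)$ equals a constant $\epsilon\in\{\pm 1\}$ along $X$), only the $T$-component survives and $\langle AX,Y\rangle=\epsilon\langle v_0,T\rangle$. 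Setting $c=-f'(0)$ and $v=-\epsilon v_0$ yields the required identity $1/k=c+\langle T,v\rangle$. The delicate subcase is $v_0=0$: the identity then forces $k$ constant, so by Theorem~\ref{c3t1I} the curve $X$ is an ellipse or hyperbole and lies in a plane, and a nonzero normal $v$ of that plane together with $c=1/k$ realises the conclusion with $v\ne 0$.

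For the converse, given $v\ne 0$ and $c$ with $1/k(s)=c+\langle T(s),v\rangle$, I construct the flow explicitly. Let $A\in\mathfrak{o}_{1}(3)$ be the skew-adjoint operator associated via the hat map to $-\epsilon v$, set $f(t)=(2ct+1)^{-1/2}$ on the largest interval about $0$ where $2ct+1>0$ (or $f\equiv 1$ if $c=0$), and put $M(t)=\exp(g(t)A)$ with $g(t)=\int_0^t f(\tau)^2\,d\tau$. Since $M(t)$ is an isometry that commutes with $A$, the ICF condition at arbitrary $t$ reduces, after the same kind of computation as above, to
\begin{equation*}
\frac{f'(t)}{f(t)}-f(t)^{2}\langle T(s),v\rangle \;=\; -cf(t)^{2}-f(t)^{2}\langle T(s),v\rangle,
\end{equation*}
that is, to the single ODE $f'=-cf^3$, which the chosen $f$ satisfies. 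The main obstacle I anticipate is precisely this verification: ensuring the construction solves the ICF for \emph{all} admissible $t$, not merely infinitesimally at $t=0$. The crucial point is the commutation $[M(t),A]=0$, which causes the $s$-dependent term $f(t)^2\langle T(s),v\rangle$ on both sides to cancel identically, leaving only a scalar ODE for $f$.
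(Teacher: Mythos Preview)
Your proposal is correct and follows essentially the same strategy as the paper's (omitted) proof, which is declared to be analogous to that of Theorem~\ref{c3t2}: evaluate the self-similar ansatz at $t=0$ to extract $v$ and $c$ from $M'(0)$ and $f'(0)$, and for the converse build $f$ and $M$ explicitly and verify the ICF equation.

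The differences are purely presentational. Where the paper computes $\langle M'(0)X,Y\rangle$ by expanding $M'(0)$ in an explicit basis $A_1,A_2,A_3$ of $\mathfrak{o}_1(3)$ and then invoking $X\times Y=T$, you use the hat-map isomorphism $\mathfrak{o}_1(3)\cong\R^3_1$ and a determinant expansion in the frame $\{X,T,Y\}$; these are equivalent computations. For the converse, the paper normalises $v$ to one of three types and writes down three explicit one-parameter groups $M_i(t)$ together with $f$ and $\varphi$, whereas you package all three cases uniformly as $M(t)=\exp\bigl(g(t)A\bigr)$ and reduce the verification to the scalar ODE $f'=-cf^3$, whose solution $f(t)=(2ct+1)^{-1/2}$ is exactly the ICF analogue of the paper's $f(t)=\sqrt{2ct+1}$. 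Your observation that $[M(t),A]=0$ is what makes this uniform treatment work, and is implicit in the paper's case-by-case identity $\langle M_i'(t)X,M_i(t)Y\rangle=\varphi_i'(t)\langle T,w_i\rangle$. Finally, you explicitly treat the degenerate case $M'(0)=0$ (constant curvature, hence a planar conic admitting a normal $v\neq 0$), which the paper handles only in the remark following Theorem~\ref{c3t3}; this is a nice point of rigour on your part.
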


Observe that a parable, an ellipse or a hyperbole is a  self-similar solution to the CF satisfying also \eqref{eq2}. In fact, considering  the tangent vector field $T$, since it is a planar curve, there exits a vector $v\in{\R^3_1}\setminus \{0\}$ such that $\displaystyle \langle T,v\rangle=0$, hence \eqref{eq2} is satisfied for the constant $c=k$. Similarly, by considering $c=1/k$, an ellipse or a hyperbole satisfy \eqref{eq2n} hence it is a self-similar solution to the ICF. These curves will be called {\it trivial solutions} to the CF and to the ICF.

As a consequence of the characterizations given by Theorems \ref{c3t2} and \ref{c3t3}, we can show that obtaining self-similar solutions to the CF and 
to the ICF correspond to obtaining solutions to  systems of ODEs (see \eqref{s2},\; 
 \eqref{s3} and   Propositions \ref{c4p1},\;\ref{c4p2} and \ref{propinvers}). 
A long sequence of lemmas, providing properties of the solutions of these systems, will prove the  following existence theorem. 
\begin{theorem}\label{c3t4}
	For any $v \in \mathbb{R}^{3}_1\setminus\{0\}$ and $c \in \R$, there is a 2-parameter family of curves $X$, which are non-trivial self-similar solutions (soliton solutions when $c=0$) to the CF,  on the 2-dimensional light cone $Q^2_+$.  There are three classes of such solutions corresponding to each type of the vector $v$.  Moreover, the curvature function of $X$ has at most two zeros and 
 at each end, the curvature function is either unbounded or it tends to one of the following constants $\{c,0\}$. 
Each curve $-Y$ associated to $X$ is a self-similar solution to the ICF and it has at most three connected components. 
\end{theorem}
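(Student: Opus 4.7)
By Theorem \ref{c3t2}, finding a non-trivial self-similar solution to the CF on $Q^2_+$ for fixed $v\in\R^3_1\setminus\{0\}$ and $c\in\R$ is equivalent to producing a spacelike arc-length parametrized curve $X:I\to Q^2_+$ whose curvature satisfies $k(s)=c+\langle T(s),v\rangle$. Using $X'=T$ and the Frenet-type relations on the cone, namely $T'=kX-Y$ and (differentiating $\langle X,Y\rangle=1$, $\langle T,Y\rangle=0$, $\langle Y,Y\rangle=0$) $Y'=-kT$, I would introduce the scalars
\begin{equation*}
a(s)=\langle X(s),v\rangle,\qquad b(s)=\langle T(s),v\rangle,\qquad d(s)=\langle Y(s),v\rangle,
\end{equation*}
so that $k=c+b$ and the condition on $X$ is encoded in the autonomous system
\begin{equation*}
a'=b,\qquad b'=(c+b)a-d,\qquad d'=-(c+b)b.
\end{equation*}
A direct computation shows that $\Phi:=b^2+2ad$ is a first integral, and expanding $v=dX+bT+aY$ in the cone frame gives $\Phi=\langle v,v\rangle$. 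Thus the admissible trajectories live on the $2$-dimensional surface $\mathcal{S}_v=\{b^2+2ad=\langle v,v\rangle\}$, which accounts for the asserted two parameters.

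Next I would separate the three classes according to the causal type of $v$. Up to an isometry of $Q^2_+$ (i.e.\ an element of $O_1(3)$ preserving $Q^2_+$), one may normalize $v$ so that $\langle v,v\rangle$ equals $-1$, $+1$ or $0$, producing three distinct geometric families; this normalization is exactly the content of the propositions that Theorem \ref{c3t4} refers to (\ref{c4p1}, \ref{c4p2}, \ref{propinvers}) and it has the added benefit that the reconstruction of $X$ from $(a,b,d)$ becomes explicit. For existence, I pick any point of $\mathcal{S}_v$ as initial data; standard ODE theory yields a unique maximal smooth solution, and a Bonnet-type uniqueness statement for spacelike curves on $Q^2_+$ (the curvature $k=c+b$ determines $X$ up to an isometry of $Q^2_+$) recovers the curve, giving one self-similar solution per $\mathcal{S}_v$-orbit and hence a 2-parameter family.

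To control the zeros of $k$ and the behaviour of $k$ at the ends of the maximal interval, I would analyse the phase portrait on $\mathcal{S}_v$ relative to the curve $\{b=-c\}\cap\mathcal{S}_v$ (which parametrizes the zeros of $k$). Combining $b'=ka-d$ along $\{b=-c\}$ with the identity $c^2+2ad=\langle v,v\rangle$ one gets that $b'$ is determined by $a$ alone on the zero-curvature set, and a monotonicity/convexity argument (using that the derivative of $k$ has controlled sign transitions between consecutive zeros, coupled with the constraint) yields that the trajectory can cross $\{b=-c\}$ at most twice. For the asymptotic behaviour one studies the equilibria of the planar reduced system on $\mathcal{S}_v$: these are precisely the points where $b=0$ or $b=-c$ with $a,d$ determined by the constraint; a case analysis depending on the type of $v$ shows that if the maximal interval $(\alpha,\omega)$ is finite then $|k|$ is unbounded at the corresponding end, while if it is infinite then the trajectory tends to one of the equilibria, forcing $k\to 0$ or $k\to c$. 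Finally, the ICF part follows at once from Remark \ref{c3t0}: the associated curve $-Y$ inherits a self-similar evolution under the ICF, and its connected components correspond to the maximal subintervals on which $k\neq 0$; since $k$ has at most two zeros, $-Y$ has at most three connected components.

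\textbf{Main obstacle.} The core difficulty is the phase-plane study on the surface $\mathcal{S}_v$: both proving the sharp bound of two zeros for $k$ and identifying the precise limit behaviour of $k$ at the ends require splitting into the three causal cases for $v$ and, inside each, subcases depending on the sign of $c$ and on the location of initial data relative to the equilibria. This is the step where the long sequence of lemmas alluded to in the statement of Theorem \ref{c3t4} becomes unavoidable.
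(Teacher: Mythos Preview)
Your plan is essentially the paper's own approach: the scalars $(a,b,d)$ are exactly the paper's $(\alpha,\tau,\eta)$ (up to the normalization $v=a\,e_i$), the autonomous system is the system \eqref{s2}, the first integral $b^2+2ad=\langle v,v\rangle$ is the constraint that cuts out the surfaces $H,C,S$ in \eqref{h}, and the reconstruction step is Proposition~\ref{c4p2}. The structure ``existence via ODE on a $2$-surface $\Rightarrow$ two-parameter family, then phase-plane analysis for zeros and end behaviour, then Remark~\ref{c3t0} for the ICF part'' is precisely what the paper does.

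There is, however, one genuine inaccuracy in your asymptotic sketch. You write that on an infinite end the trajectory ``tends to one of the equilibria, forcing $k\to 0$ or $k\to c$''. For $c\neq 0$ the only equilibria of your system on $\mathcal S_v$ are the points with $b=0$, $d=ca$, and at those $k=c+b=c$; there is no equilibrium with $k=0$. In the paper the limit $k\to 0$ (equivalently $b\to -c$) arises instead from \emph{unbounded} trajectories: when $|a(s)|\to\infty$ one shows (Lemma~\ref{g3}) that either $|d|,|k|\to\infty$ or $d\to 0$ and $k\to 0$. So the dichotomy you need is not ``finite end $\Rightarrow$ $|k|$ unbounded'' versus ``infinite end $\Rightarrow$ equilibrium'', but rather ``$a$ bounded on that end $\Rightarrow$ $\tau\to 0$ (Lemma~\ref{g2}), hence $k\to c$'' versus ``$a$ unbounded $\Rightarrow$ $k$ unbounded or $k\to 0$''. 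Getting this split right is exactly where the long chain of lemmas (\ref{ln5}, \ref{ln6}, \ref{c41}, \ref{g2}, \ref{g3}) is used, and it also feeds back into the two-zero bound for $k$ (Lemma~\ref{ln4}\,iii), whose proof in the $c>0$, $\psi(0)\in S$ case is more delicate than your one-line monotonicity sketch suggests.
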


The proofs of the theorems stated above are given in Section 2, where we also provide explicit soliton solutions.  
In Section 3, some self-similar solutions to the CF and to the ICF, on the light cone, are visualized.

\section{Proof of the main results}
 We start by  providing some properties 
of a self-similar evolution of a curve on $Q^2_+$. In order to do so, we need  
 the expression of the curvature function $k$ of a curve  $X$ on the light cone,  parametrized by an arbitrary parameter. The curvature is given by 
 (see \cite{Liu}) 
\begin{equation}\label{c1e5}
k= \frac{( \spn{X',X''})^{2}-\spn{X',X'}\spn{X'',X''}}
{2(\spn{X'X'})^3}.
\end{equation}
 
\begin{proposition} \label{c3p1}
	Let $X: I \rightarrow Q^{2}_{+}$ be a spacelike curve parametrized by arc length $s$, with curvature $k(s)$ and associated curve $Y(s)$. Let $\hat{X}(s,t)=f(t)M(t)X(s), \, (s,t) \in I \times J$ be a self-similar evolution of $X$. Then, for each $t \in J$,
	\begin{equation}\label{TYkhat}
	\hat{T}^{t}(s)=M(t)T(s), \hspace{0.5 cm} \hat{Y}^{t}(s)=\frac{1}{f(t)}M(t)Y(s)\hspace{0.5 cm},  \hspace{0.5cm}\hat{k}^{t}(s)=\frac{k(s)}{f^2(t)},
	\end{equation}
	 where $\hat{T}^{t}(s)$ is the unit vector field tangent to $\hat{X}^{t}(s)$, $\hat{Y}^{t}(s)$ is the associated curve to $\hat{X}^{t}(s)$ and $\hat{k}^{t}(s)$ is the curvature of $\hat{X}^{t}(s)$.
\end{proposition}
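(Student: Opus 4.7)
The proof is essentially a direct computation exploiting two facts: the isometries $M(t)\in O_1(3)$ preserve the Minkowski inner product $\langle\,,\rangle$, and differentiation in $s$ commutes with the linear operator $f(t)M(t)$, which is constant in $s$. The mild subtlety is that while $s$ is the arc length parameter for $X$, it is \emph{not} arc length for $\hat X^t$ (in fact the new arc length element is $f(t)\,ds$, as we will see), so one should work with the parameter-invariant formula \eqref{c1e5} and the intrinsic characterization of $\hat Y^t$ rather than relying on the arc-length formula \eqref{defk}.

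First I would compute $\partial_s\hat{X}^t(s)=f(t)M(t)T(s)$ and observe that, since $M(t)\in O_1(3)$ and $\langle T,T\rangle=1$, one has $\langle\partial_s\hat X^t,\partial_s\hat X^t\rangle=f^2(t)$. Hence the unit tangent obtained by normalization is $\hat T^t(s)=M(t)T(s)$, which also shows that the arc length element of $\hat X^t$ is $f(t)\,ds$.

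Next I would verify the formula for $\hat Y^t$ by checking the three defining conditions for the associated curve. Set $\widetilde Y^t(s):=\frac{1}{f(t)}M(t)Y(s)$. Using the invariance of $\langle\,,\rangle$ under $M(t)$: \emph{(i)} $\langle\widetilde Y^t,\widetilde Y^t\rangle=\frac{1}{f^2(t)}\langle Y,Y\rangle=0$, so $\widetilde Y^t$ is lightlike; \emph{(ii)} $\langle\widetilde Y^t,\hat T^t\rangle=\frac{1}{f(t)}\langle Y,T\rangle=0$; \emph{(iii)} $\langle\hat X^t,\widetilde Y^t\rangle=\langle X,Y\rangle=1$. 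By the uniqueness of the lightlike vector field satisfying these three properties, $\hat Y^t=\widetilde Y^t$.

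For the curvature, I would apply formula \eqref{c1e5} to $\hat X^t(s)=f(t)M(t)X(s)$ in the parameter $s$ (noting again that $s$ is not arc length for $\hat X^t$, so the general formula is needed). Every pairwise inner product $\langle\partial_s^a\hat X^t,\partial_s^b\hat X^t\rangle$ with $a,b\in\{1,2\}$ equals $f^2(t)\langle\partial_s^aX,\partial_s^bX\rangle$, so the numerator of \eqref{c1e5} acquires a factor $f^4(t)$ and the denominator acquires a factor $f^6(t)$; the $M(t)$ cancels because of orthogonality. Simplifying and using the arc-length parametrization of $X$ (so that $\langle X',X'\rangle=1$) yields $\hat k^t(s)=k(s)/f^2(t)$.

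There is no real obstacle here; the only thing to be careful about is to use the parameter-invariant expression \eqref{c1e5} (rather than \eqref{defk}) when computing $\hat k^t$, since $s$ is not the arc length of $\hat X^t$, and to invoke the uniqueness of $\hat Y^t$ to conclude in the second step instead of trying to derive $\hat Y^t$ via a differential identity.
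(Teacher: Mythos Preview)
Your proof is correct and follows essentially the same approach as the paper: normalize $\partial_s\hat X^t$ to get $\hat T^t$, verify the three defining properties of the associated curve to identify $\hat Y^t$ via uniqueness, and compute $\hat k^t$ from the parameter-invariant formula \eqref{c1e5}. The paper's only cosmetic difference is that it simplifies the curvature step by noting $\langle T',T'\rangle=-2k$ (from \eqref{defk}) rather than tracking the $f^4/f^6$ scaling explicitly.
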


\begin{proof}
	It follows from the expression of $\hat{X}$  that  
	$\hat{T}^{t}(s)=M(t)T(s)$. Moreover, considering $\hat{Y}(s)=M(t)Y(s)/f(t)$, then  
		$\spn{\hat{Y}^{t}(s),\hat{Y}^{t}(s)}=0$, 
		$\spn{\hat{Y}^{t}(s),\hat{T}^{t}(s)}=0$ and  
		$\spn{\hat{Y}^{t}(s),\hat{X}^{t}(s)}=1$. 	
 Hence $ \hat{Y}^{t}(s)$ is  the curve associated to $\displaystyle \hat{X}^{t}(s)$. We conclude from  (\ref{c1e5}) that  
	\[ 
		\hat{k}^{t}(s)
		=-\frac{\spn{M(t)T^{\prime}(s),M(t)T^{\prime}(s)}}{2f^{2}(t)}=	
\frac{k(s)}{f^2(t)},
\]		
where in the last equality we used  $\left\spn{M(t)T^{\prime}(s),M(t)T^{\prime}(s) \right}=-2k(s)$, which follows  from \eqref{defk}.  
\end{proof}	
  
Let $X: I \rightarrow Q^{2}_{+}$ be a spacelike curve parametrized by arc length $s$ such that $k(s)\neq 0$ for all $s$. It follows from Remark  \ref{c3t0} and Proposition \ref{c3p1} that $\hat{X}(s,t)=f(t)M(t)X(s)$ satisfy the CF in $Q_{+}^{2}$ if, and only if, $\displaystyle -\hat{Y}(s,-t)=-\frac{1}{f(-t)}M(-t)Y(s)$ satisfy the ICF in $Q_{+}^{2}$ i.e. $X$ is a self-similar solution to the CF if, and only if, $-Y(s)$ is a self-similar solution to the ICF. In this context, it is sufficient to study the behaviour  of the self-similar solution to the CF.

\begin{proof}[\bf Proof of Theorem \ref{c3t1}\rm]
 Consider $\hat{X}^{t}(s)=f(t)X(s)$, $(s,t) \in I\times J$, where  $f(t)>0$ and $f(0)=1$. Then Proposition \ref{c3p1} implies that $ \hat{Y}^{t}(s)=Y(s)/f(t)$ and $ \hat{k}^{t}(s)=k(s)/f^2(t)$. 
If $\hat{X}$ is a solution to the CF then  
\begin{equation*}
\frac{k(s)}{f^2(t)}=\left\spn{\frac{\partial}{\partial t}\hat{X}^{t}(s),\hat{Y}^{t}(s)\right}=\frac{f^{\prime}(t)}{f(t)}.
\end{equation*}
Therefore, $ f^{\prime}(t)f(t)=k(s)$ for all $(s,t)$ and hence $k$ is constant and $f(t)=\sqrt{2kt+1}$, since  $f(0)=1$. Hence,  the curve is an ellipse (resp. hyperbole) if $k<0$ (resp. $k>0$) and the domain $J$ is determined by the function $f$ and the sign of $k$. Moreover, since $f(-k/2)=0$, it follows that $X^t$ collapses into the origin of $\mathbb{R}^3_1$. 

Conversely, if $k$ is constant, by considering $f(t)=\sqrt{2kt+1}$ 
and $\hat{X}^{t}(s)=f(t)X(s)$, a straightforward computation using \eqref{TYkhat} shows that $\hat{X}$ is a solution to the CF.    

\end{proof}

The proof of Theorem \ref{c3t1I} is similar to the proof of Theorem \ref{c3t1} and it will be omitted.

\begin{proof}[\bf Proof of Theorem \ref{c3t2}\rm]
	Let $\hat{X}(s,t)=f(t)M(t)X(s)$ be a self-similar evolution of $X$ that satisfy the CF. Taking the derivative of $\hat{X}$ with respect to $t$, it  
	follows from \eqref{TYkhat} that 
	\begin{equation*}
		\frac{k(s)}{{f^{2}(t)}} =\hat{k}(s,t)=\left\spn{\frac{\partial}{\partial t}\hat{X}(s,t),\hat{Y}(s,t)\right}
		=\frac{f^{\prime}(t)}{f(t)}+\left\spn{M^{\prime}(t)X(s),M(t)Y(s)\right}. 
	\end{equation*}
	Therefore, $\displaystyle	k(s)=f(t)f^{\prime}(t)+{f^{2}(t)}\left\spn{M^{\prime}(t)X(s),M(t)Y(s)\right}$ 	for each $(s,t) \in I\times J$  and  hence at $t=0$
	\begin{equation*}
	k(s)=f^{\prime}(0)+\spn{M^{\prime}(0)X(s),Y(s)}.
	\end{equation*}
	$M^{\prime}(0)$ is an element of the Lie algebra $\mathfrak{o}_1(3)$ of the Lie group $O_1(3)$. We consider a basis  of $\mathfrak{o}_1(3)$ 
	\begin{eqnarray*}
		A_1=\left(\begin{array}{ccc}
			0 & 0 & 0  \\
			0 & 0 &  1\\
			0 & -1 &  0
		\end{array}\right),\,\,\,A_2= \left(\begin{array}{ccc}
			0 & 0 & 1  \\
			0 & 0 &  1\\
			1 & -1 &  0
		\end{array}\right)\,\,\,\, \text{and} \,\,\,\,\, 
		A_3=\left(\begin{array}{ccc}
			0 & 1 & 0  \\
			1 & 0 &  0\\
			0 & 0 &  0
		\end{array}\right). 
	\end{eqnarray*}
Then  $M^{\prime}(0)=a_1A_1+a_2A_2+a_3A_3$. Denoting  $X(s)=(x_1(s),x_2(s),x_3(s))$, we have 
	\begin{eqnarray*}
		M^{\prime}(0)X(s)=\left(a_3x_{2}(s)+a_2x_{3}(s),\,  a_3x_{1}(s)+(a_1+a_2)x_{3}(s),\,	a_2x_{1}(s)-(a_1+a_2)x_{2}(s)\right).
	\end{eqnarray*}
	Since $X(s)\times Y(s)=T(s)$, it follows that 
	$$\spn{M^{\prime}(0)X(s),Y(s)}=\spn{X(s)\times Y(s), (-a_1-a_2,-a_2,a_3)}=\spn{T(s),v}. $$
 Therefore, taking $v=(-a_1-a_2,-a_2,a_3)$ and $f^{\prime}(0)=c$, we have $k(s)=c+\spn{T(s),v}$.

	Conversely, let $X$ be a spacelike curve on $Q_{+}^{2}\subset \R_{1}^{3}$ parametrized by arc length $s$ such that $k(s)=c+\spn{T(s),v}$ for a vector $v \in \R^{3}_1\setminus\{0\}$ and $c \in \R$. Without loss of generality, up to isometries of $Q_{+}^{2}$, we can consider $v$ to be a multiple of $e_1=(1,0,0)$ if $v$ is a timelike vector, a multiple of $e_2=(1,1,0)$ if $v$ is a lightlike vector and a multiple of $e_3=(0,0,1)$ if $v$ is a spacelike vector. Thus, depending on the type of the vector $v$, the curvature  $k_i(s)=c+\langle T(s),v_i\rangle$ where $v_i=a\,e_i$, $a>0$ and $i=1,2,3$. 
	Now, we define the evolution of $X$ in $Q_{+}^{2}$ to be $\hat{X}_{i}(s,t)=f(t)M_{i}(t)X(s)$, where 
	\begin{eqnarray*} 
	M_{1}(t)&=&\left(\begin{array}{lll}
	1 & 0                    &                   0  \\
	0 & \cos(\varphi(t)) &  -\sin(\varphi(t))\\
	0 & \sin(\varphi(t)) &  \cos(\varphi(t))
	\end{array}\right),\,\,\nonumber
	M_{2}(t)=\left(\begin{array}{ccc}
	1+\frac{(\varphi(t))^{2}}{2} & -\frac{(\varphi(t))^{2}}{2}  &   -\varphi(t)  \\
	\frac{(\varphi(t))^{2}}{2} & 1-\frac{(\varphi(t))^{2}}{2}  &   -\varphi(t)  \\
	-\varphi(t)                    & \varphi(t)                     &   1 
	\end{array}\right),\label{1.1}\\
	M_{3}(t)&=&\left(\begin{array}{lll}
	\cosh(\varphi(t)) & \sinh(\varphi(t))  &   0  \\
	\sinh(\varphi(t)) & \cosh(\varphi(t))  &   0  \\
	0                    & 0                     &   1 
	\end{array}\right),\, \,\,\varphi(t)=\left\{\begin{array}{ll}
	\displaystyle	\frac{a}{c}\log(f(t)),\,\, \text{if}\,\,\, c \neq0,\\
	at,\,\,\text{if}\,\, c=0,\end{array}\right.
	\end{eqnarray*}
	and $f(t)=\sqrt{2ct+1}$. Taking the derivative with respect to $t$, it follows from \eqref{TYkhat} that 
	\begin{eqnarray*}
		\left\spn{\frac{\partial}{\partial t}\hat{X}_{i}(s,t),\hat{Y}_{i}(s,t)\right}&=&\frac{f^{\prime}(t)}{f(t)}+\left\spn{M^{\prime}_{i}(t)X(s),M_{i}(t)Y(s)\right}.
	\end{eqnarray*}	
	A straightforward computation shows that $\displaystyle \left\langle M^{\prime}_{i}(t)X(s),M_{i}(t)Y(s)\right\rangle=\varphi^{\prime}_i(t)\langle X(s)\times Y(s),w_{i}\rangle$. Thus
	\begin{eqnarray*}
		\left\spn{\frac{\partial}{\partial t}\hat{X}_{i}(s,t),\hat{Y}_{i}(s,t)\right}=\frac{f^{\prime}(t)}{f(t)}+ \varphi^{\prime}_{i}(t)\spn{T(s),w_{i}}
	\end{eqnarray*}
	for each $i=1,2,3$. If $c\neq 0$, then
	\begin{eqnarray*}
		\left\spn{\frac{\partial}{\partial t}\hat{X}_{i}(s,t),\hat{Y}_{i}(s,t)\right}
		=\frac{f^{\prime}(t)}{cf(t)}\left(c+a\spn{T(s),w_{i}}\right)
		=\frac{k_{i}(s)}{{f^{2}(t)}}).
	\end{eqnarray*}
It follows from \eqref{TYkhat} that $\displaystyle \left\spn{\frac{\partial}{\partial t}\hat{X}_{i}(s,t),\hat{Y}_{i}(s,t)\right}=\frac{k_{i}(s)}{{f^{2}(t)}})=\hat{k_{i}}(s,t)$. 
	
	When $c=0$, $k_{i}(s)=\spn{T(s),v_{i}}$, $f(t)\equiv 1$ and $\varphi_{i}(t)=at$. Hence the evolution is composed only by isometries and
	\begin{equation*}
		\left\spn{\frac{\partial}{\partial t}\hat{X}_{i}(s,t),\hat{Y}_{i}(s,t)\right}
		=a\spn{T(s),w_{i}}
		=k_{i}(s)
		=\hat{k}_{i}(s,t), 
	\end{equation*}
where the last equality follows from fact that isometries preserve curvature.

\end{proof}

The proof of  Theorem \ref{c3t3} is analogue to the proof of Theorem \ref{c3t2} and it will be omitted. It follows from Theorem \ref{c3t2} that the investigation  of the self-similar solutions to the CF in $Q_{+}^{2}$ reduces to studying the curves that satisfy equation (\ref{eq2}) for some vector $v \in \R^{3}_{1}\setminus \{0\}$ and $c \in \R$. Up to isometries of $Q_{+}^{2}$ and depending on the type of vector $v$,  we consider $v$ as being $v_{i}=ae_{i}$, where $a>0$, $e_1=(1,0,0)$, $e_2=(1,1,0)$ or 
$e_3=(0,0,1)$.
 Our next result characterizes \eqref{eq2} in terms of a system of differential equations.

\begin{proposition} \label{c4p1}
	Let $X: I \rightarrow Q^{2}_{+}$ be a spacelike curve parametrized by arc length $s$. Consider the vectors
	\begin{equation} \label{e_i}
	e_{1}=(1,0,0),\hspace{0,3 cm} e_{2}=(1,1,0), \hspace{0,3 cm} \text{e}\hspace{0,3 cm} e_{3}=(0,0,1).	
	\end{equation}
	For each $i \in \{1,2,3\}$, define the functions 
\begin{equation}\label{alphai}	
	\alpha_{i}(s)=\spn{X(s),e_{i}}, \qquad \tau_{i}(s)=\spn{T(s),e_{i}}, \qquad  \eta_{i}(s)=\spn{Y(s),e_{i}},
\end{equation}	
	 where $T$ is the unit tangent vector and $Y$ is the vector field associated to $X$. For fixed $a>0$ and $c \in \R$,
	\begin{equation*}
	k_{i}(s)=c+a\tau_{i}(s)
	\end{equation*}
	is satisfied for all $s \in I$, i.e. $X$ is a self-similar solution to the CF,  if and only if, the functions $\alpha_{i}(s)$, $\tau_{i}(s)$ and $\eta_{i}(s)$ satisfy the system
	\begin{equation}\label{c4e1}
	\left\{\begin{array}{lll}
	\alpha^{\prime}_{i}(s)=\tau_{i}(s),\\
	\tau^{\prime}_{i}(s)=[c+a\tau_{i}(s)]\alpha_{i}(s)-\eta_{i}(s),\\
	\eta^{\prime}_{i}(s)=-[c+a\tau_{i}(s)]\tau_{i}(s),
	\end{array}\right.
	\end{equation}
	with initial conditions $(\alpha_{i}(0),\tau_{i}(0),\eta_{i}(0))\neq (0,0,0)$ satisfying  
	\begin{equation}\label{e_i2}
	2\alpha_{i}(0)\eta_{i}(0)+\tau^{2}_{i}(0)=
	\left\{\begin{array}{lll}
	-1, \hspace{.3 cm} \text{se} \hspace{.3 cm} i=1,\\
	0, \hspace{.3 cm} \text{se} \hspace{.3 cm} i=2,\\
	1, \hspace{.3 cm} \text{se} \hspace{.3 cm} i=3.
	\end{array}\right.
	\end{equation}
For such a solution,  $2\alpha_i(s)\eta_i(s)+\tau^{2}_i(s)=-1$ (resp. 
$0$ or $1$) when $i=1$ (resp. $2$ or $3$), for all $s$.
\end{proposition}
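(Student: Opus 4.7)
The plan is to convert the structure equations of the trihedron $\{X,T,Y\}$ on $Q^2_+$ into a linear system for the scalar components of $e_i$ in that trihedron, and then read off what the hypothesis $k=c+a\tau_i$ says inside that system. The only ingredient not already displayed in the excerpt is the derivative of the associated curve. I would compute it by writing $Y'(s)=AX+BT+CY$ and using the Gram relations $\spn{X,Y}=1$, $\spn{T,T}=1$, with all other pairings among $X,T,Y$ equal to zero. Differentiating the identities $\spn{Y,X}=1$, $\spn{Y,T}=0$ and $\spn{Y,Y}=0$ and using $T'=kX-Y$ from \eqref{defk} gives $\spn{Y',X}=-\spn{Y,T}=0$, $\spn{Y',T}=-\spn{Y,kX-Y}=-k$ and $\spn{Y',Y}=0$, so $A=C=0$ and $B=-k$, that is, $Y'(s)=-k(s)\,T(s)$.

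With $X'=T$, $T'=kX-Y$ and $Y'=-kT$ in hand, differentiating $\alpha_i=\spn{X,e_i}$, $\tau_i=\spn{T,e_i}$ and $\eta_i=\spn{Y,e_i}$ yields the \emph{general} system $\alpha_i'=\tau_i$, $\tau_i'=k\alpha_i-\eta_i$, $\eta_i'=-k\tau_i$, valid for every curve on the light cone. Substituting the hypothesis $k=c+a\tau_i$ reproduces exactly \eqref{c4e1}, which settles the forward direction. For the converse, subtracting \eqref{c4e1} from the general system gives $(k-c-a\tau_i)\alpha_i=0$ and $(k-c-a\tau_i)\tau_i=0$; hence $k(s)=c+a\tau_i(s)$ wherever $(\alpha_i,\tau_i)\neq(0,0)$, and continuity together with the conservation law below propagates the equality to all of $I$.

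For the algebraic part, I would expand $e_i$ in $\{X,T,Y\}$. Since the only nonzero Gram entries are $\spn{X,Y}=1$ and $\spn{T,T}=1$, matching inner products against the trihedron gives $e_i=\eta_i X+\tau_i T+\alpha_i Y$, and consequently
\[
\spn{e_i,e_i}=\tau_i^2+2\alpha_i\eta_i.
\]
The left-hand side is the constant $\spn{e_i,e_i}$, which equals $-1,0,1$ for $i=1,2,3$ by direct computation on $e_1=(1,0,0)$, $e_2=(1,1,0)$, $e_3=(0,0,1)$; this gives both the initial-value condition \eqref{e_i2} and its preservation for all $s$. The triple $(\alpha_i,\tau_i,\eta_i)$ can vanish only if $e_i=0$, which is ruled out.

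I do not expect any real obstacle; the one subtlety is the converse on a set where $\alpha_i$ and $\tau_i$ vanish simultaneously. The invariant $\tau_i^2+2\alpha_i\eta_i\in\{-1,0,1\}$ excludes this outright for $i=1,3$. For $i=2$ it would force $e_2=\eta_2 X$ on an open subinterval, whence $T=\eta_2'\,e_2$ and $\spn{T,T}=(\eta_2')^2\spn{e_2,e_2}=0$, contradicting $\spn{T,T}=1$; so $(\alpha_2,\tau_2)\neq(0,0)$ on an open dense set and continuity closes the argument.
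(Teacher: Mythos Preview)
Your argument is correct and matches the paper's: derive the Frenet-type system $X'=T$, $T'=kX-Y$, $Y'=-kT$, pair with $e_i$ to get the general system $\alpha_i'=\tau_i$, $\tau_i'=k\alpha_i-\eta_i$, $\eta_i'=-k\tau_i$, compare with \eqref{c4e1}, and use $e_i=\eta_iX+\tau_iT+\alpha_iY$ to obtain the conserved quantity $2\alpha_i\eta_i+\tau_i^2=\spn{e_i,e_i}$. For the converse the paper avoids your case split by arguing uniformly that $\alpha_i\equiv\tau_i\equiv 0$ on a subinterval forces $\tau_i'=0$ and hence $\eta_i=0$, contradicting $e_i\neq 0$; this also sidesteps the small slip in your $i=2$ computation (differentiating $e_2=\eta_2 X$ gives $T=-(\eta_2'/\eta_2^{2})\,e_2$, not $T=\eta_2'\,e_2$, though your conclusion $\spn{T,T}=0$ is unaffected).
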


\begin{proof}
	The vector fields $X$, $T$ and $Y$ satisfy the following equations (see \cite{Liu})
	\begin{equation}\label{c4e3}
	\left\{\begin{array}{lll}
	X^{\prime}(s)=T(s),\\
	T^{\prime}(s)=k(s)X(s)-Y(s),\\
	Y^{\prime}(s)=-k(s)T(s),
	\end{array}\right.
	\end{equation}
	for all $s \in I$. Thus, taking the inner product with $e_i$, we obtain 
	\begin{equation}
	\left\{\begin{array}{lll}\label{c4e2}
	\alpha^{\prime}_{i}(s)=\tau_{i}(s),\\
	\tau^{\prime}_{i}(s)=k_{i}(s)\alpha_{i}(s)-\eta_{i}(s),\\
	\eta^{\prime}_{i}(s)=-k_{i}(s)\tau_{i}(s).
	\end{array}\right.
	\end{equation}
Assume that $k_{i}(s)=c+a\tau_{i}(s)$ for all $s \in I$, then  \eqref{c4e1} is satisfied. Moreover, since $\{X(s),T(s),Y(s)\}$ is a basis for $\R_{1}^{3}$ for each $s$, it follows from \eqref{alphai} that  $e_i=\eta_i(s)X(s)+\tau_i(s)T(s)+\alpha_i(s)Y(s)$ i.e. $\spn{e_i,e_i}=2\alpha_{i}(s)\eta_{i}(s)+\tau^{2}_{i}(s)$ for all $s \in I$. In particular, at $s=0$ we have that \eqref{e_i2} holds.
	
	Conversely, suppose that the functions $\alpha_{i}(s)$, $\tau_{i}(s)$ and $\eta_{i}(s)$ satisfying \eqref{c4e1} and \eqref{e_i2} for each $i \in \{1,2,3\}$. Since \eqref{c4e2} holds, then $\left[c+a\tau_{i}(s)-k_{i}(s)\right]\alpha_{i}(s)=0$ and 
	$\left[c+a\tau_{i}(s)-k_{i}(s)\right]\tau_{i}(s)=0$, 
	for all $s \in I$. Assume by contradiction that $c+a\tau_{i}(s)-k_{i}(s)\neq 0$ on some interval $J \subset I$. Then $\alpha_{i}(s)=\spn{X(s),e_{i}}=0$, $\tau_{i}(s)=\spn{T(s),e_{i}}=0$ for all $s \in J$  and hence  $\spn{T'(s),e_{i}}=\spn{Y(s),e_i}=0$, which is a contradiction since $e_i\neq 0$. Therefore, $k_{i}(s)=c+a\tau_{i}(s)$ for all $s \in I$ and  $i \in \{1,2,3\}$.
\end{proof}
Our next proposition shows that a solution of the  system \eqref{c4e1}, with initial conditions satisfying \eqref{e_i2}, provides a self-similar solution to the CF in $Q_{+}^{2}$. 

\begin{proposition} \label{c4p2}
	Given a solution $\alpha(s), \tau(s), \eta(s): I\rightarrow \R$ of the system \eqref{c4e1} on some interval $J$ for fixed $a>0$, $c \in \R$ and initial conditions $(\alpha(0),\tau(0),\eta(0))\neq (0,0,0)$ satisfying $2\alpha(0)\eta(0)+\tau^{2}(0)=-1$ (resp. $0$ and $1$), there exists  smooth  spacelike curve $X: I \rightarrow Q_{+}^{2}$, parametrized by arc length $s$,  which is a self-similar solution to the CF, with curvature $k(s)=c+a\tau(s)$,  such that its tangent and associated vector fields $T$ and $Y$ satisfy
	\begin{equation}\label{c4e5}
	\alpha(s)=\spn{X(s),e},\hspace{.5 cm}\tau(s)=\spn{T(s),e} \hspace{.5 cm} \text{\rm e} \hspace{.5 cm} \eta(s)=\spn{Y(s),e}, 
	\end{equation} 
	where $e=(1,0,0)$ (resp. $e=(1,1,0)$ \rm and \it $e=(0,0,1)$). 
\end{proposition}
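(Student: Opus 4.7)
The plan is to reverse-engineer the construction: given the ODE solution $(\alpha, \tau, \eta)$, I would first build a null frame at $s = 0$ whose inner products with $e$ match the prescribed initial data, then integrate the frame equations \eqref{c4e3} with $k(s) := c + a\tau(s)$ to obtain $X$, and finally identify the original triple with $(\langle X, e\rangle, \langle T, e\rangle, \langle Y, e\rangle)$ by a uniqueness argument for linear ODEs. Theorem \ref{c3t2} then closes the loop.

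For the initial frame I would exhibit vectors $X_{0} \in Q^{2}_{+}$, $T_{0}$, $Y_{0}$ satisfying the standard null-frame relations $\langle X_{0}, X_{0}\rangle = \langle Y_{0}, Y_{0}\rangle = 0$, $\langle T_{0}, T_{0}\rangle = 1$, $\langle X_{0}, Y_{0}\rangle = 1$, $\langle X_{0}, T_{0}\rangle = \langle T_{0}, Y_{0}\rangle = 0$, together with $\langle X_{0}, e\rangle = \alpha(0)$, $\langle T_{0}, e\rangle = \tau(0)$, $\langle Y_{0}, e\rangle = \eta(0)$. Expanding $e = \eta(0)X_{0} + \tau(0)T_{0} + \alpha(0)Y_{0}$ in such a frame, the self-consistency relation $\langle e, e\rangle = 2\alpha(0)\eta(0) + \tau^{2}(0)$ is exactly \eqref{e_i2}, so $e$ has the type prescribed in each of the three cases. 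Starting from any reference null frame based in $Q^{2}_{+}$ and applying an isometry of $\R^{3}_{1}$ that carries the resulting image of $e$ to the prescribed $e_{i}$ produces the desired frame.

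With the initial frame in hand, I treat $k(s) := c + a\tau(s)$ as a known smooth function of $s$ and solve the linear system \eqref{c4e3} on $J$ with initial condition $(X_{0}, T_{0}, Y_{0})$. To check that the null-frame relations propagate, I would differentiate the six inner products among $X, T, Y$; a direct computation shows they obey a closed linear homogeneous ODE system for which the tuple $(\langle X, X\rangle, \langle T, T\rangle, \langle Y, Y\rangle, \langle X, T\rangle, \langle X, Y\rangle, \langle T, Y\rangle) = (0, 1, 0, 0, 1, 0)$ is a stationary point. Uniqueness then fixes these inner products throughout $J$, so $X(s) \in Q^{2}$, $T$ is unit spacelike, and $Y$ is the associated lightlike vector field; since $\langle X, Y\rangle = 1$ forbids $X(s) = 0$, $X$ stays in the connected component $Q^{2}_{+}$ by continuity. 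By \eqref{defk}, the curvature of $X$ is $k(s) = c + a\tau(s)$.

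Finally, setting $\tilde\alpha := \langle X, e\rangle$, $\tilde\tau := \langle T, e\rangle$, $\tilde\eta := \langle Y, e\rangle$ and pairing \eqref{c4e3} with $e$ yields a linear ODE in $(\tilde\alpha, \tilde\tau, \tilde\eta)$ whose coefficients depend on $s$ only through the known function $\tau(s)$; the original triple $(\alpha, \tau, \eta)$ satisfies this very same system by \eqref{c4e1} and matches the initial data by construction, so uniqueness yields \eqref{c4e5}. Then $k(s) = c + a\langle T(s), e\rangle = c + \langle T(s), v\rangle$ with $v = ae$, and Theorem \ref{c3t2} confirms that $X$ is a self-similar solution to the CF. The main obstacle in the whole argument is the initial-frame step: the condition $X_{0} \in Q^{2}_{+}$ imposes sign restrictions on $\alpha(0)$ and $\eta(0)$ (for example $\alpha(0) < 0$ when $i = 1$), and one must carefully exploit the isotropy of $e_{i}$ inside $O_{1}(3)$ to land $X_{0}$ in the correct connected component; the subsequent ODE integration and uniqueness arguments are then routine.
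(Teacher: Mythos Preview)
Your proposal is correct and follows essentially the same strategy as the paper: build a curve on $Q_{+}^{2}$ with curvature $k(s)=c+a\tau(s)$, arrange the initial frame so that $e=\eta(0)X_{0}+\tau(0)T_{0}+\alpha(0)Y_{0}$, and then identify $(\alpha,\tau,\eta)$ with the inner products $(\langle X,e\rangle,\langle T,e\rangle,\langle Y,e\rangle)$. The paper packages the last two steps slightly more economically: instead of pairing \eqref{c4e3} with $e$ and invoking ODE uniqueness, it observes directly that \eqref{c4e1} and \eqref{c4e3} force $\frac{d}{ds}\bigl(\eta(s)X(s)+\tau(s)T(s)+\alpha(s)Y(s)\bigr)=0$, so this vector equals $e$ for all $s$, which immediately yields \eqref{c4e5}; and it appeals to the fundamental theorem for curves on the light cone rather than explicitly verifying that the Gram matrix of the frame is preserved.
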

\begin{proof} Given a solution of \eqref{c4e1}, 
	define $k(s)=c+a\tau(s)$. Up to isometries of $Q_{+}^{2}$, there exists a unique curve $\displaystyle X: I \rightarrow Q_{+}^{2}$, whose curvature is $k(s)$ i.e. $X(s)$ and its tangent and associated vector fields $T(s)$ and $Y(s)$ satisfy \eqref{c4e3}.  The curve $X(s)$ is uniquely determined by the initial conditions $X(0)$, $T(0)$ and $Y(0)$, that can be chosen such that $\eta(0)X(0)+\tau(0)T(0)+\alpha(0)Y(0)=e,$ where $e=(1,0,0)$ (resp. $e=(1,1,0)$ and $e=(0,0,1)$).    A straightforward computation shows that \eqref{c4e1} and \eqref{c4e3} imply
	$	\frac{d}{ds}\left(\eta(s)X(s)+\tau(s)T(s)+\alpha(s)Y(s)\right)=0$ for all $s$. Therefore, (\ref{c4e5}) is satisfied and  Theorem \ref{c3t2} implies that $X$ is a self-similar solution to the CF.
\end{proof}
 Propositions \ref{c4p1} and \ref{c4p2}, show that investigating the  self-similar solutions to the CF on $Q_{+}^{2}$  is equivalent to studying the  solutions $\psi(s)=(\alpha(s),\tau(s), \eta(s))$ of the system 
\begin{equation}\label{s2}
\left\{\begin{array}{lll}
\alpha^{\prime}(s)=\tau(s),\\
\tau^{\prime}(s)=[c+a\tau(s)]\alpha(s)-\eta(s),\\
\eta^{\prime}(s)=-[c+a\tau(s)]\tau(s),
\end{array}\right.
\end{equation}
for given constants $c \in \R$, $a>0$ and initial condition $\psi(0) \in H\cup C\cup S$, where
\begin{equation}\label{h} \begin{array}{l}
H:=\{(\alpha,\tau,\eta) \in \R^{3}: 2\alpha\eta+\tau^{2}=-1\,\, \text{and} \,\, \alpha<0\},\\	
C:=\{(\alpha,\tau,\eta) \in \R^{3}\setminus \{0\}: 2\alpha\eta+\tau^{2}=0\,\, \text{and} \,\,\alpha \leq 0\}, \\
S:=\{(\alpha,\tau,\eta) \in \R^{3}: 2\alpha\eta+\tau^{2}=1\}. 
\end{array}	
\end{equation}
These are disjoint sets and if the initial condition $\psi(0) \in H$ (resp. $C$ or $S$) then the solution $\psi(s)$,  defined on the maximal interval $I$, will be contained in $H$ (resp. $C$ or $S$).

\begin{remark}\label{ob1} \rm
	Let $\displaystyle X: I \rightarrow Q_{+}^{2}$ be a spacelike curve parametrized by arc length $s$ given by $X(s)=(x_1(s),x_2(s), x_3(s))$ and let $Y(s)=(y_1(s),y_2(s),y_3(s))$ be the vector field  associated to $X(s)$. The function $\alpha(s)$ defined by \eqref{c4e5} has the following geometric interpretation.
	\begin{itemize}
		\item If $e=(1,0,0)$ (timelike vector), then $\alpha(s)=-x_1(s)<0$ and $\eta(s)=-y_{1}(s)>0$ for all $s \in I$. Moreover, $\alpha(s)$ is the Euclidean height function with respect to the vector $(-1,0,0).$
		\item If $e=(1,1,0)$ (lightlike vector), then $\alpha(s)=-x_1(s)+x_2(s)\leq 0$ and $\eta(s)=-y_1(s)+y_2(s)\geq 0$ for all $s \in I$. Moreover, $\alpha(s)/\sqrt{2}$ is the Euclidean height function with respect to the vector $(-1/\sqrt{2},1/\sqrt{2},0).$
		\item If $e=(0,0,1)$ (spacelike vector), then $\alpha(s)=x_3(s)$ for all $s \in I$. Moreover, $\alpha(s)$ is the Euclidean height function (with sign) with respect to the vector $(0,0,1).$
	\end{itemize}
\end{remark}

We can prove results similar to  Propositions \ref{c4p1} and \ref{c4p2} for the ICF. In fact, using the same  arguments, we have the following

\begin{proposition}\label{propinvers} A spacelike curve parametrized by  arc length $s$, 
$\tilde{X}:I \rightarrow Q_{+}^{2}$, $s \in I$, whose curvature $\tilde{k}(s)\neq 0$, 
is a self-similar solution to the ICF, i.e. $c+\spn{\tilde{T},v}=1/\tilde{k}$ for a vector $v\in\R^3_1\setminus\{0\}$ and $c\in \R$ if, and only if, $\tilde{X}$   
is determined by a solution $\tilde{\psi}(s)=(\tilde{\alpha}(s),\tilde{\tau}(s), \tilde{\eta}(s))$ defined on the maximal interval $I$ of the system
\begin{equation}\label{s3}
\left\{\begin{array}{lll}
\displaystyle \tilde{\alpha}^{\prime}(s)=\tilde{\tau}(s),\\
\displaystyle \tilde{\tau}^{\prime}(s)=\frac{\tilde{\alpha}(s)}{c+a\tilde{\tau}(s)}-\tilde{\eta}(s),\\
\displaystyle \tilde{\eta}^{\prime}(s)=-\frac{\tilde{\tau}(s)}{c+a\tilde{\tau}(s)},
\end{array}\right.
\end{equation}
for a  constant $a>0$ and initial condition $\psi(0) \in H\cup C\cup S$, where $\tilde{\alpha}(s)=\spn{\tilde{X}(s),e}$, $\tilde{\tau}(s)=\spn{\tilde{T}(s),e}$, $\tilde{\eta}(s)=\spn{\tilde{T}(s),e}$ , $e \in \{(1,0,0), (1,1,0), (0,0,1)\}$ and $v=ae$.
\end{proposition}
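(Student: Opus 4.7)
The strategy is to mirror the two–step argument of Propositions \ref{c4p1} and \ref{c4p2}, invoking Theorem \ref{c3t3} in place of Theorem \ref{c3t2}. The only genuine novelty is that the ICF identity $c+\langle\tilde{T},v\rangle=1/\tilde{k}$ must be inverted, which introduces the denominator $c+a\tilde{\tau}(s)$ in the ODE system and forces the maximal interval $I$ to lie in the open set where this denominator is nonzero (equivalently, where $\tilde{k}\neq 0$, which is part of the hypothesis).

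For the forward implication, assume $\tilde{X}$ is a self-similar solution to the ICF. By Theorem \ref{c3t3} there exist $v\in\R^3_1\setminus\{0\}$ and $c\in\R$ with $1/\tilde{k}(s)=c+\langle\tilde{T}(s),v\rangle$. Up to an isometry of $Q^2_+$ I may write $v=ae$ with $a>0$ and $e\in\{e_1,e_2,e_3\}$ as in \eqref{e_i}. Define $\tilde{\alpha}(s)=\langle\tilde{X}(s),e\rangle$, $\tilde{\tau}(s)=\langle\tilde{T}(s),e\rangle$, $\tilde{\eta}(s)=\langle\tilde{Y}(s),e\rangle$. Taking the inner product of the Frenet-type equations \eqref{c4e3} with $e$ produces a system of the same shape as \eqref{c4e2} but with $\tilde{k}$ in place of $k$; substituting $\tilde{k}(s)=1/(c+a\tilde{\tau}(s))$ gives exactly \eqref{s3}. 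Since $\{\tilde{X}(s),\tilde{T}(s),\tilde{Y}(s)\}$ is a basis of $\R^3_1$, expanding $e=\tilde{\eta}(s)\tilde{X}(s)+\tilde{\tau}(s)\tilde{T}(s)+\tilde{\alpha}(s)\tilde{Y}(s)$ yields $\langle e,e\rangle=2\tilde{\alpha}(s)\tilde{\eta}(s)+\tilde{\tau}^2(s)$, so the initial datum $\tilde{\psi}(0)$ lands in $H$, $C$, or $S$ according as $e$ is timelike, lightlike, or spacelike.

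For the converse, start from a solution $\tilde{\psi}(s)=(\tilde{\alpha}(s),\tilde{\tau}(s),\tilde{\eta}(s))$ of \eqref{s3} on its maximal interval $I$ with $\tilde{\psi}(0)\in H\cup C\cup S$; on $I$ the function $c+a\tilde{\tau}(s)$ is nowhere zero, so $\tilde{k}(s):=1/(c+a\tilde{\tau}(s))$ is smooth. By the fundamental theorem for spacelike curves on $Q^2_+$ there is a unique curve $\tilde{X}:I\to Q^2_+$ with curvature $\tilde{k}$, whose initial frame $\tilde{X}(0),\tilde{T}(0),\tilde{Y}(0)$ can be chosen so that
\[
\tilde{\eta}(0)\tilde{X}(0)+\tilde{\tau}(0)\tilde{T}(0)+\tilde{\alpha}(0)\tilde{Y}(0)=e.
\]
Differentiating $\tilde{\eta}(s)\tilde{X}(s)+\tilde{\tau}(s)\tilde{T}(s)+\tilde{\alpha}(s)\tilde{Y}(s)$ and using \eqref{c4e3} for the frame together with \eqref{s3} for the coefficients, every term cancels, so the combination is constantly equal to $e$. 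Taking inner products with $e$ then identifies $\tilde{\alpha},\tilde{\tau},\tilde{\eta}$ with $\langle\tilde{X},e\rangle,\langle\tilde{T},e\rangle,\langle\tilde{Y},e\rangle$, and by construction $1/\tilde{k}(s)=c+a\tilde{\tau}(s)=c+\langle\tilde{T}(s),v\rangle$, so Theorem \ref{c3t3} certifies $\tilde{X}$ as a self-similar solution to the ICF.

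The principal obstacle, and the only substantive departure from the proofs of Propositions \ref{c4p1} and \ref{c4p2}, is precisely the denominator $c+a\tilde{\tau}(s)$ in \eqref{s3}: this makes \eqref{s3} a singular ODE system rather than a polynomial one, so the maximal interval $I$ is controlled not only by blow-up of $\tilde{\psi}$ but also by zeros of $c+a\tilde{\tau}$. Once one accepts that these zeros correspond exactly to the forbidden values $\tilde{k}=0$ (excluded by hypothesis), the argument goes through verbatim as in the CF case.
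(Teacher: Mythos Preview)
Your proposal is correct and follows exactly the approach indicated in the paper: the authors state that Proposition \ref{propinvers} is obtained ``using the same arguments'' as Propositions \ref{c4p1} and \ref{c4p2}, and your write-up is precisely that adaptation, with $\tilde{k}=1/(c+a\tilde{\tau})$ substituted into the Frenet system \eqref{c4e3}. Your remark about the denominator $c+a\tilde{\tau}(s)$ controlling the maximal interval (and corresponding to the hypothesis $\tilde{k}\neq 0$) is the one point requiring comment beyond the CF case, and you have handled it appropriately.
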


From now on, unless explicitly stated, we will restrict ourselves to studying the self-similar solutions to the CF on $Q^2_+$.  The proof of  Theorem \ref{c3t4} will follow from a long series of lemmas on the properties of the 
solutions of  the system of differential equations \eqref{s2}.  Our first lemma will be used repeatedly and it  provides two relations  between the functions  $\alpha(s)$, $\tau(s)$ and $\eta(s)$, which depend on the initial condition.  
\begin{lemma}\label{novo}
	Let $\psi(s)=(\alpha(s),\tau(s),\eta(s))$, $s \in I \subset \R$ be a solution of \eqref{s2}, $a>0$, $c \in \R$ and initial condition $\psi(0) \in H \cup C \cup S$. Then for all $s \in I$
	\begin{equation}\label{novo1}
	[c+a\tau(s)]\alpha^{2}(s)-\alpha(s)\tau^{\prime}(s)+\frac{\tau^{2}(s)}{2}=\displaystyle \left\{\begin{array}{ccc}
	\displaystyle	-\frac{1}{2} & \text{if} & \psi(0) \in H,\\
	\displaystyle	0 & \text{if} &\psi(0) \in C,\\
	\displaystyle	\frac{1}{2} & \text{if} &\psi(0) \in S		
	\end{array}\right.
	\end{equation}
	and \begin{equation}\label{novo2}
	\eta^{2}(s)+\eta(s)\tau^{\prime}(s)+\frac{[c+a\tau(s)]\tau^{2}(s)}{2}=\displaystyle{\left\{\begin{array}{lll}
		\displaystyle			-\,\frac{[c+a\tau(s)]}{2} & \text{if}& \psi(0) \in H,\\
		0 & \text{if} &\psi(0) \in C,\\
		\displaystyle		\frac{[c+a\tau(s)]}{2} & \text{if} &\psi(0) \in S.		
		\end{array}\right.}
	\end{equation}
\end{lemma}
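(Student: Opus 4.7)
The plan is to observe that both identities are, after a single algebraic substitution, nothing more than the conservation of the quantity $2\alpha(s)\eta(s)+\tau^{2}(s)$ along solutions of \eqref{s2}. This conservation was already established in the proof of Proposition \ref{c4p1} (since $\{X,T,Y\}$ is a basis and $e_i = \eta X + \tau T + \alpha Y$ with $\langle e_i,e_i\rangle$ constant), and it can also be verified directly from \eqref{s2} by differentiating: $(2\alpha\eta+\tau^2)' = 2\tau\eta + 2\alpha(-[c+a\tau]\tau) + 2\tau([c+a\tau]\alpha - \eta) = 0$. So the value of $2\alpha\eta+\tau^2$ on the maximal interval $I$ equals its value at $s=0$, namely $-1$, $0$, or $1$ according to whether $\psi(0)\in H$, $C$, or $S$.

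To prove \eqref{novo1}, I would substitute the second equation of \eqref{s2}, namely $\tau'(s) = [c+a\tau(s)]\alpha(s) - \eta(s)$, into the left-hand side:
\begin{equation*}
[c+a\tau]\alpha^{2} - \alpha\tau' + \tfrac{\tau^2}{2}
= [c+a\tau]\alpha^{2} - \alpha\bigl([c+a\tau]\alpha - \eta\bigr) + \tfrac{\tau^2}{2}
= \alpha\eta + \tfrac{\tau^2}{2} = \tfrac{1}{2}\bigl(2\alpha\eta + \tau^{2}\bigr).
\end{equation*}
Using the conserved value of $2\alpha\eta+\tau^2$ on each of the three sets gives exactly the three cases of \eqref{novo1}.

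For \eqref{novo2}, the computation is analogous: substituting the same expression for $\tau'$,
\begin{equation*}
\eta^{2} + \eta\tau' + \tfrac{[c+a\tau]\tau^{2}}{2}
= \eta^{2} + \eta\bigl([c+a\tau]\alpha - \eta\bigr) + \tfrac{[c+a\tau]\tau^{2}}{2}
= \tfrac{[c+a\tau]}{2}\bigl(2\alpha\eta + \tau^{2}\bigr),
\end{equation*}
which again yields the three cases on $H$, $C$, $S$.

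There is essentially no obstacle: the lemma is a direct algebraic consequence of \eqref{s2} together with the first integral $2\alpha\eta+\tau^{2}=\mathrm{const}$. The only thing worth noting carefully is that one must verify the invariance of $2\alpha\eta+\tau^2$ before invoking it (either by the geometric argument from Proposition \ref{c4p1} or by the direct differentiation above), since that is the sole ingredient beyond the defining system.
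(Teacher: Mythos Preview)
Your proof is correct and follows essentially the same approach as the paper: both use the second equation of \eqref{s2} together with the conserved quantity $2\alpha\eta+\tau^{2}$ to obtain the two identities. The paper phrases this as ``multiply the second equation of \eqref{s2} by $\alpha$ (resp.\ $\eta$)'' rather than ``substitute $\tau'$ into the left-hand side,'' but these are the same manipulation; your explicit verification that $(2\alpha\eta+\tau^{2})'=0$ is a welcome addition.
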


\begin{proof}
	Multiplying the second equation of \eqref{s2} by $\alpha(s)$ (resp. $\eta(s)$) and using the fact that $2\alpha(s)\eta(s)+\tau^{2}(s)=-1$ (resp. $0,1$) in $H$ (resp. $C,$ $S$), we obtain \eqref{novo1} (resp. \eqref{novo2}).
\end{proof}
In our next lemma, we study the solutions of \eqref{s2} with constant function $\tau(s)$. 
\begin{lemma}\label{tri}
	Let $\psi(s)=(\alpha(s),\tau(s),\eta(s))$ be a non null solution of (\ref{s2}) defined on the maximal interval $I$, $a>0$, $c \in \R$ and initial condition $\psi(0)\in H\cup C\cup S$. The function $\tau(s)=b$, $\forall s \in I$, where $b\in\R$  if, and only if, $b \in \{-1,0,1\}$ and $I=\R$. Moreover,
	\begin{itemize}
		\item[i)] if $b=0$, then $\psi(s)=(\alpha_0,0,c\alpha_0)$ is a  singular solution of \eqref{s2} and $\psi(s) \in H$ (resp. $C$, $S$), for all $s\in\R$, when $c<0$ (resp. $c=0$, $c>0$);
		\item[ii)] if $b^{2}=1$, then $a^{2}=c^{2}$ and $\psi(s)=(\pm s+\alpha_0,\pm1,0) \in S$ for all $s \in \R.$
	\end{itemize}
\end{lemma}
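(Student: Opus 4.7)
The plan is to substitute $\tau(s)\equiv b$ into system \eqref{s2}, derive the resulting algebraic constraints, and then use the conserved invariant $2\alpha\eta+\tau^{2}$ — which (by Proposition \ref{c4p1}) is constant along any solution and distinguishes $H$, $C$, $S$ — to pin down the admissible values of $b$.

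First, $\alpha'(s)=\tau(s)=b$ forces $\alpha(s)=bs+\alpha_{0}$. Imposing $\tau'(s)\equiv 0$ in the second equation of \eqref{s2} gives $\eta(s)=[c+ab]\alpha(s)=[c+ab](bs+\alpha_{0})$. Differentiating and matching with the third equation $\eta'(s)=-[c+ab]b$ yields the key condition
\[
2b[c+ab]=0,
\]
so either $b=0$ or $c+ab=0$.

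In the case $b=0$, one obtains the constant (singular) solution $\psi(s)=(\alpha_{0},0,c\alpha_{0})$, defined on all of $\R$. Since $\psi$ is non null we have $\alpha_{0}\neq 0$, so the invariant equals $2c\alpha_{0}^{2}$; matching against $-1,0,1$ and the sign requirement $\alpha\le 0$ in the definitions of $H$ and $C$ gives the trichotomy in part (i). In the case $c+ab=0$ with $b\neq 0$, the second equation forces $\eta(s)\equiv 0$, so the invariant reduces to $b^{2}>0$. The hypothesis $\psi(0)\in H\cup C\cup S$ therefore compels $b^{2}=1$ and $\psi\in S$; combined with $c=-ab$ this yields $c^{2}=a^{2}$ and $\psi(s)=(\pm s+\alpha_{0},\pm 1,0)$, defined for all $s\in\R$, as in part (ii).

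The converse direction in each case is an immediate verification that the exhibited triples solve \eqref{s2} globally, hence $I=\R$. There is no serious obstacle: the argument is a short case split, and the only delicate step is invoking the invariant $2\alpha\eta+\tau^{2}$ to eliminate all values of $b\neq 0$ other than $\pm 1$ — without it one could only conclude $c+ab=0$ without fixing $b$.
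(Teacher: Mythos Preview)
Your proof is correct and follows essentially the same route as the paper's: substitute $\tau\equiv b$ into \eqref{s2} to get $\alpha(s)=bs+\alpha_{0}$, $\eta(s)=(c+ab)(bs+\alpha_{0})$ and the compatibility condition $2b(c+ab)=0$, then invoke the conserved quantity $2\alpha\eta+\tau^{2}\in\{-1,0,1\}$ to finish the case split. The paper's proof is just a terser version of yours.
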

\begin{proof} 
 It folows from \eqref{s2} that $\tau(s)=b$, if and only if $\alpha(s)=bs+\alpha_0$, $\eta(s)=(c+ab)(bs+\alpha_0)$ and $2(c+ab)b=0$. 
Since $2\alpha(s)\eta(s)+\tau^{2}(s)=\gamma$, where $\gamma \in \{-1,0,1\}$ then if $b=0$, then  i) holds and if $b\neq 0$ then $c+ab=0$,  $\eta=0$, 
$b^2=1$ and $a^2=c^2$, which proves ii).     
\end{proof}

Lemma \ref{tri} shows that the solutions $\psi(s)$ of (\ref{s2}), for which  $\tau(s)$ is  constant,  are explicitly given and they  correspond to the conic sections of $Q_{+}^{2}$, with constant curvature $k(s)=c+ab$.  
In this context, we define a \it trivial solution \rm $\psi(s)=(\alpha(s),\tau(s),\eta(s))$ of \eqref{s2} when $\tau(s)$ is a constant function. From now on, we will study only non trivial solutions $\psi(s)$.

Our next three lemmas analyze the critical points of the functions $\alpha(s)$, $\tau(s)$ and $\eta(s)$. 

\begin{lemma}\label{ln1}
	Let $\psi(s)=(\alpha(s),\tau(s),\eta(s))$ be a non trivial solution of (\ref{s2}) defined on the maximal interval $I$, $a>0$, $c \in \R$ and initial condition $\psi(0)\in H\cup C\cup S$. If $s_0$ is a critical point of $\alpha(s)$ then it is a local minimum (resp. maximum) point of $\alpha(s)$ if, and only if, $c\alpha(s_0)-\eta(s_0)>0$ (resp. $c\alpha(s_0)-\eta(s_0)<0$). Moreover: 
	\begin{itemize}
		\item[i)] if $c\geq 0$ and $\psi(0)\in H\cup C$, then $s_0$ is a global maximum point of $\alpha(s)$;
		\item[ii)] if $c< 0$ and $\psi(0)\in H$, then $s_0$ is a local minimum (resp. maximum) point of $\alpha(s)$ if, and only if, $2c\alpha^{2}(s_0)+1<0$ (resp. $2c\alpha^{2}(s_0)+1>0$);
		\item[iii)] if $c< 0$ and $\psi(0))\in C$, then $s_0$ is a local minimum (resp. maximum) point of $\alpha(s)$ if, and only if, $\alpha(s_{0})<0$ (resp. $\alpha(s_{0})=0$).
	\end{itemize}
	\end{lemma}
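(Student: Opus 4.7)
The plan is to exploit the identity $\alpha'(s)=\tau(s)$ to identify the critical points of $\alpha$ with the zeros of $\tau$. At any such point $s_0$, the second equation of \eqref{s2} collapses to $\alpha''(s_0)=\tau'(s_0)=c\alpha(s_0)-\eta(s_0)$, so the second-derivative test yields the opening assertion whenever this quantity is nonzero. The subsequent work is to rule out ambiguous cases and to identify the sign of $\tau'(s_0)$ in each of the three situations listed. The key tool is Lemma \ref{novo}: at a zero of $\tau$, the identity \eqref{novo1} reduces to
\[
c\alpha^{2}(s_0)-\alpha(s_0)\tau'(s_0)=\varepsilon/2,\qquad \varepsilon=-1,0,1 \text{ for }\psi(0)\in H,C,S,
\]
and this is what I would combine with the invariants to extract $\tau'(s_0)$.

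For (ii), $\psi(0)\in H$ gives $\alpha(s_0)<0$, and the displayed identity solves to $\tau'(s_0)=\bigl(2c\alpha^{2}(s_0)+1\bigr)/\bigl(2\alpha(s_0)\bigr)$, whose sign is opposite that of $2c\alpha^{2}(s_0)+1$; this establishes (ii) directly. The same formula proves the $H$-part of (i), since $c\ge 0$ forces $2c\alpha^{2}(s_0)+1\ge 1>0$, so $\tau'(s_0)<0$ and $s_0$ is a strict local maximum. For the $C$-part of (i) and for (iii) the algebraic identity degenerates (both sides vanish), so I would instead appeal directly to the conserved quantity $2\alpha(s)\eta(s)+\tau^{2}(s)=0$ of an orbit in $C$: evaluated at $s_0$ it forces $\alpha(s_0)=0$ or $\eta(s_0)=0$.

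In the subcase $\alpha(s_0)<0$ one then has $\eta(s_0)=0$, so $\tau'(s_0)=c\alpha(s_0)$; this is negative for $c>0$ (strict local maximum, fitting (i)), positive for $c<0$ (strict local minimum, fitting (iii)), and vanishes when $c=0$, in which case all three entries of $\psi'(s_0)$ vanish and ODE uniqueness forces $\psi$ to be the constant solution---hence trivial, excluded by hypothesis. In the subcase $\alpha(s_0)=0$, the sign information from Remark \ref{ob1} gives $\eta(s_0)\ge 0$, and non-triviality upgrades this to $\eta(s_0)>0$ (else $\psi(s_0)=(0,0,0)$, forbidden in $C$); hence $\tau'(s_0)=-\eta(s_0)<0$, a strict local maximum under both $c\ge 0$ in (i) and $c<0$ in (iii). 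Combining subcases yields (i) in $C$ and the full statement of (iii).

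Finally, the global-maximum claim in (i) will follow once we observe that in all its subcases the strict inequality $\tau'(s_0)<0$ holds: the zeros of $\tau$ are then isolated, and each is a sign change from $+$ to $-$, so two consecutive zeros would require an intervening sign change from $-$ to $+$, which again would contradict $\tau'<0$ at a zero. Hence $s_0$ is the unique critical point, $\tau>0$ to its left and $\tau<0$ to its right, and $\alpha$ is monotonic on each side, giving a global maximum. The most delicate part I anticipate is the bookkeeping at the boundary $\alpha(s_0)=0$ in $C$, where \eqref{novo1} is uninformative and one must combine the conserved invariant, Remark \ref{ob1}, and the non-triviality hypothesis to rule out a ``local minimum at the top of the range'' pathology and conclude the correct sign of $\tau'(s_0)$.
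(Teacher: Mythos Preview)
Your approach is correct and follows the paper's core idea: compute $\alpha''(s_0)=\tau'(s_0)=c\alpha(s_0)-\eta(s_0)$ and determine its sign via the invariant $2\alpha\eta+\tau^2=\varepsilon$ evaluated at $\tau(s_0)=0$. There is, however, a small gap in your treatment of the opening ``if and only if'' claim: you rule out $\tau'(s_0)=0$ only case-by-case within (i)--(iii), and those items together cover only $\psi(0)\in H\cup C$, leaving $\psi(0)\in S$ unaddressed. The paper disposes of this once at the outset: if $\tau(s_0)=\tau'(s_0)=0$ then $\eta(s_0)=c\alpha(s_0)$, so $\psi(s_0)=(\alpha(s_0),0,c\alpha(s_0))$ is a singular point of \eqref{s2} and ODE uniqueness forces $\psi$ to be constant, contradicting non-triviality. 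You already use exactly this argument in your $c=0$, $\alpha(s_0)<0$ subcase of $C$; promoting it to the start covers all cases uniformly and makes the opening equivalence immediate.

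With that nonvanishing in hand, the paper's proof of (i) is also shorter than yours: for $\psi(0)\in H\cup C$ one has $\alpha(s_0)\le 0$ and $\eta(s_0)\ge 0$ directly (Remark \ref{ob1}), so $c\ge 0$ gives $\alpha''(s_0)=c\alpha(s_0)-\eta(s_0)\le 0$, and strictness follows from the nonvanishing just established---no need to split $H$ from $C$ or to invoke Lemma \ref{novo}. Your arguments for (ii), (iii), and the uniqueness-of-critical-point step in (i) match the paper's.
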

\begin{proof}	
	Let $s_0$ be a critical point of $\alpha(s)$. It follows from \eqref{s2} that $\alpha^{\prime \prime}(s_0)=\tau^{\prime}(s_0)=c\alpha(s_0)-\eta(s_0).$
	If there exists $s_0 \in I$ such that $\tau(s_0)=\tau^{\prime}(s_0)=\alpha^{\prime \prime}(s_0)=0$, then $(\alpha(s_0),0,c\alpha(s_0))$ is a singular solution of \eqref{s2}, which contradicts the hypothesis. Thus, $\alpha^{\prime \prime}(s_0)\neq 0$ and the result holds. 
	
	\it i) \rm When $\psi(0)\in H\cup C$ and $c\geq0$ we have  $\alpha(s_0)\leq 0$ and $\eta(s_0)\geq 0$ for all $s \in I $ and hence $\alpha^{\prime \prime}(s_0)=c\alpha(s_0)-\eta(s_0)<0$ i.e. $s_0$ is a global maximum point of $\alpha(s).$ 
	
	\it ii) \rm If $\psi(0)\in H$ and $c<0$ then $\alpha(s_0)<0$, $\eta(s_0)>0$, $2\alpha(s_0)\eta(s_0)=-1$ and $2\alpha(s_0)\alpha^{\prime \prime}(s_0)=2c\alpha^{2}(s_0)+1.$
	
	\it iii) \rm If $\psi(0)\in C$ and $c<0$, then $\alpha(s_0)\leq0$, $\eta(s_0)\geq 0$, $2\alpha(s_0)\eta(s_0)=0$, $\alpha^{2}(s_0)+\eta^{2}(s_0)\neq 0$ and $\alpha^{\prime \prime}(s_0)=c\alpha(s_0)-\eta(s_0).$  This proves item \it iii). \rm	 
\end{proof}

\begin{lemma}\label{ln2}
	Let $\psi(s)=(\alpha(s),\tau(s),\eta(s))$ be a non trivial solution of (\ref{s2}) defined on the maximal interval $I$, $a>0$, $c \in \R$ and initial condition $\psi(0)\in H\cup C\cup S$. Let $s_0$ be a critical point of $\eta(s).$ If $c=0$ then $\eta(s)$ is a  decreasing function and $s_0$ is an inflection point of $\eta(s)$.  If $c\neq0$ then $[c+a\tau(s_0)]\tau(s_0)=0.$ In this case
	\begin{itemize}
		\item[i)] If $\tau(s_0)=0$ then $\eta^{\prime \prime}(s_0)\neq 0$ and $s_0$  is a local minimum (resp. maximum) point of $\eta(s)$ if, and only if, $-c^{2}\alpha(s_0)+c\eta(s_0)>0$ (resp. $-c^{2}\alpha(s_0)+c\eta(s_0)<0$).
		\item[ii)] If $c+a\tau(s_0)=0$ then $\eta^{\prime \prime}(s_0)\neq 0$ and $s_0$ is a local minimum (resp. maximum) point of $\eta(s)$ if, and only if, $c\eta(s_0)<0$ (resp. $c\eta(s_0)>0$).
	\end{itemize}
\end{lemma}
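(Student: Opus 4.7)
The plan is to differentiate the third equation of \eqref{s2} and express $\eta''(s_0)$ in terms of the values of $\alpha,\tau,\eta$ at the critical point, then rule out degenerate cases by invoking non-triviality via Lemma \ref{tri}. From $\eta'(s)=-[c+a\tau(s)]\tau(s)$ the dichotomy $\tau(s_0)=0$ or $c+a\tau(s_0)=0$ at a critical point is immediate, and one further differentiation gives
\begin{equation*}
\eta''(s)=-[c+2a\tau(s)]\,\tau'(s).
\end{equation*}

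For $c=0$, I would start from $\eta'(s)=-a\tau^2(s)\le 0$, which forces $\eta$ to be non-increasing on $I$; in fact strictly decreasing, since non-triviality together with Lemma \ref{tri} implies $\tau$ cannot be identically zero on any subinterval. At a critical point, $\tau(s_0)=0$, so $\eta''(s_0)=0$ as well, and I would then compute $\eta'''(s_0)=-2a\,\tau'(s_0)^2$. The second equation of \eqref{s2} at $s_0$ collapses to $\tau'(s_0)=-\eta(s_0)$; if $\eta(s_0)$ were $0$, uniqueness of solutions would identify $\psi$ with the singular triple $(\alpha(s_0),0,0)$ from Lemma \ref{tri}(i), a contradiction. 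Hence $\eta'''(s_0)<0$, $\eta''$ changes sign at $s_0$, and $s_0$ is an inflection point of $\eta$.

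For $c\ne 0$, I would treat the two subcases separately. In (i), substituting $\tau(s_0)=0$ and $\tau'(s_0)=c\alpha(s_0)-\eta(s_0)$ into the formula above gives
\begin{equation*}
\eta''(s_0)=-c\,\tau'(s_0)=-c^2\alpha(s_0)+c\eta(s_0),
\end{equation*}
which yields the claimed criterion; non-vanishing follows because $\eta''(s_0)=0$ together with $c\ne 0$ would force $\tau'(s_0)=0$, making $(\alpha(s_0),0,c\alpha(s_0))$ the singular solution through $s_0$, contradicting non-triviality by uniqueness. In (ii), $c+a\tau(s_0)=0$ yields $\tau(s_0)=-c/a$, $c+2a\tau(s_0)=-c$, and $\tau'(s_0)=-\eta(s_0)$, so
\begin{equation*}
\eta''(s_0)=c\,\tau'(s_0)=-c\,\eta(s_0).
\end{equation*}
If this vanished, then $\eta(s_0)=0$ and $\tau'(s_0)=0$, and $(\alpha(s_0),-c/a,0)$ would be initial data for the trivial solution with $\tau\equiv -c/a$ (compatible with the quadric constraint only when $a^{2}=c^{2}$, as in Lemma \ref{tri}(ii)); uniqueness then identifies $\psi$ with this trivial solution, a contradiction.

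The main obstacle is the non-degeneracy step: establishing $\eta''(s_0)\ne 0$ (respectively $\eta'''(s_0)\ne 0$ when $c=0$) is not purely algebraic but requires tracing each degenerate possibility back to one of the trivial solutions classified in Lemma \ref{tri} and invoking uniqueness for the ODE system \eqref{s2}. The $c=0$ case carries the additional subtlety that $\eta''(s_0)$ is automatically zero, so the inflection assertion must be read off the sign of $\eta'''(s_0)$ rather than from a standard second-derivative test.
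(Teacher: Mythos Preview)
Your proof is correct and follows essentially the same approach as the paper: differentiate $\eta'(s)=-[c+a\tau(s)]\tau(s)$, evaluate $\eta''(s_0)$ in each subcase, and rule out degeneracy by identifying the would-be initial data with one of the trivial solutions from Lemma~\ref{tri}. Your treatment of the $c=0$ case is in fact more complete than the paper's, which simply asserts that $\eta$ is decreasing without explicitly verifying the inflection-point claim; your computation of $\eta'''(s_0)=-2a\,\tau'(s_0)^2<0$ fills that gap.
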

\begin{proof}
	Let $s_0$ be a critical point of $\eta(s)$, then $[c+a\tau(s_0)]\tau(s_0)=0$.  When $c=0$ it follows from \eqref{s2} that $\eta(s)$ is a decreasing function on $I$.
	
	\it i) \rm Suppose that $c\neq 0$ and $\tau(s_0)=0$, then (\ref{s2}) implies that $\eta^{\prime \prime}(s_{0})=-c^{2}\alpha(s_{0})+c\eta(s_0)$.  Moreover,   $\eta^{\prime \prime}(s_0)\neq 0$, otherwise,    $\psi(s)=(\alpha(s_0),0,\eta(s_0))$, $s \in \R$ would be a singular solution of \eqref{s2}, which contradicts the hypothesis. 
	
	\it ii) \rm Suppose $c\neq 0$ and $\displaystyle c+a\tau(s_{0})=0$,    then  (\ref{s2}) implies  that 	
	$\eta^{\prime\prime}(s_{0})=c\tau^{\prime}(s_{0})=-c\eta(s_{0})$. 
	Moreover, $\eta^{\prime \prime}(s_0)\neq 0$. In fact, otherwise  $\alpha(s_0)\eta(s_0)=0,$ $\displaystyle a^2\tau^{2}(s_0)=c^{2}$. Thus, $\psi(s_0) \in S$,  $c^{2}=a^{2}$, $\tau^{2}(s_0)=1$ and $\psi(s_0)=(\alpha(s_0),\pm1,0)$. However, it follows from Lemma \ref{tri} that $\psi(s)=(\pm s+\alpha(s_0),\pm1,0)$, $s \in \R$ are trivial solutions of \eqref{s2},  contradicting the hypothesis. 
\end{proof}

\begin{lemma}\label{ln3}
	Let $\psi(s)=(\alpha(s),\tau(s),\eta(s))$ be a non trivial solution of (\ref{s2}) defined on the maximal interval $I$, $a>0$, $c \in \R$ and initial condition $\psi(0)\in H\cup C\cup S$. If $s_0$ is a critical point of $\tau(s)$ then $\tau^{\prime \prime}(s_0)\neq 0$ and it is a local minimum (resp. maximum) point of $\tau(s)$ if, and only if, $\displaystyle \tau(s_0)[c+a\tau(s_0)]>0$ (resp. $\displaystyle \tau(s_0)[c+a\tau(s_0)]<0$). In particular:
	\begin{itemize}
		\item[i)] if $c=0$ then $s_0$ is a global minimum point of $\tau(s);$
		\item[ii)] if $\psi(0)\in H\cup C$ then $c+a\tau(s_0)<0;$
		\item[iii)] if $c>0$ and $\psi(0)\in H\cup C$ then $s_0$ is a global minimum point of $\tau(s)$.
	\end{itemize}	
\end{lemma}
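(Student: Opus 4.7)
The plan is to compute $\tau''$ from the system \eqref{s2}, evaluate it at a critical point $s_0$ of $\tau$, show that it is nonzero for non trivial solutions, and read off the sign criterion. First I would differentiate $\tau'=[c+a\tau]\alpha-\eta$, using $\alpha'=\tau$ and $\eta'=-[c+a\tau]\tau$ from \eqref{s2}, obtaining $\tau''=a\tau'\alpha+2[c+a\tau]\tau$. At a critical point, $\tau'(s_0)=0$ reduces this to the key identity
\[
\tau''(s_0)=2\tau(s_0)\bigl[c+a\tau(s_0)\bigr],
\]
from which the local min/local max dichotomy in the statement follows at once, provided we first verify $\tau''(s_0)\neq 0$.

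To exclude $\tau''(s_0)=0$, I would argue by uniqueness for \eqref{s2} combined with Lemma \ref{tri}. If $\tau(s_0)=0$, then $\tau'(s_0)=0$ forces $\eta(s_0)=c\alpha(s_0)$, so $\psi(s_0)$ matches the initial value of the singular solution $(\alpha(s_0),0,c\alpha(s_0))$ appearing in Lemma \ref{tri}(i), and uniqueness makes $\psi$ trivial, a contradiction. If instead $c+a\tau(s_0)=0$, then $\tau'(s_0)=0$ forces $\eta(s_0)=0$ and $\tau(s_0)=-c/a\neq 0$; checking $2\alpha\eta+\tau^2\in\{-1,0,1\}$ forces $\psi(0)\in S$ with $c^2=a^2$ and $\tau(s_0)=\pm 1$, and Lemma \ref{tri}(ii) together with uniqueness again make $\psi$ trivial, contradicting the hypothesis.

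For the particular cases: in (i), $c=0$ gives $\tau''(s_0)=2a\tau(s_0)^2>0$, so every critical point of $\tau$ is a strict local minimum; a smooth function whose critical points are all nondegenerate local minima has at most one critical point, because between two such minima continuity would produce an interior maximum on the closed interval, which would be a third critical point where $\tau''\leq 0$, contradicting the dichotomy we just established. Hence the unique critical point is the global minimum. For (ii), I would verify that $\psi(0)\in H\cup C$ forces $\alpha(s_0)<0$ and $\eta(s_0)>0$: in $H$ this follows from $2\alpha\eta=-1-\tau^2<0$ combined with $\alpha<0$; in $C$, $\alpha(s_0)=0$ would force $\tau(s_0)=0$, already excluded, so $\alpha(s_0)<0$ and $\eta(s_0)=-\tau(s_0)^2/(2\alpha(s_0))>0$. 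Rewriting $\tau'(s_0)=0$ as $[c+a\tau(s_0)]\alpha(s_0)=\eta(s_0)>0$ with $\alpha(s_0)<0$ yields $c+a\tau(s_0)<0$. Finally, (iii) follows by combining (ii) with $c>0$: then $\tau(s_0)<-c/a<0$, so $\tau(s_0)[c+a\tau(s_0)]>0$, hence every critical point is a local minimum and the argument from (i) delivers a unique global minimum.

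The main obstacle is the careful exclusion of $\tau''(s_0)=0$, since both degenerate possibilities must be cross-referenced against the constraint surfaces $H$, $C$, $S$ and the explicit trivial solutions classified in Lemma \ref{tri} in order to reach a contradiction via uniqueness; once this is in place, the remainder reduces to straightforward sign chasing inside \eqref{s2} together with the elementary observation about smooth functions whose only critical points are nondegenerate local minima.
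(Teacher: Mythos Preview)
Your proof is correct and follows essentially the same approach as the paper: compute $\tau''(s_0)=2\tau(s_0)[c+a\tau(s_0)]$ at a critical point, exclude the degenerate cases by matching against the trivial solutions of Lemma~\ref{tri} via uniqueness, and then read off the sign conditions for (i)--(iii). Your treatment is in fact slightly more careful than the paper's in two places: you explicitly justify why $\alpha(s_0)<0$ and $\eta(s_0)>0$ in the $C$ case (the paper simply asserts this), and you spell out the elementary argument that a function with only nondegenerate local minima has at most one critical point, whereas the paper jumps directly from $\tau''(s_0)>0$ to ``global minimum.''
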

\begin{proof}
	Let $s_{0}$ be a critical point of $\tau(s)$, then  $[c+a\tau(s_{0})]\alpha(s_{0})=\eta(s_{0})$. Taking the second derivative of $\tau(s)$ at $s_0$, it follows from\eqref{s2}, that 		$\tau^{\prime \prime}(s_{0})
		=2[c+a\tau(s_{0})]\tau(s_{0})$.
Assume by contradiction that $\tau^{\prime \prime}(s_{0})=0$, i.e.  
$2[c+a\tau(s_{0})]\tau(s_{0})=0$,  
then  either $\tau(s_{0})=0$ or $c+a\tau(s_0)=0$. If $\tau(s_0)=0$ then $c\alpha(s_0)=\eta(s_0)$ and $\psi(s)=(\alpha(s_0),0,c\alpha(s_0))$, $s \in \R$ would be a singular solution of \eqref{s2}, which contradicts the hypothesis. If $c+a\tau(s_0)=0$ we would have $\eta(s_0)=0$, $a^2\tau^{2}(s_0)=c^{2}.$ Thus, $\psi(s_0) \in S$, $c^{2}=a^{2}$, $\tau^{2}(s_0)=1$ and $\psi(s_0)=(\alpha(s_0),\pm1,0)$.  However, it follows from Lemma \ref{tri} that $\psi(s)=(\pm s+\alpha(s_0),\pm1,0)$, $s \in \R$ are trivial solutions of \eqref{s2}, contradicting the hypothesis.

\it i) \rm Suppose that $c=0$ then $\tau^{\prime \prime}(s_0)=a\tau^{2}(s_0)>0$ i.e. $s_0$ is a global minimum point of $\tau(s)$. \

\it ii) \rm If $\psi(0)\in H\cup C$ then $\alpha(s_0)<0$ and $\eta(s_0)>0$. Thus,  $[c+a\tau(s_{0})]\alpha(s_{0})=\eta(s_{0})>0$ and $c+a\tau(s_0)<0.$

\it iii) \rm If $c>0$ and $\psi(0)\in H\cup C$ then it follows from \it ii) \rm that $c+\tau(s_0)<0$ and $\tau(s_0)<0$. Therefore, $\tau^{\prime \prime}(s_0)> 0$ and $s_0$ is a global minimum point of $\tau(s).$
\end{proof}

 Our next lemma shows that the curvature function $k(s)=c+a\tau(s)$ of a self-similar solution to the CF has at  most two zeros. The importance of the number of zeros of the curvature is due to the fact that it determines the number of connected components of the associated self-similar solutions to the ICF.  
  
\begin{lemma}\label{ln4}
	Let $\psi(s)=(\alpha(s),\tau(s),\eta(s))$ be a non trivial solution of (\ref{s2}) defined on the maximal interval $I=(\omega_{-}, \omega_{+})$, $a>0$, $c \in \R$ and initial condition $\psi(0)\in H\cup C\cup S$.
\begin{itemize}
		\item[i)] If $c\geq0$ then $\tau(s)$ has at  most two zeros.
		\item[ii)] If $c<0$, then $\tau(s)$ has an infinite number  of zeros in $ I$ if, and only if, the functions $\alpha(s)$, $\tau(s)$ and $\eta(s)$ have an infinite number  of critical point in $I$.
		\item[iii)] The function $c+a\tau(s)$ has at  most two zeros.
	\end{itemize}
\end{lemma}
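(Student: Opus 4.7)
The plan is to combine Rolle's theorem with a sign analysis based on Lemma \ref{ln3} and on the invariant $2\alpha(s)\eta(s)+\tau^{2}(s)=\gamma$, with $\gamma\in\{-1,0,1\}$ according to whether $\psi(0)$ belongs to $H$, $C$ or $S$.

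For (i) I would argue by contradiction. Suppose $\tau$ has three zeros $s_{1}<s_{2}<s_{3}$. First observe that for a non-trivial solution every zero of $\tau$ is simple: if $\tau(s_{0})=\tau'(s_{0})=0$, the second equation of \eqref{s2} forces $\eta(s_{0})=c\alpha(s_{0})$ and uniqueness of ODE solutions identifies $\psi$ with the singular solution $(\alpha(s_{0}),0,c\alpha(s_{0}))$ of Lemma \ref{tri}, contradicting non-triviality. Hence $\tau$ changes sign at each $s_{i}$, and in exactly one of the two intervals $(s_{1},s_{2})$ or $(s_{2},s_{3})$ it is positive, attaining there a local maximum at some $t^{*}$. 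By Lemma \ref{ln3}, $\tau(t^{*})[c+a\tau(t^{*})]<0$; since $\tau(t^{*})>0$, this forces $c+a\tau(t^{*})<0$, i.e.\ $\tau(t^{*})<-c/a\leq 0$, contradicting $\tau(t^{*})>0$.

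For (ii), both directions are Rolle-type. If $\tau$ has infinitely many zeros, each is a critical point of $\alpha$ (as $\alpha'=\tau$), Rolle between consecutive zeros produces infinitely many critical points of $\tau$, and $\eta'=-(c+a\tau)\tau$ vanishes at every zero of $\tau$, giving infinitely many critical points of $\eta$. Conversely, $\alpha'=\tau$ turns infinitely many critical points of $\alpha$ directly into infinitely many zeros of $\tau$; alternatively, from infinitely many critical points of $\eta$ one has $\tau(s_{n})=0$ or $c+a\tau(s_{n})=0$ at each, and (iii) rules out the latter at more than two points, forcing infinitely many zeros of $\tau$.

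For (iii), set $\Phi(s)=c+a\tau(s)$ and assume $\Phi$ has three zeros $s_{1}<s_{2}<s_{3}$. Differentiating along \eqref{s2} yields, at any zero $s_{0}$ of $\Phi$,
\[
\Phi'(s_{0})=-a\,\eta(s_{0}),\qquad \Phi''(s_{0})=-a^{2}\,\alpha(s_{0})\eta(s_{0}),
\]
while $\tau(s_{0})=-c/a$ and the invariant give $\alpha(s_{0})\eta(s_{0})=(\gamma a^{2}-c^{2})/(2a^{2})$, a constant independent of the zero. If $\eta(s_{0})=0$, then $\tau'(s_{0})=0$ and $c+a\tau(s_{0})=0$ simultaneously, which by the argument in the proof of Lemma \ref{ln3} collapses $\psi$ to a trivial solution. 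Hence non-triviality gives $\Phi'(s_{0})\neq 0$; zeros of $\Phi$ are simple and $\Phi$ alternates sign at them. For $\psi(0)\in H\cup C$ with $c\neq 0$ (the case $c=0$ in $C$ is subsumed in (i) since $\Phi=a\tau$), the invariant gives $\alpha(s_{0})\eta(s_{0})<0$; combined with $\alpha\leq 0$ this forces $\alpha(s_{0})<0$ and $\eta(s_{0})>0$ at every zero, so $\Phi'(s_{0})<0$ everywhere $\Phi$ vanishes, which is incompatible with two consecutive simple zeros (between them $\Phi$ would need to recover from negative to positive, requiring another zero). For $\psi(0)\in S$, the alternating sign change of $\Phi$ forces alternating signs of $\eta(s_{i})$; the fixed sign of $\alpha(s_{0})\eta(s_{0})=(a^{2}-c^{2})/(2a^{2})$ then pins down the sign of each $\alpha(s_{i})$. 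Lemma \ref{ln3} constrains the sign of $\tau$ at the unique interior extremum on each sub-interval $(s_{i},s_{i+1})$ via the known sign of $\Phi$ there, and integrating $\alpha'=\tau$ on $(s_{i},s_{i+1})$ gives $\alpha(s_{i+1})-\alpha(s_{i})=\int_{s_{i}}^{s_{i+1}}\tau(s)\,ds$, whose sign contradicts the one prescribed by the analysis above. Running this case analysis parameterised by $\mathrm{sign}(c)$ and the comparison of $c^{2}$ with $a^{2}$ rules out the existence of three zeros.

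The main obstacle is part (iii) in the spacelike case $\psi(0)\in S$, where the sign of $\alpha$ is not predetermined along the solution; the contradictions there require an interlocking bookkeeping of the signs of $c$, of $\Phi$ on each sub-interval, of $\alpha\eta$ at zeros of $\Phi$, and of the integral of $\tau$ across each sub-interval.
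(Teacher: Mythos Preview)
Your arguments for (i) and (ii) are correct and essentially match the paper's: both rely on Lemma~\ref{ln3} to rule out local maxima of $\tau$ where $\tau>0$ (when $c\geq 0$), together with simplicity of zeros coming from non-triviality. Your treatment of (iii) for $\psi(0)\in H\cup C$ via $\Phi'(s_0)=-a\eta(s_0)<0$ at every zero is a nice variant that in fact yields at most \emph{one} zero there; the paper instead quotes Lemma~\ref{ln3}(iii) (for $c>0$) and a symmetric version of (i) (for $c\leq 0$).

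The genuine gap is (iii) for $\psi(0)\in S$. First, you do not separately treat $c\leq 0$; the paper disposes of this for all three sets at once by the mirror of (i): if $c\leq 0$ and $c+a\tau>0$ on an interval, then $\tau>0$ there and Lemma~\ref{ln3} forbids a local maximum, so $c+a\tau$ can cross zero at most twice. Second, for $c>0$ with $\psi(0)\in S$ your outline does not close. The ``unique interior extremum'' of $\tau$ on each $(s_i,s_{i+1})$ is unjustified, and the integral $\alpha(s_{i+1})-\alpha(s_i)=\int_{s_i}^{s_{i+1}}\tau\,ds$ need not produce a sign conflict: for instance, with $c>0$, $a>c$ and $\Phi'(s_1)<0$, one finds $\Phi<0$ on $(s_1,s_2)$ so $\tau<-c/a<0$ and $\alpha(s_2)-\alpha(s_1)<0$ consistently; on $(s_2,s_3)$ one only gets $\tau>-c/a$, and your integral merely forces the two zeros of $\tau$ allowed by (i) to lie in $(s_2,s_3)$, with no contradiction. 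The promised ``case analysis parameterised by $\mathrm{sign}(c)$ and $c^2$ versus $a^2$'' is never carried out, and the special case $a^2=c^2$ (where $\alpha(s_i)\eta(s_i)=0$) is not addressed.

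The paper's argument for $c>0$, $\psi(0)\in S$ is different and sharper. One first shows $\tau<0$ on $[s_1,s_3]$ (since a local maximum with $\tau>0$ is forbidden by Lemma~\ref{ln3} when $c>0$). Then $s_1,s_2,s_3$ are critical points of $\eta$, and Lemma~\ref{ln2}(ii) forces the signs of $\eta(s_i)$ to alternate (local max/min according to $c\eta(s_i)\gtrless 0$). By the intermediate value theorem $\eta$ vanishes at some $b\in(s_1,s_2)$ and $d\in(s_2,s_3)$; the invariant $2\alpha\eta+\tau^2=1$ with $\eta=0$ and $\tau<0$ gives $\tau(b)=\tau(d)=-1$. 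Since $c+a\tau<0$ on $(s_1,s_2)$ and $c+a\tau>0$ on $(s_2,s_3)$, evaluating at $b$ and $d$ yields $c-a<0$ and $c-a>0$ simultaneously, the desired contradiction. This use of the zeros of $\eta$ together with the quadratic invariant is the key idea your sketch is missing.
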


\begin{proof}
	\it i) \rm If  $c\geq0$ and $\tau({s})>0$, for $s\in J\subset I$, then  $[c+a\tau(s)]\tau(s)>0$, and it follows from Lemma \ref{ln3} that $\tau(s)$ has no points of local maximum in $J$. Moreover, if $s_0 \in I$ satisfies $\tau(s_0)=0$ then $s_0$ is not a critical point of $\tau$. Hence,  if $\tau^{\prime}(s_0)>0$ then $\tau(s)$ is increasing on $ (s_0,\omega_+)$ and if $\tau^{\prime}(s_0)<0$ then $\tau(s)$ is decreasing on$ (\omega_-,s_0)$. Therefore, the function $\tau(s)$ has at  most two zeros. 
		
	\it ii) \rm It follows from \eqref{s2} that the zeros of $\tau(s)$ are critical points of $\alpha(s)$ and $\eta(s)$. Suppose that $c<0$ then  Lemma \ref{ln3} implies that $\tau(s)$ may have local maximum (resp. minimum) points when $0<a\tau(s)<-c$ (resp. $\tau(s)<0$). 
	 Therefore, if $c<0$ then $\tau(s)$ has an infinite number of zeros if, and only if, $\tau(s)$ has an infinite number of local critical points.
	
	\it iii) \rm When $c\leq 0$ and $c+a\tau(s)>0$ for $s\in J\subset I$, then  $[c+a\tau(s)]\tau(s)>0$ and  it follows from Lemma \ref{ln3} that  $\tau(s)$ has no local maximum in $J$. Moreover, if $s_0 \in I$ is such that $c+a\tau(s_0)=0$ then $s_0$ is not a critical point of $\tau$. 
	Therefore,  when $c\leq 0$, the function $c+a\tau(s)$ has at  most two zeros. 
	
	Suppose that $c>0$ and $\psi(0)\in H\cup C$, it follows from item \it iii) \rm of Lemma \ref{ln3}  that $\tau(s)$ has at  most one critical point, thus, $c+a\tau(s)$ has at  most two zeros.
		 Finally, suppose that $c>0$ and $\psi(0)\in S$ and assume by contradiction that there exist three consecutive zeros  $s_1,s_2,s_3 \in I$ such that $s_1<s_2<s_3$, $c+a\tau(s_i)=0$, for $i=1,23$ and $c+a\tau(s)\neq 0$ for all $s \in (s_1,s_2)\cup (s_2,s_3)$. It follows from Lemma \ref{ln3} that $\tau(s)<0$ for all $s \in (s_1,s_3)$, because $\tau(s)$ has no local maximum points when $\tau(s)>0$. Moreover,  $s_1,s_2$ and $s_3$ are also critical points of $\eta(s)$.  If $s_1$ is a local maximum of $\eta(s)$, then it follows from (\ref{s2}) and Lemma \ref{ln2} that: $\eta(s_1)>0$, $\tau(s)$ is decreasing at $s=s_1$, $s_2$ is a local minimum of $\eta(s)$, $\eta(s_2)<0$, $\displaystyle c+a\tau(s)<0$ for all $s \in (s_1,s_2)$, $s_3$ is a local maximum of $\eta(s)$, $\eta(s_3)>0$ and $c+a\tau(s)>0$ for all $s \in (s_2,s_3)$. Thus, there exist $ b \in (s_1,s_2)$ and $d \in (s_2,s_3)$ such that $\eta(b)=\eta(d)=0$. Since $2\alpha(s)\eta(s)+ \tau^{2}(s)=1$ and  $\tau(s)<0$ for all $s \in (s_1,s_3)$, it follows that $\tau(b)=\tau(d)=-1$ i.e. $\displaystyle c-a<0$ and $\displaystyle c-a>0$. This is a contradiction. With similar arguments we obtain a contradiction if $s_1$ is a local minimum of $\eta(s)$.  Therefore, $c+a\tau(s)$ has at  most two zeros.	
\end{proof}

\begin{lemma}\label{ln5}
	Let $\psi(s)=(\alpha(s),\tau(s),\eta(s))$ be a non trivial solution of (\ref{s2}) defined on the maximal interval $I=(\omega_{-}, \omega_{+})$, $a>0$, $c<0$ and initial condition $\psi(0)\in H\cup C\cup S$. If $\tau(s)$ has an infinite number of zeros in the interval $(\omega_{-},\overline{s})$ (resp. $(\overline{s},\omega_{+})$) for some $\overline{s} \in I$ fixed, then the functions  $\alpha(s),$ $\tau(s)$ and $\eta(s)$ are bounded on the intervals $(\omega_{-},\overline{s})$ (resp. $(\overline{s},\omega_{+})$). 
\end{lemma}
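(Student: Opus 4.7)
The plan is to define the auxiliary function $E(s) := \eta(s) + c\alpha(s)$ and to argue that boundedness of $E$ on the relevant interval implies boundedness of all of $\alpha,\tau,\eta$ via the conservation law of Proposition~\ref{c4p1}. A direct computation from \eqref{s2} gives $E'(s) = -(c+a\tau(s))\tau(s) + c\tau(s) = -a\tau^2(s) \leq 0$, so $E$ is non-increasing on $I$. Substituting $\eta = E - c\alpha$ into $2\alpha\eta + \tau^2 = \gamma$, with $\gamma\in\{-1,0,1\}$ selected by whether $\psi(0)\in H, C, S$, and using $c<0$, yields the quadratic
\begin{equation*}
2|c|\alpha^2(s) + 2E(s)\alpha(s) + \tau^2(s) - \gamma = 0,
\end{equation*}
whose discriminant condition is $\tau^2(s) \leq E^2(s)/(2|c|) + \gamma$. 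Hence whenever $E$ is bounded above on an interval $J\subset I$, $\tau$, $\alpha$ (via the quadratic formula) and $\eta = E-c\alpha$ are all bounded on $J$.

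For the case of infinitely many zeros of $\tau$ in $(\overline{s},\omega_+)$, the monotonicity of $E$ immediately gives $E(s)\le E(\overline{s})<\infty$ for $s\in(\overline{s},\omega_+)$, so the conclusion follows from the previous paragraph.

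For the case of infinitely many zeros in $(\omega_-,\overline{s})$, the required upper bound on $E$ is not immediate and constitutes the main obstacle. First, by Rolle's theorem and the uniqueness of solutions of \eqref{s2}, the zeros of $\tau$ in $(\omega_-,\overline{s})$ cannot accumulate at an interior point $s^*\in I$: at such a point one would have $\tau(s^*)=\tau'(s^*)=0$, whence $\psi(s^*)=(\alpha(s^*),0,c\alpha(s^*))$ is a fixed point of \eqref{s2}, contradicting non-triviality via Lemma~\ref{tri}. So the zeros accumulate at $\omega_-$. At each zero $s_n$ the conservation law gives $\alpha(s_n)\eta(s_n)=\gamma/2$, while at each extremum $u_n$ of $\tau$ between consecutive zeros Lemma~\ref{ln3}(ii) (for $\psi(0)\in H\cup C$) yields $c+a\tau(u_n)<0$, together with $\alpha^2(u_n)=(\gamma-\tau^2(u_n))/(2(c+a\tau(u_n)))$. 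Combining these identities with the monotonicity of $E$, one analyses how $E(s_n)$ evolves between consecutive zeros: the alternation of sign of $\tau'(s_n)=c\alpha(s_n)-\eta(s_n)$ forces $|\alpha(s_n)|$ to alternate about the critical value $\sqrt{|\gamma|/(2|c|)}$, and the quadratic dependence of $E(s_n)$ on $\alpha(s_n)$ coming from $\eta(s_n)=\gamma/(2\alpha(s_n))$ makes the divergence $E(s_n)\to\infty$ incompatible with the monotone decrease of $E$ along the flow. This yields the desired bound $\sup E<\infty$ on $(\omega_-,\overline{s})$, and a case-by-case analysis of $\psi(0)\in H$, $C$, $S$, adapted to the sign conventions, handles the three subcases. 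I expect the alternation/compatibility argument to be the main technical hurdle.
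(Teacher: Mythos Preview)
Your reduction via the Lyapunov-type function $E(s)=\eta(s)+c\alpha(s)$ and the quadratic constraint is the same starting point as in the paper (there $E$ is called $g$), but the key implication you draw from it is incorrect. The discriminant inequality $\tau^2\le E^2/(2|c|)+\gamma$ and the quadratic formula for $\alpha$ only bound $\tau$ and $\alpha$ when $|E|$ is bounded, not when $E$ is bounded \emph{above}. Since $E'=-a\tau^2\le 0$, the inequality $E(s)\le E(\overline{s})$ on $(\overline{s},\omega_+)$ gives no lower bound for $E$, and indeed the infinitely-many-zeros hypothesis plays no role in your argument for that side. For $\psi(0)\in H\cup C$ the gap can be closed by observing that $\alpha\le 0$, $\eta\ge 0$ and $c<0$ force $E\ge 0$, so $0\le c\alpha\le E$ and $0\le\eta\le E$ directly bound $\alpha$ and $\eta$; but for $\psi(0)\in S$ neither $\alpha$ nor $\eta$ has a sign, $E$ may well tend to $-\infty$, and a genuinely different argument is required. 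The paper handles the $S$ case separately by a monotonicity claim on successive local extrema of $\alpha$ and $\eta$ at the zeros of $\tau$, which is where the oscillation hypothesis is actually used.

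On the $(\omega_-,\overline{s})$ side your sketch is in the right spirit for $H\cup C$---the paper likewise exploits the alternation of $\alpha(s_n)$ about $-1/\sqrt{-2c}$ and picks intermediate points $t_k$ with $\alpha(t_k)=-1/\sqrt{-2c}$, using that $0<\tau<-c/a$ on the relevant oscillation intervals to bound $\eta(t_k)$ and hence $E(t_k)$---but your statement ``the divergence $E(s_n)\to\infty$ [is] incompatible with the monotone decrease of $E$'' is not yet an argument: $E$ is \emph{increasing} as $s\to\omega_-$, so one must rule out $E\to+\infty$ by exhibiting a bounded subsequence, which requires the intermediate-point construction. More seriously, your outline invokes Lemma~\ref{ln3}(ii), which is stated only for $H\cup C$; for $\psi(0)\in S$ you have no replacement, and the paper's proof in that case proceeds by a different comparison of local maxima and minima of $\alpha$ (its ``Claim'') that you have not supplied.
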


\begin{proof} 
	Note that, the function $g(s)=c\alpha(s)+\eta(s)$, $s \in I$ is decreasing on $I$, since $g^{\prime}(s)=-a\tau^{2}(s)$ for all $s \in I.$ When $c<0$ it follows from item \it ii) \rm of  Lemma \ref{ln4} that the functions $\alpha(s)$, $\tau(s)$ and $\eta(s)$ have an infinite number of critical points if $\tau(s)$ has an infinite number of zeros. Thus, suppose that $ (s_{k})_{k \in \N}\subset (\omega_{-},\overline{s})$ such that $\tau(s_{k})=0$, $k \in \N$, $\tau(s)\neq0$ for all $s \in (s_{k+1},s_k)$ and $ \lim_{k \to +\infty}s_{k}=\omega_{-}$. Without loss generality, we can consider: $ (s_{2k})_{k \in \N}$ the local maximum points of $\alpha(s)$; $ (s_{2k+1})_{ \in \N}$ the local minimum points of $\alpha(s)$;  $\tilde{s}_{2k} \in (s_{2k+1},s_{2k})$ the local maximum points of $\tau(s)$ and $\tilde{s}_{2k+1} \in (s_{2k+2},s_{2k+1})$ the local minimum points of $\tau(s).$
	 Hence $\tau(s)>0$ on $(s_{2k+1},s_{2k})$ and $\tau(s)<0$ on $(s_{2k+2},s_{2k+1})$. It follows from Lemma \ref{ln3} that the local maximum points of $\tau(s)$ satisfy $\tau(\tilde{s}_{2k})>0$ and $c+ a\tau(\tilde{s}_{2k})<0$ for all $k \in \N$, and $ a\tau(s)+c<0$ for all $s \in (s_{2k+1},s_{2k}).$ In the similar way, suppose that $\displaystyle (s_{j})_{j \in \N}\subset (\overline{s},\omega_{+})$ such that $\tau(s_{j})=0$, $j \in \N$, $\tau(s)\neq0$ for all $s \in (s_{j},s_{j+1})$ and $ \lim_{j \to +\infty}s_{j}=\omega_{+}$.  Without loss generality, we can consider: $\displaystyle (s_{2j})_{j \in \N}$ the local maximum points of $\alpha(s)$; $\displaystyle (s_{2j+1})_{j \in \N}$ the local minimum points of $\alpha(s)$; $\tilde{s}_{2j} \in (s_{2j+1},s_{2j+2})$ the local maximum points of $\tau(s)$ and $\tilde{s}_{2j+1} \in (s_{2j+2},s_{2j+3})$ the local minimum points of $\tau(s).$ Hence $\tau(s)>0$ on $(s_{2j+1},s_{2j+2})$ and $\tau(s)<0$ on $(s_{2j+2},s_{2j+3})$. Thus, it follows from Lemma \ref{ln3} that the local maximum points of $\tau(s)$ satisfy $c+a\tau(\tilde{s}_{2j})<0$, $k \in \N$ and $ c+a\tau(s)<0$ for all $s \in (s_{2j+1},s_{2j+2}).$ We will divide the proof in two cases: $\psi(0)\in H\cup C$ and $\psi(0)\in S.$
	
	When $\psi(0)\in H\cup C$ then $\alpha(s)\leq 0$ and $\eta(s)\geq0$ for all $s \in I$ i.e. the decreasing function $g(s)=c\alpha(s)+\eta(s)$ is non negative and bounded on the interval $(\overline{s},\omega_{+})$. Moreover, $0\leq c\alpha(s)\leq g(s)\leq g(\overline{s})$, $0\leq\eta(s)\leq g(s)\leq g(\overline{s})$ for all $s>\overline{s},$ i.e. the functions $\alpha(s)$ and $\eta(s)$ are bounded on $(\overline{s},\omega_{+})$ and it follows from $2\alpha(s)\eta(s)+\tau^{2}(s)=\delta$, where $\delta \in \{-1,0\}$ that $\tau(s)$ is also bounded.
	
	We claim that $\alpha(s)$ is bounded on the interval $(\omega_{-},\overline{s}).$ In fact, assume by contradiction that $\alpha(s)$ is unbounded i.e. the sequence of the local minimum value of $\alpha(s)$, $ (\alpha(s_{2k+1}))_{k \in \N}\subset (\omega_{-},\overline{s})$ is unbounded.   When $\psi(0) \in H,$ then it follows from item \it ii) \rm of  Lemma \ref{ln1} that $\sqrt{-2c}\alpha(s_{2k+1})<-1$ and $  \sqrt{-2c}\alpha(s_{2k})>-1$ for all $k \in \N$ i.e. there exists $t_{2k}\in  (s_{2k+1},s_{2k})$ such that $\sqrt{-2c}\alpha(t_{2k})=-1$ for all $k \in \N$. When $\psi(0) \in C,$ it follows from item \it iii) \rm of  Lemma \ref{ln1} that $ \alpha(s_{2k+1})<0$ and $ \displaystyle \alpha(s_{2k})=0$ for all $k \in \N$, moreover, $\displaystyle \alpha(s_{2k+3})<\alpha(s_{2k+1})$ for all $k\in \N$, because $g(s)=c\alpha(s)+\eta(s)$ is a decreasing function and $\eta(s_{2k+1})=\eta(s_{2k+3})=0$, $k \in \N.$ Thus, there exist $k_{0}\in \N$ and $t_{2k}\in  (s_{2k+1},s_{2k})$ satisfying $\displaystyle \sqrt{-2c}\alpha(t_{2k})=-1$ for all $k >k_0$. Since $ c+a\tau(s)<0$ and $\tau(s)>0$ on $(s_{2k+1},s_{2k})$, and $2\alpha(s)\eta(s)+\tau^{2}(s)=\delta$, where $\delta \in \{-1,0\}$, then $\displaystyle (\eta(t_{2k}))_{k >k_0}$ is also bounded. Hence, $\displaystyle (g(t_{2k}))_{k >k_0}$ is bounded and monotone. Therefore, $g(s)$ is bounded and $0 \leq c\alpha(s)\leq g(s)$ for all $s \in (\omega_{-},\overline{s})$.  But this  contradicts the assumption that $\alpha(s)$ is unbounded. Thus, $\alpha(s)$ is bounded on the interval $(\omega_{-},\overline{s}).$ 
	
	It follows from \eqref{novo2} that 
	\begin{equation*}
	\eta^{2}(\tilde{s}_{2k})=\displaystyle{\left\{\displaystyle{\begin{array}{lll}
			\displaystyle -\frac{[c+a\tau(\tilde{s}_{2k})][1+\tau^{2}(\tilde{s}_{2k})]}{2} & \text{if}& \psi(0) \in H,\\ \\
			\displaystyle	-\frac{[c+a\tau(\tilde{s}_{2k})]\tau^{2}(\tilde{s}_{2k})}{2} & \text{if} &\psi(0) \in C.
			\end{array}}\right.}
	\end{equation*}
	Since $\tau(\tilde{s}_{2k})>0$ and $c+ a\tau(\tilde{s}_{2k})<0$, then $(\eta(\tilde{s}_{2k}))_{k \in \N}$ is bounded. Therefore, $g(\tilde{s}_{2k})=c\alpha(\tilde{s}_{2k})+\eta(\tilde{s}_{2k})$, $k \in \N$ is bounded and monotone. Thus, $g(s)$ is bounded, $0\leq c\alpha(s)\leq g(s)$, $0 \leq \eta(s)\leq g(s)$ for all  $s \in (\omega_{-},\overline{s})$. Therefore, the functions $\alpha(s)$, $\eta(s)$ and $\tau(s)$ are bounded on the interval $(\omega_{-},\overline{s}).$
	
	Finally, we will study the case $\psi(0)\in S$. Note that, when $s_0$ satisfy $\tau(s_0)=0$, then $s_0$ is a critical point of $\alpha(s)$ and of  $\eta(s)$, and $2\alpha(s_0)\eta(s_0)=1$. Thus, it follows from Lemmas \ref{ln1} and \ref{ln2} that, if $\alpha(s_0)>0$ (resp. $\alpha(s_0)<0$) then $s_0$ is a local maximum (resp. minimum) point of $\alpha(s)$ and of $\eta(s)$
	
	\bf Claim: \it  If $s_0,\overline{s}_0 \in I$, $s_0<\overline{s}_0$ are local maximum or local minimum points of $\alpha(s)$ (resp. $\eta(s)$), then $\alpha(s_0)\leq\alpha(\overline{s}_0)$ (resp. $\eta(s_0)\geq \eta(\overline{s}_0)$). \rm In fact, since $\tau(s_0)=\tau(\overline{s}_0)=0$, $\displaystyle 2\alpha(s_0)\eta(s_0)=1$, $\displaystyle 2\alpha(\overline{s}_0)\eta(\overline{s}_0)=1$ and $g(s)=c\alpha(s)+\eta(s)$ is a  decreasing function, we have $g(s_0)\geq g(\overline{s}_0)$  and 
	$$\frac{2c\alpha^{2}(s_0)+1}{2\alpha(s_0)}\geq \frac{2c\alpha^{2}(\overline{s}_0)+1}{2\alpha(\overline{s}_0)}.$$
	  It follows from Lemma \ref{ln1} that the local maximum (resp. minimum) values of $\alpha$ are positive (resp. negative). Thus, $\alpha(s_0)\alpha(\overline{s}_0)>0$ and $\displaystyle 2c\alpha(s_0)\alpha(\overline{s}_0)\left[\alpha(s_0)-\alpha(\overline{s}_0) \right]\geq \alpha(s_0)-\alpha(\overline{s}_0).$ If $\alpha(s_0)>\alpha(\overline{s}_0)$ we would have $2c\alpha(s_0)\alpha(\overline{s}_0)\geq1$, this is a contradiction, since  $c<0$ and $\alpha(s_0)\alpha(\overline{s}_0)>0$  i.e. $\alpha(s_0)\leq\alpha(\overline{s}_0)$.  In a similar way we can prove the other inequality of the Claim.
	
	Let $\displaystyle (s_{k})_{k \in \N}\subset (\omega_{-},\overline{s})$ be such that $\tau(s_{k})=0$ for all $k \in \N$. It follows from our Claim that the local maximum values of $\alpha(s)$ and the local minimum values of $\eta(s)$ are bounded i.e. $0<\alpha(s_{2k})\leq\alpha(s_{2}) $ and $0>\eta(s_{2k+1})\geq \eta(s_{1}) $ for all $k \in \N$. Moreover, since $\alpha(s_{2k+3})\leq \alpha(s_{2k+1})<0$ then there exists $t_{2k}\in  [s_{2k+1},s_{2k}]$ such that $\displaystyle \alpha(t_{2k})=\alpha(s_1)$ for all $k \in \N$. Since $\tau(s)$ is also bounded on $[s_{2k+1},s_{2k}]$ it follows that  $\tau(t_{2k})$ is bounded for all $k \in \N$. Hence, it follows from $2\alpha(s)\eta(s)+\tau^{2}(s)=1$ that $\eta(t_{2k})$ is also bounded for all $k \in \N$.  Therefore, the sequence $(g(t_{2k}))_{k \in N}$ is monotone and bounded, and consequently the function $g(s)$ is bounded on $(\omega_{-},\overline{s})$ i.e. there exists $M \in \R^{+}$ such that $|g(s)|\leq M$ for all $s<\overline{s}$. Thus, 
	\begin{equation*}\left\{\begin{array}{l}
	\displaystyle |c\alpha(s_{2k+1})|\leq |g(s_{2k+1})| +|\eta(s_{2k+1})|\leq |\eta(s_1)|+M,\\
	\displaystyle |\eta(s_{2k})|\leq |g(s_{2k})| +|c\alpha(s_{2k})|\leq M+|c|\alpha(s_{2})	
	\end{array}\right.
	\end{equation*}
	for all $k \in \N$ i.e. the local minimum values of $\alpha(s)$ and the local maximum values of $\eta(s)$ are also bounded. Therefore, $\alpha(s)$ and $\eta(s)$ are bounded on $(\omega_{-},\overline{s})$, and it follows from $2\alpha(s)\eta(s)+\tau^{2}(s)=1$ that $\tau(s)$ is also bounded. Using similar arguments we can prove that $\alpha(s)$, $\tau(s)$ and $\eta(s)$ are bounded on $(\overline{s},\omega_{+})$.	
\end{proof}

\begin{lemma}\label{ln6}
	Let $\psi(s)=(\alpha(s),\tau(s),\eta(s))$ be a non trivial solution of (\ref{s2}) defined on the maximal interval $I=(\omega_{-}, \omega_{+})$, $a>0$, $c \in \R$ and initial condition $\psi(0)\in H\cup C\cup S$.
	\begin{itemize}
		\item[i)] If $c\geq0$, then there exist $s_1,s_2 \in I$, $s_1\leq s_2$,  such that the functions $\alpha(s),$ $\tau(s)$ and $\eta(s)$ are monotone on the intervals $(\omega_{-},s_1)$ and $(s_2,\omega_{+}).$
		\item[ii)] If $c<0$ and at least one the functions $\alpha(s)$, $\tau(s)$ and $\eta(s)$ is unbounded on $(\omega_{-}, \overline{s})$ (resp. $(\overline{s},\omega_{+})$) with $\overline{s} \in I$ fixed, then there exists $s_1 \in (\omega_{-}, \overline{s})$ (resp. $s_2 \in (\overline{s},\omega_{+})$) such that the functions $\alpha(s)$, $\tau(s)$ and $\eta(s)$ are monotone on $(\omega_{-}, s_1)$ (resp. $(s_2,\omega_{+})$).
	\end{itemize}	
\end{lemma}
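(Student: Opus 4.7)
The plan is to reduce the lemma to counting zeros of $\tau(s)$ and of $c+a\tau(s)$, which are already controlled by Lemmas \ref{ln4} and \ref{ln5}. Since $\alpha'=\tau$, the critical points of $\alpha$ are exactly the zeros of $\tau$; since $\eta'=-[c+a\tau]\tau$, the critical points of $\eta$ are the zeros of $\tau$ together with those of $c+a\tau$. Therefore, finiteness of these two zero sets on a given subinterval immediately implies finiteness of the critical sets of $\alpha$ and of $\eta$ on that subinterval, and the remaining task is to do the same for $\tau$ itself.

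The critical points of $\tau$ will be handled via Lemma \ref{ln3}, which asserts that every such point is a strict local extremum whose type is distinguished by the sign of $\tau(s_0)[c+a\tau(s_0)]$. Because these critical points are isolated (via $\tau''\ne 0$), any two consecutive ones $s_0<s_1$ must alternate between a local minimum and a local maximum, since otherwise $\tau'$ would have to change sign on $(s_0,s_1)$ and produce an additional critical point in the gap. Hence the sign of the continuous function $\tau(s)[c+a\tau(s)]$ must change on $(s_0,s_1)$, which by the intermediate value theorem forces a zero of $\tau$ or of $c+a\tau$ there. Consequently the number of critical points of $\tau$ on any subinterval is bounded by one plus the total number of zeros of $\tau$ and of $c+a\tau$ on that subinterval.

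To prove (i), I assume $c\ge 0$. Lemma \ref{ln4}(i) and (iii) bound the zeros of $\tau$ and of $c+a\tau$ in $I$ by two each, so by the preceding paragraph $\alpha$, $\tau$, $\eta$ together have only finitely many critical points in $I$. Choosing $s_1\le s_2$ in $I$ to lie, respectively, below and above all of these critical points yields the desired monotonicity on $(\omega_-,s_1)$ and $(s_2,\omega_+)$. To prove (ii), I assume $c<0$ and that one of $\alpha,\tau,\eta$ is unbounded on $(\omega_-,\overline{s})$. The contrapositive of Lemma \ref{ln5} produces finitely many zeros of $\tau$ in $(\omega_-,\overline{s})$, and Lemma \ref{ln4}(iii) still bounds the zeros of $c+a\tau$ in $I$ by two; the alternation argument then gives finitely many critical points of $\alpha,\tau,\eta$ on $(\omega_-,\overline{s})$. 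Selecting $s_1$ less than all of them produces the monotonicity interval $(\omega_-,s_1)$, and the case $(\overline{s},\omega_+)$ follows by the symmetric argument.

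The step requiring the most care is the counting of critical points of $\tau$, since $\tau$ is not directly the derivative of an elementary expression in the state variables. Lemma \ref{ln3} is the right tool: its sign characterization of local extrema converts the counting problem for $\tau$ into a counting problem for sign changes of $\tau(c+a\tau)$, which in turn reduces to the already-established finiteness of zeros of $\tau$ and of $c+a\tau$.
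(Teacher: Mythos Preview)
Your proof is correct and follows the same overall strategy as the paper: reduce monotonicity to finiteness of the critical sets by invoking Lemmas~\ref{ln4} and~\ref{ln5} to bound the zeros of $\tau$ and of $c+a\tau$, which directly controls the critical points of $\alpha$ and $\eta$, and then handle $\tau$ separately via Lemma~\ref{ln3}. The one place where your argument differs slightly is in the bookkeeping for the critical points of $\tau$: you use a uniform alternation argument (consecutive extrema of $\tau$ alternate type, forcing a sign change of $\tau[c+a\tau]$ in the gap and hence a zero of $\tau$ or of $c+a\tau$ there), whereas the paper, in part~(i), argues by contradiction that four consecutive critical points of $\tau$ would force three zeros of $c+a\tau$, and in part~(ii) implicitly falls back on the biconditional of Lemma~\ref{ln4}(ii). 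Your version has the advantage of treating both cases with the same mechanism and yielding an explicit bound (\,$\#\{\tau'=0\}\le 1+\#\{\tau=0\}+\#\{c+a\tau=0\}$\,), while the paper's version gives the sharper count ``at most three'' in case~(i); either way the conclusion is the same.
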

\begin{proof}
	\it i) \rm 
If $c\geq 0$, it follows from Lemma \ref{ln4} that 		
		 $\tau(s)$ and $c+a\tau(s)$ have at  most two zeros. Therefore, it follows from \eqref{s2} that $\alpha(s)$ has at  most two critical points and $\eta(s)$ has at  most four critical points. We claim that $\tau(s)$ has at most three critical points. In fact, assume by contradiction that $s_1,s_2,s_3$ and $s_4$ are four consecutive critical points of $\tau(s)$ and $\tau^{\prime}(s)\neq 0$ for all $s \in (s_1,s_2)\cup (s_2,s_3)\cup (s_3,s_4)$. If $\tau(s)>0$, for $s\in J\subset I$, since $[c+\tau(s)]\tau(s)>0$, it follows from Lemma \ref{ln3} that $\tau(s)$ has no local maximum in $J$. Hence, $\tau(s)<0$ for $s \in [s_1,s_4]$ and  Lemma \ref{ln3} implies that $c+a\tau(s_{i})>0$ when $s_i$ is a local maximum point of $\tau(s)$ and $c+a\tau(s_{i})<0$ when $s_i$ is a local minimum point of $\tau(s)$, where $i \in \{1,2,3,4\}$.  However, this implies that $c+a\tau(s)$ has three zeros, which  contradicts the fact that $c+a\tau(s)$ have at  most two zeros. Hence, $\tau(s)$ has at  most three critical points. Therefore, there exist $s_1,s_2 \in I$, $s_1\leq s_2$  such that $\alpha(s),$ $\tau(s)$ and $\eta(s)$ are monotone on the intervals $(\omega_{-},s_1)$ and $(s_2,\omega_{+}).$ 
	
	\it ii) \rm If $c<0$, it follows from Lemma \ref{ln5} that, when one of the functions $\alpha(s)$, $\tau(s)$ and $\eta(s)$ is unbounded on $(\omega_{-}, \overline{s})$ (resp. $(\overline{s},\omega_{+})$) then $\tau(s)$ has at most a finite number of zeros on $(\omega_{-}, \overline{s})$ (resp. $(\overline{s},\omega_{+})$). 
Hence, $\alpha$ and  $\eta$ have a finite number of critical points.
 	
 Therefore, there exists $s_1 \in (\omega_{-}, \overline{s})$ (resp. $s_2 \in (\overline{s},\omega_{+})$) such that the functions $\alpha(s)$, $\tau(s)$ and $\eta(s)$ are monotone on $(\omega_{-}, s_1)$ (resp. $(s_2,\omega_{+})$).  
\end{proof}

\begin{lemma}\label{c41}
	Let $\psi(s)=(\alpha(s),\tau(s),\eta(s))$ be a non trivial solution of (\ref{s2}) defined on the maximal interval $I=(\omega_{-}, \omega_{+})$, $a>0$, $c \in \R$ and initial condition $\psi(0)\in H\cup C\cup S$. Let $\overline{s} \in I$.
	\item[i)]If $\alpha(s)$ is bounded on $(\omega_{-},\overline{s})$ (resp. $(\overline{s}, \omega_{+})$), then the functions $\tau(s)$ and $\eta(s)$ are bounded on $(\omega_{-},\overline{s})$ and $\omega_{-}=-\infty$ (resp. $(\overline{s}, \omega_{+})$ and $\omega_{+}=+\infty$).
	\item[ii)]If $\eta(s)$ is bounded on $(\omega_{-},\overline{s})$ (resp. $(\overline{s}, \omega_{+})$), then $\tau(s)$ is bounded on $(\omega_{-},\overline{s})$ and $\omega_{-}=-\infty$ (resp. $(\overline{s}, \omega_{+})$ and $\omega_{+}=+\infty$).
\end{lemma}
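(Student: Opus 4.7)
The approach is to argue by contradiction in each part, combined with the standard maximal extension theorem for the polynomial ODE system \eqref{s2}: if the solution $\psi(s)=(\alpha(s),\tau(s),\eta(s))$ stays in a bounded set on $(\omega_{-},\overline{s})$, then $\omega_{-}=-\infty$. Thus, in part \emph{i)} it suffices to show that $\alpha$ bounded forces $\tau$ and $\eta$ bounded, and in part \emph{ii)} that $\eta$ bounded forces $\tau$ bounded; the equality $\omega_{-}=-\infty$ then follows. The symmetric statements on $(\overline{s},\omega_{+})$ are entirely analogous.

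For part \emph{i)}, assume $\alpha$ is bounded on $(\omega_{-},\overline{s})$ but $\tau$ is unbounded. By Lemma \ref{ln6}, on some subinterval $(\omega_{-},s_{1})$ the functions $\alpha,\tau,\eta$ are monotone, so $\alpha(s)\to L_{\alpha}\in\R$ and $\tau(s)\to\pm\infty$ as $s\to\omega_{-}^{+}$. Rewrite \eqref{novo1} as $\alpha\tau^{\prime}=(c+a\tau)\alpha^{2}+(\tau^{2}-\gamma)/2$. For $|\tau|$ large the dominant term on the right is $\tau^{2}/2$, whence $\alpha\tau^{\prime}\sim\tau^{2}/2$. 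Combined with $\alpha^{\prime}=\tau$, this yields the asymptotic ODE $\tau^{\prime}\sim\tau^{2}/(2L_{\alpha})$ when $L_{\alpha}\neq 0$, whose integration gives $\tau(s)\sim -2L_{\alpha}/(s-\omega_{-})$. Then $\int_{\omega_{-}}^{s_{1}}\tau\,du$ diverges logarithmically, forcing $\alpha(s)=\alpha(s_{1})-\int_{s}^{s_{1}}\tau\,du$ to be unbounded, a contradiction. The degenerate case $L_{\alpha}=0$ is excluded by substituting the ansatz $\tau(s)\sim C(s-\omega_{-})^{-\beta}$ into \eqref{novo1} and $\alpha^{\prime}=\tau$: the leading-order balance forces $\beta=1$ and consequently $L_{\alpha}\neq 0$, contradicting the assumption. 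Hence $\tau$ is bounded. The constraint $\eta=(\gamma-\tau^{2})/(2\alpha)$ then bounds $\eta$ wherever $\alpha$ stays away from $0$; the possibility $\alpha\to 0$ is ruled out by using the monotonicity of $g(s)=c\alpha(s)+\eta(s)$, which satisfies $g^{\prime}(s)=-a\tau^{2}(s)\le 0$, together with the sign conditions in each class $H$, $C$, $S$ (for instance, on $H$ the relation $\alpha\eta\le -1/2$ would force $\eta\to+\infty$ if $\alpha\to 0$, which the preceding asymptotic analysis already precludes). With $\alpha,\tau,\eta$ all bounded, the extension theorem gives $\omega_{-}=-\infty$.

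For part \emph{ii)}, assume $\eta$ is bounded on $(\omega_{-},\overline{s})$ but $\tau$ is unbounded. Lemma \ref{ln6} again yields monotonicity on a subinterval $(\omega_{-},s_{1})$, so $\eta(s)\to L_{\eta}$ and $\tau(s)\to\pm\infty$. The invariant $\tau^{2}=\gamma-2\alpha\eta$ forces $\alpha\sim -\tau^{2}/(2L_{\eta})$ when $L_{\eta}\neq 0$, hence $|\alpha|\to\infty$. Substituting into $\alpha^{\prime}=\tau$ gives $\tau\sim -L_{\eta}s+\mathrm{const}$, i.e.\ linear growth in $s$. But then \eqref{novo1} has $\alpha\tau^{\prime}=O(\tau^{2})$ while $(c+a\tau)\alpha^{2}=O(\tau^{5})$, and these two orders cannot balance, a contradiction. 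The case $L_{\eta}=0$ is treated by an analogous power-law matching. Hence $\tau$ is bounded, and since $\alpha^{\prime}=\tau$ with $\tau$ bounded, $\alpha$ grows at most linearly in $|s-s_{1}|$; consequently $\omega_{-}=-\infty$ follows either via the extension theorem (when $\alpha$ is bounded) or directly from the linear growth (when $\alpha$ is unbounded).

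The main obstacle is the asymptotic analysis in the degenerate cases $L_{\alpha}=0$ and $L_{\eta}=0$, where the elementary leading-order balance fails; one must match several orders simultaneously using \eqref{novo1}, \eqref{novo2}, and the invariant $2\alpha\eta+\tau^{2}=\gamma$, while also keeping track of the sign conditions that characterize $H$, $C$, $S$.
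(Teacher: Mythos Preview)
The overall strategy—contradiction plus the maximal extension theorem, invoking Lemma \ref{ln6} for eventual monotonicity near $\omega_{-}$—matches the paper's. However, the asymptotic arguments you substitute for the paper's computations have genuine gaps.

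In part \emph{i)}, the chain ``$\alpha\tau'\sim\tau^{2}/2$ and $\alpha\to L_{\alpha}\neq 0$, hence $\tau'\sim\tau^{2}/(2L_{\alpha})$, hence $\tau\sim -2L_{\alpha}/(s-\omega_{-})$, hence $\int\tau$ diverges logarithmically'' is heuristic: converting an asymptotic relation for $\tau'$ into a blow-up profile for $\tau$, and then into divergence of $\int\tau$, requires error control you have not supplied. More seriously, the degenerate case $L_{\alpha}=0$ cannot be dismissed by ``substituting the ansatz $\tau\sim C(s-\omega_{-})^{-\beta}$'': nothing guarantees power-law behaviour of $\tau$, so the matching is vacuous. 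The paper avoids this by splitting cleanly. If $\tau\to-\infty$, \eqref{novo1} rearranges to $\alpha^{2}>(\gamma-\tau^{2})/(c+a\tau)\to+\infty$, a one-line contradiction. If $\tau\to+\infty$, one first observes $\eta'=-[c+a\tau]\tau<0$ eventually, so $\eta\to+\infty$ and $\alpha<0$; then L'Hospital applied to $\tau^{2}/(2\eta)$ via the identity \eqref{eb1} gives $\tau^{2}/(2\eta)\to+\infty$, whence $\alpha=\gamma/(2\eta)-\tau^{2}/(2\eta)\to-\infty$.

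Your argument that $\eta$ is bounded once $\tau$ is bounded is also incomplete. The formula $\eta=(\gamma-\tau^{2})/(2\alpha)$ gives no control where $\alpha$ approaches $0$ (which is not excluded in $C$ or $S$, nor a priori in $H$), and invoking ``the preceding asymptotic analysis'' to rule out $\alpha\to 0$ is circular, since that analysis assumed $\tau$ unbounded. The paper's argument is different and clean: after showing $\tau$ bounded and $\omega_{-}=-\infty$, if $\eta$ were unbounded then by monotonicity $\eta\to\pm\infty$, so $\tau'=(c+a\tau)\alpha-\eta\to\mp\infty$; but $\int_{-\infty}^{\bar s}\tau'\,du=\tau(\bar s)-\lim_{s\to-\infty}\tau(s)$ is finite since $\tau$ is bounded and monotone—contradiction. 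In part \emph{ii)} the paper likewise uses L'Hospital on $\tau^{2}/(2\alpha)$ rather than an order-of-magnitude balance in \eqref{novo1}, and the case $L_{\eta}=0$ (which you again defer to ``analogous power-law matching'') is handled automatically.
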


\begin{proof}
	Note that, if $\alpha(s)\neq 0$, $\eta(s)\neq 0$ and  $[c+a\tau(s)]\tau(s)\neq 0$, since $\psi(s)\in H\cup C\cup S$, we have   $2\alpha(s)\eta(s)+\tau^{2}(s)=\gamma$, where $\gamma \in \{-1,0,1\}$ and  
	it follows from \eqref{s2} that 
	\begin{equation}\label{eb1}
	\frac{\displaystyle{\frac{d}{ds}}\left(\tau^{2}(s)\right)}{\displaystyle{\frac{d}{ds}}\left(2\eta(s)\right)}=-\alpha(s)+\frac{\gamma}{2\alpha(s)[c+a\tau(s)]}-\frac{\tau(s)}{2\alpha(s)\left[\displaystyle{\frac{c}{\tau(s)}}+a\right]}.
	\end{equation}
	
	\it i) \rm We will only consider the interval $(\omega_{-},\overline{s})$, since the proof for the interval $(\overline{s},\omega_{+})$ follows from similar arguments.  Suppose that $\alpha(s)$ is bounded on $(\omega_{-},\overline{s})$. Assume by contradiction that $\tau(s)$ is unbounded on $(\omega_{-},\overline{s}).$ It follows from Lemma \ref{ln6} that  there exists $s_1\in (\omega_{-},\overline{s})$ such that $\alpha(s)$, $\tau(s)$ and $\eta(s)$ are monotone on $(\omega_{-},s_1)$. Thus,    $$\displaystyle \lim_{s \to \omega_{-}}\tau(s)=\pm \infty, \hspace{0.3 cm} \displaystyle \lim_{s \to \omega_{-}}\tau(s)[c+a\tau(s)]=+ \infty \hspace{0.3 cm} \text{and}\hspace{0.3 cm} \eta^{\prime}(s)=-[c+a\tau(s)]\tau(s)<0$$ 
	for all $s<s_1$. Hence, it follows from $2\alpha(s)\eta(s)+\tau^{2}(s)=\gamma$ that $\eta(s)$ is also unbounded, $ \lim_{s \to \omega_{-}}\eta(s)=+ \infty$ and $ \lim_{s \to \omega_{-}}\eta(s)\alpha(s)=- \infty$. Moreover, $s_1$ can be chosen such that $\alpha(s)<0$ for all $s<s_1.$ 
	
	Suppose that $ \lim_{s \to \omega_{-}}\tau(s)=- \infty$, then there exists  $s_1$ such that $\tau^{\prime}(s)>0$, $\gamma-\tau^{2}(s)<0$  and $c+a\tau(s)<0$ for all $s<s_1$, where $\gamma \in \{-1,0,1\}.$ Thus, $\alpha(s)\tau^{\prime}(s)<0$ and \eqref{novo1} implies that
	$$
	\alpha(s)\tau^{\prime}(s)=[c+a\tau(s)]\alpha^{2}(s)-\frac{\gamma}{2}+\frac{\tau^{2}(s)}{2}<0 \,\,\,\,\,\, \text{i.e.}\,\,\,\,\,\, \alpha^{2}(s)>\frac{\gamma-\tau^{2}(s)}{c+a\tau(s)}>0
	$$
	for all $s<\overline{s}_1$, this is a contradiction, we assumed that $ \lim_{s \to \omega_{-}}\tau(s)=- \infty$ and $\alpha(s)$ is bounded. 
	
	Suppose that $ \lim_{s \to \omega_{-}}\tau(s)=+ \infty$. Then there exists $s_1\in (\omega_{-},\overline{s})$ such that $\alpha(s)$ is increasing and negative on $(\omega_{-},s_1)$. Since that $\alpha(s)$ is bounded, we have $ \lim_{s \to \omega_{-}}\alpha(s)=L$, where $L \in \R^{-}$ and $ \lim_{s \to \omega_{-}}\alpha(s)[c+a\tau(s)]=-\infty$. Thus, it follows from \eqref{eb1} and L'Hospital rule that 
	$$\displaystyle \lim_{s \to \omega_{-}} \frac{\displaystyle{\frac{d}{ds}}\left(\tau^{2}(s)\right)}{\displaystyle{\frac{d}{ds}}\left(2\eta(s)\right)}=+\infty \,\,\,\, \text{and}\,\,\,\,\displaystyle \lim_{s \to \omega_{-}} \frac{\tau^{2}(s)}{2\eta(s)}=\displaystyle \lim_{s \to \omega_{-}} \frac{\displaystyle{\frac{d}{ds}}\left(\tau^{2}(s)\right)}{\displaystyle{\frac{d}{ds}}\left(2\eta(s)\right)}=+\infty.$$
	Since  $2\alpha(s)\eta(s)+\tau^{2}(s)=\gamma$, then 
	$\lim_{s \to \omega_{-}} \alpha(s)=\lim_{s\to\omega_{-}}[\gamma/(2\eta(s))-\tau^{2}/(2\eta(s))]=-\infty$. This  contradicts that $\alpha(s)$ is bounded on $(\omega_{-},\overline{s})$.  Therefore, $\tau(s)$ is bounded on $(\omega_{-},\overline{s}).$	
	
	Let $M>0$ be such that $|\tau(s)|<M$, $s\in (\omega_{-},\overline{s})$. It follows from (\ref{s2})  that
	\begin{equation*}
	|\eta(\overline{s})-\eta(s)|=\left| -\int^{\overline{s}}_{s}[c+\tau(u)]\tau(u)du \right|<\left(|c|M+aM^{2}\right)(\overline{s}-s)
	\end{equation*}
	for each $s<\overline{s}$. Since $I$ is a maximal interval we conclude that the solution $\psi(s)$ of (\ref{s2}) leaves any compact subset of $(\omega_{-},\overline{s}) \times \R^{3}$, then $\omega_{-}=-\infty$.  
	
	We will now proof that $\eta(s)$ is bounded. Assume by contradiction that $\eta(s)$ is unbounded. It follows from Lemma \ref{ln6} that there exists  $s_1$ such that the functions $\alpha(s)$, $\tau(s)$ are $\eta(s)$ are monotone on $(-\infty,s_1)$ and $ \lim_{s \to -\infty}\eta(s)=\pm \infty$. Moreover, the limits $\lim_{s \to -\infty}\alpha(s)$ and $ \lim_{s \to -\infty}\tau(s)$ exist. Hence, the improper integral  $$\int^{\overline{s}}_{-\infty}\tau^{\prime}(u)du=\tau(\overline{s})-\lim_{s \to -\infty}\tau(s)$$ is convergent, which  is a contradiction, since $ \lim_{s \to -\infty} [c+a\tau(s)]\alpha(s)-\eta(s)=\mp \infty$. Therefore, $\eta(s)$ is also bounded on $(-\infty,\overline{s})$.
	
	\it ii) \rm We will give the proof only for the interval $(\omega_{-},\overline{s})$, since the proof for the interval $(\overline{s},\omega_{+})$ follows by similar arguments. Let $\eta(s)$ be bounded on $(\omega_{-},\overline{s})$. Assume by contradiction that $\tau(s)$ is unbounded on $(\omega_{-},\overline{s})$. It follows from Lemma \ref{ln6} that there exists $s_1\in (\omega_{-},\overline{s})$ such that $\alpha(s)$, $\tau(s)$ and $\eta(s)$ are monotone on $(\omega_{-},s_1)$ and $ \lim_{s \to \omega_{-}}\tau(s)=\pm \infty$. Since $2\alpha(s)\eta(s)+\tau^{2}(s)=\gamma$,   we have that $\alpha(s)$ is also unbounded, $ \lim_{s \to \omega_{-}}\alpha(s)=\pm \infty$ and $ \lim_{s \to \omega_{-}}\tau^{\prime}(s)= \lim_{s \to \omega_{-}}[c+a\tau(s)]\alpha(s)-\eta(s)=\pm \infty.$ Using  L'Hospital rule we obtain
	$$\displaystyle \lim_{s \to \omega_{-}} \frac{\tau^{2}(s)}{2\alpha(s)}=\lim_{s \to \omega_{-}} \frac{2\tau(s)\tau^{\prime}(s)}{2\tau(s)}=\lim_{s \to \omega_{-}}\tau^{\prime}(s)=\pm \infty.$$
	Hence, 
	$ \lim_{s \to \omega_{-}} \eta(s)=\lim_{s \to \omega_{-}} [\gamma/{(2\alpha(s))}-\tau^{2}{(2\alpha(s))}]=\mp\infty$, which 
contradicts the hypothesis thet $\eta(s)$ is bounded on $(\omega_{-},\overline{s})$.  Therefore, $\tau(s)$ is bounded on $(\omega_{-},\overline{s}).$ Let $M>0$ be such that $|\tau(s)|<M$ for all $s\in (\omega_{-},\overline{s})$. From (\ref{s2}) we obtain
	\begin{equation*}
	|\alpha(\overline{s})-\alpha(s)|=\left| \int^{\overline{s}}_{s}\tau(u)du \right|<M(\overline{s}-s)
	\end{equation*}
	for each $s<\overline{s}$. Since $I$ is a maximal interval, the solution $\psi(s)$ of (\ref{s2}) leaves any compact subset of $(\omega_{-},\overline{s}) \times \R^{3}$. Hence $\omega_{-}=-\infty$. 			\end{proof}

\begin{lemma} \label{g2}
	Let $\psi(s)=(\alpha(s),\tau(s),\eta(s))$ be a non trivial solution of (\ref{s2}) defined on the maximal interval $I=(\omega_{-}, \omega_{+})$, $a>0$, $c \in \R$ and initial condition $\psi(0)\in H\cup C\cup S$. If $\alpha(s)$ is bounded on the interval $(\omega_{-},\overline{s})$ (resp. $(\overline{s},\omega_{+})$), $\overline{s} \in I$ fixed, then $\omega_{-}=-\infty$, $ \lim_{s \to -\infty}\tau(s)=0$ and $ \lim_{s \to -\infty}\tau^{\prime}(s)=0$ (resp. $\omega_{+}=+\infty$, $ \lim_{s \to +\infty}\tau(s)=0$ and $ \lim_{s \to +\infty}\tau^{\prime}(s)=0.$)
\end{lemma}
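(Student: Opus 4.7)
My proof plan is to combine Lemma \ref{c41}(i) with a Lyapunov-type monotone quantity and two standard real-analysis lemmas (an integrability-plus-uniform-continuity argument and the ``$f\to 0$ with $f''$ bounded implies $f'\to 0$'' principle). I treat only the interval $(\omega_-,\overline{s})$; the other case is symmetric.

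First, since $\alpha(s)$ is bounded on $(\omega_-,\overline{s})$, Lemma \ref{c41}(i) immediately gives that $\tau(s)$ and $\eta(s)$ are bounded there and $\omega_-=-\infty$. With $\alpha,\tau,\eta$ all bounded on $(-\infty,\overline{s})$, the system \eqref{s2} shows that $\tau'(s)=[c+a\tau(s)]\alpha(s)-\eta(s)$ is bounded, and differentiating once more yields $\tau''(s)=a\alpha(s)\tau'(s)+2[c+a\tau(s)]\tau(s)$, which is also bounded.

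Next I would introduce the monotone quantity $g(s):=c\alpha(s)+\eta(s)$. Direct computation from \eqref{s2} gives
\begin{equation*}
g'(s)=c\tau(s)-[c+a\tau(s)]\tau(s)=-a\tau^{2}(s)\leq 0,
\end{equation*}
so $g$ is non-increasing on $(-\infty,\overline{s})$, and because $\alpha,\eta$ are bounded, $g$ is bounded, hence has a finite limit $g_{\infty}$ at $-\infty$. Integrating $g'$ yields
\begin{equation*}
a\into_{-\infty}^{\overline{s}}\tau^{2}(u)\,\ud u=g_{\infty}-g(\overline{s})<+\infty.
\end{equation*}

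The heart of the argument is then to upgrade $L^{2}$-integrability of $\tau$ to the pointwise limit $\tau(s)\to 0$. Since $\tau$ is Lipschitz on $(-\infty,\overline{s})$ (as $\tau'$ is bounded), the function $\tau^{2}$ is uniformly continuous. A standard argument (contradiction via picking a sequence $s_{k}\to-\infty$ with $|\tau(s_{k})|\geq\delta$ and using uniform continuity to produce a uniform lower bound on $\tau^{2}$ on disjoint intervals around the $s_{k}$, violating integrability) then forces $\tau(s)\to 0$ as $s\to-\infty$.

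Finally, to obtain $\tau'(s)\to 0$, I invoke the elementary fact that if $f\colon(-\infty,\overline{s}]\to\R$ satisfies $f(s)\to 0$ at $-\infty$ and $f''$ is bounded, then $f'(s)\to 0$ at $-\infty$; this follows from Taylor's formula $f(s+h)=f(s)+hf'(s)+O(h^{2})$ by choosing, for each prescribed $\varepsilon$, an appropriate $h$ and then $s$ sufficiently negative. Since we have already shown $\tau\to 0$ and $\tau''$ bounded, this yields $\tau'(s)\to 0$. The main conceptual obstacle is the step $\tau\to 0$, because without the integrability of $\tau^{2}$ one might a priori have oscillation; the Lyapunov function $g$ is what makes this step work, and it is also the only place where the specific algebraic form of \eqref{s2} enters nontrivially.
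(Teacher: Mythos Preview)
Your proof is correct and follows essentially the same route as the paper: both invoke Lemma \ref{c41}(i) for boundedness and $\omega_-=-\infty$, then use the monotone quantity $g=c\alpha+\eta$ with $g'=-a\tau^{2}$ together with a Barbalat-type argument to get $\tau\to 0$, and finally a second Barbalat/Taylor argument (boundedness of $\tau''$) to obtain $\tau'\to 0$. The only difference is packaging: the paper cites Barbalat's Lemma by name, while you spell out the integrability-plus-uniform-continuity and the $f\to0,\ f''$ bounded $\Rightarrow f'\to0$ arguments explicitly.
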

\begin{proof} We will only consider the interval $(\omega_{-},\overline{s})$, since the proof for the interval $(\overline{s},\omega_{+})$ follows with  similar arguments. We define the function $g(s)=c\alpha(s)+\eta(s)$, $s \in I$.  It follows from \eqref{s2} that  $g^{\prime}(s)=-a\tau^{2}(s)$ and $g(s)$ is decreasing for all $s \in I$. 
	
 Suppose that $\alpha(s)$ is bounded on $(\omega_{-},\overline{s})$, $\overline{s} \in I$ fixed, then it follows from item \it i) \rm of  Lemma \ref{c41} that $\eta(s)$ and $\tau(s)$ are bounded on $(\omega_{-},\overline{s})$ and $\omega_{-}=-\infty$. Thus, the function $g(s)$ is decreasing and bounded on $(-\infty,\overline{s})$ and $ \lim_{s \to -\infty}g(s)$ exists. Note that $g^{\prime}(s)$ is uniformly continuous on $(-\infty,\overline{s})$, since  $g^{\prime \prime}(s)=2a\tau(s)[c+a\tau(s)]\alpha(s)-\eta(s)$ is bounded on $(-\infty,\overline{s})$. Hence, it follows from Barbalat's Lemma that $ \lim_{s \to -\infty}g^{\prime}(s)=0$. Since $\displaystyle g^{\prime}(s)=-a\tau^{2}(s)$,  we conclude that $ \lim_{s \to -\infty}\tau(s)=0$. Moreover, it follows from  \eqref{s2} that 
 $\tau^{\prime \prime}(s)$ is also bounded on $(-\infty,\overline{s})$ i.e. $\tau^{\prime}(s)$ is uniformly continuous on $(-\infty, \overline{s})$. Since $ \lim_{s \to -\infty}\tau(s)=0$ then it follows from Barbalat's Lemma that $ \lim_{s \to -\infty}\tau^{\prime}(s)=0.$
\end{proof}

\begin{lemma} \label{g3}
	Let $\psi(s)=(\alpha(s),\tau(s),\eta(s))$ be a non trivial solution of (\ref{s2}) defined on the maximal interval $I=(\omega_{-}, \omega_{+})$, $a>0$, $c \in \R$ and initial condition $\psi(0)\in H\cup C\cup S$. Let $\overline{s} \in I$ be fixed. If $\alpha(s)$ is unbounded on $(\omega_{-},\overline{s})$ , then either $\lim_{s \to \omega_{-}}|\eta(s)|=\lim_{s \to \omega_{-}}|c+a\tau(s)|=+\infty$ or $ \lim_{s \to -\infty}\eta(s)=\lim_{s \to -\infty}c+a\tau(s)=0$. If $\alpha(s)$ is unbounded on $(\overline{s},\omega_{+})$, then 	
	 either $\lim_{s \to \omega_{+}}|\eta(s)|=\lim_{s \to \omega_{+}}|c+a\tau(s)|=+\infty$ or $\lim_{s \to +\infty}\eta(s)= \lim_{s \to +\infty}c+a\tau(s)=0$.
\end{lemma}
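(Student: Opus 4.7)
The plan is to handle only the left endpoint $\omega_-$, since the argument at $\omega_+$ is entirely analogous with reversed monotonicity directions. Assume $\alpha$ is unbounded on $(\omega_-,\bar s)$. By Lemma \ref{ln6} (part i) if $c\geq 0$, part ii) if $c<0$, using that $\alpha$ is unbounded), there exists $s_1\in(\omega_-,\bar s)$ on which $\alpha,\tau,\eta$ are all monotone, so each admits a limit in $[-\infty,+\infty]$ as $s\to\omega_-$, with $|\alpha|\to+\infty$ by hypothesis. I would then split into two cases according to whether $\tau_\infty:=\lim_{s\to\omega_-}\tau(s)$ is finite or infinite.

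Suppose first that $\tau_\infty$ is finite. Since $\tau$ is bounded and $\alpha'=\tau$, if $\omega_->-\infty$ were finite then $\alpha$ would be Lipschitz and hence bounded on $(\omega_-,\bar s)$, contradicting the hypothesis; thus $\omega_-=-\infty$. The first integral $2\alpha\eta+\tau^2=\gamma\in\{-1,0,1\}$ combined with $|\alpha|\to+\infty$ and $\tau$ bounded forces $\eta\to 0$. To show $c+a\tau_\infty=0$, I argue by contradiction: if $c+a\tau_\infty\neq 0$ then the middle equation of \eqref{s2} yields $\tau'(s)=[c+a\tau(s)]\alpha(s)-\eta(s)\to\pm\infty$ as $s\to -\infty$, but a derivative with infinite limit at $-\infty$ forces the function itself to diverge by direct integration, contradicting that $\tau_\infty$ is finite. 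Hence $\lim_{s\to-\infty}\eta=\lim_{s\to-\infty}(c+a\tau)=0$, the second alternative.

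Now suppose $\tau_\infty=\pm\infty$. Then $|c+a\tau|\to+\infty$ at once. Since $\tau$ is unbounded on $(\omega_-,\bar s)$, Lemma \ref{c41} ii) (which would force $\tau$ bounded from $\eta$ bounded) implies that $\eta$ is also unbounded there; combined with the monotonicity from Lemma \ref{ln6}, $\eta$ tends to $+\infty$ or $-\infty$ as $s\to\omega_-$. To pin down the sign I would compute $\eta'(s)=-[c+a\tau(s)]\tau(s)=-a\tau^2(s)-c\tau(s)\to-\infty$, which means $\eta$ is eventually strictly decreasing in $s$, hence strictly increasing as $s\to\omega_-^+$, so $\eta\to+\infty$. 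This establishes the first alternative.

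The main obstacle is the Case 1 step of ruling out $c+a\tau_\infty\neq 0$: one must combine the conserved quantity $2\alpha\eta+\tau^2=\gamma$ (to force $\eta\to 0$) with the $\tau'$ equation (to force $c+a\tau_\infty=0$), using the elementary observation that a divergent derivative cannot coexist with a convergent function over an infinite interval. The remainder of the proof is careful bookkeeping of signs and which earlier lemma applies where, which becomes routine once the dichotomy on $\tau_\infty$ is set up.
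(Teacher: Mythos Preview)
Your argument is correct. The core ingredients---monotonicity from Lemma~\ref{ln6}, the first integral $2\alpha\eta+\tau^2=\gamma$, Lemma~\ref{c41}, and the contradiction between a divergent derivative and a convergent function over an infinite ray---are exactly what the paper uses, but you organize them around a different dichotomy. The paper splits on whether $\eta$ is bounded: if so, Lemma~\ref{c41}~ii) gives $\tau$ bounded and $\omega_-=-\infty$, then $\lim\eta'=0$ forces $\tau_\infty(c+a\tau_\infty)=0$, and a separate step rules out $\tau_\infty=0$ when $c\neq 0$; if $\eta$ is unbounded, the first integral makes $\tau$ unbounded. You instead split on whether $\tau_\infty$ is finite: a direct Lipschitz bound (bypassing Lemma~\ref{c41} for this direction) gives $\omega_-=-\infty$, the first integral gives $\eta\to 0$, and a single contradiction via $\tau'\to\pm\infty$ handles $c+a\tau_\infty\neq 0$ uniformly without a separate $c=0$ case; in the infinite-$\tau$ branch you invoke Lemma~\ref{c41}~ii) contrapositively. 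Your route is slightly tidier in Case~1 (no $c=0$ subcase) and, as a bonus, your sign analysis of $\eta'=-a\tau^2-c\tau\to-\infty$ in Case~2 actually yields $\eta\to+\infty$ rather than just $|\eta|\to+\infty$, which is stronger than what the lemma states.
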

\begin{proof} We will only consider the interval $(\omega_{-},\overline{s})$, since the proof for the interval $(\overline{s},\omega_{+})$ follows with similar arguments.
	Since $\alpha(s)$ is unbounded on $(\omega_{-},\overline{s})$ it follows from Lemma \ref{ln6} that there exists $s_1 \in (\omega_{-},\overline{s})$ such that $\alpha(s)$, $\tau(s)$ and $\eta(s)$ are monotone on $(\omega_{-},s_1)$. Thus, the monotone function $\eta(s)$ is either bounded or unbounded on $(\omega_{-},s_1)$. 
	
	If $\eta(s)$ is bounded, it follows item \it ii) \rm of  Lemma \ref{c41} that $\tau(s)$ is bounded on $(\omega_{-},s_1)$, $\omega_{-}=-\infty$ and the limits $ \lim_{s \to -\infty}\tau(s)$ and $ \lim_{s \to -\infty}\eta(s)$ exist. Hence, $ \lim_{s \to -\infty}\eta^{\prime}(s)=-\lim_{s \to -\infty}\tau(s)[c+a\tau(s)]$ also exist and therefore $ \lim_{s \to -\infty}\tau(s)[c+a\tau(s)]=0.$ We will prove that $ \lim_{s \to -\infty}\tau(s) \neq 0$ when $c\neq 0$. Assume by contradiction that $ \lim_{s \to -\infty}\tau(s)=0$. Hence, $ \lim_{s \to -\infty}\tau^{\prime}(s)=\lim_{s \to -\infty}[c+\tau(s)]\alpha(s)-\eta(s)=\pm \infty$ and the improper integral 
	$$\int_{-\infty}^{s_1}\tau^{\prime}(u)du=\tau(s_1)-\lim_{s \to -\infty}\tau(s)=\tau(s_1),$$
which is a contradiction. Therefore, $ \lim_{s \to -\infty}[c+a\tau(s)]=0$. 
	Note that  $2\alpha(s)\eta(s)+\tau^{2}(s)=\gamma,$ where $\gamma \in \{-1,0,1\}$. By hypothesis   $\alpha(s)$ is unbounded,  hence  $ \lim_{s \to -\infty}\eta(s)=0$. 
	
	If $\eta(s)$ is unbounded on $(\omega_{-},s_1)$ then $ 2\alpha(s)\eta(s)+\tau^{2}(s)=\gamma$ implies that $\tau(s)$ is unbounded on $(\omega_{-},s_1)$ and thus $\lim_{s \to \omega_{-}}|\eta(s)|=\lim_{s \to \omega_{-}}|c+a\tau(s)|=+\infty$.
	\end{proof}

\begin{lemma}\label{c40l3}
	Let $\psi(s)=(\alpha(s),\tau(s),\eta(s))$ be a non trivial solution of (\ref{s2}) defined on the maximal interval $I=(\omega_{-}, \omega_{+})$, $a>0$, $c\geq0$ and initial condition $\psi(0)\in H$. 
	Then there exists a global maximum point $s_0$ of $\alpha(s)$.  
\end{lemma}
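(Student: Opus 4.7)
The plan is to reduce the claim to finding a zero of $\tau$ in $I$, and then invoke Lemma \ref{ln1}(i): under the present hypotheses ($c\geq 0$ and $\psi(0)\in H$), every critical point of $\alpha$ is automatically a global maximum. Since $\alpha'=\tau$, it suffices to produce some $s_0\in I$ with $\tau(s_0)=0$. If $\tau(0)=0$ there is nothing to prove, so assume $\tau(0)\neq 0$; I will argue by contradiction, assuming $\tau$ has no zero on $I$, which (by continuity) forces $\tau$ to have constant sign on $I$.

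Consider first the case $\tau(s)>0$ on $I$. From $\alpha'=\tau>0$ the function $\alpha$ is strictly increasing on $[0,\omega_+)$, while $\alpha<0$ throughout $I$ because $\psi(0)\in H$. Therefore $\alpha(0)<\alpha(s)<0$, so $\alpha$ is bounded on $[0,\omega_+)$. Lemma \ref{g2} then gives $\omega_+=+\infty$ together with $\lim_{s\to+\infty}\tau(s)=0$ and $\lim_{s\to+\infty}\tau'(s)=0$. Being monotone and bounded, $\alpha^{*}:=\lim_{s\to+\infty}\alpha(s)\in[\alpha(0),0]$ exists and is finite.

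I would then feed these limits into the identity furnished by Lemma \ref{novo} (equation \eqref{novo1}) in the form
\begin{equation*}
\alpha(s)\tau'(s)=\frac{1+\tau^2(s)}{2}+[c+a\tau(s)]\alpha^2(s).
\end{equation*}
Letting $s\to+\infty$, the left-hand side tends to $\alpha^{*}\cdot 0=0$, while the right-hand side tends to $\tfrac{1}{2}+c(\alpha^{*})^2\geq \tfrac{1}{2}>0$, contradicting the equality. The case $\tau(s)<0$ on $I$ is handled analogously on the interval $(\omega_-,0]$: there $\alpha$ is strictly decreasing, so $\alpha(s)\in[\alpha(0),0)$ is bounded; Lemma \ref{g2} supplies $\omega_-=-\infty$ with $\tau,\tau'\to 0$ as $s\to-\infty$, and the same identity again yields $0\geq \tfrac{1}{2}$. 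Hence $\tau$ must vanish at some $s_0\in I$, and by Lemma \ref{ln1}(i) this $s_0$ is a global maximum of $\alpha$.

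The only real obstacle is recognizing which one-sided interval, $[0,\omega_+)$ or $(\omega_-,0]$, yields the boundedness of $\alpha$ needed to apply Lemma \ref{g2}; once the correct side is chosen for each sign of $\tau$, the contradiction drops out of the identity \eqref{novo1}. The hypothesis $c\geq 0$ is used precisely to guarantee that the limiting value $\tfrac{1}{2}+c(\alpha^{*})^2$ remains strictly positive.
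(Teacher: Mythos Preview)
Your proof is correct, but it takes a somewhat different route from the paper's. Both arguments start by assuming $\tau$ has constant sign and aim for a contradiction, and both finish by invoking Lemma~\ref{ln1}(i) once a zero of $\tau$ is found. The difference is in how the contradiction is extracted.

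The paper argues directly that if $\tau>0$ on $I$ then $\alpha$ is increasing (and negative) while $\eta$ is decreasing (and positive), so both have finite limits at $\omega_+$; the constraint $-2\alpha\eta=\tau^2+1$ then forces $\tau$ to have a finite limit as well. Thus $\psi(s)\to p\in H$, which means $\omega_+=+\infty$ and $p$ must be a singular point of \eqref{s2} in $H$. Since Lemma~\ref{tri} rules out singular solutions in $H$ when $c\geq 0$, this is the contradiction. Your approach instead invokes the heavier Lemma~\ref{g2} to obtain $\tau,\tau'\to 0$, and then plugs into the algebraic identity \eqref{novo1} to get $0=\tfrac12+c(\alpha^*)^2>0$. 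What you gain is that the contradiction is purely algebraic and does not require the dynamical-systems step ``limit point of a bounded orbit is a singular point''; what the paper's argument gains is that it avoids the Barbalat-type machinery inside Lemma~\ref{g2} and uses only monotonicity and Lemma~\ref{tri}.
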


\begin{proof}
	Assume by contradiction that $\alpha(s)$ has no critical  points i.e. $\tau(s)>0$ or $\tau(s)<0$ for all $s \in I.$		
	If  $\tau(s)>0$, then $\alpha(s)$ is negative and strictly increasing on $I$ and $\eta(s)$ is positive and strictly decreasing on $I$.  Thus, the limits $ \lim_{s \to \omega_{+}}\alpha(s)$, $ \lim_{s \to \omega_{+}}\eta(s)$ exist and consequently  $ \lim_{s \to \omega_{+}}\tau(s)$ exists, since $-2\alpha(s)\eta(s)=\tau^{2}(s)+1$. Hence, there exists $p \in H$ such that $ \lim_{s \to \omega_{+}}\psi(s)=p$ and thus, $\omega_{+}=+\infty$ and $\psi(s)=p$ is a singular solution of \eqref{s2}. This contradicts Lemma \ref{tri} which shows  that the system \eqref{s2} has no singular solutions on the set $H$, when  $c\geq 0$. If  $\tau(s)<0$, we  obtain a similar contradiction. Therefore, there exists $s_0 \in I$ such that $s_0$ is a critical point of $\alpha(s)$ and it follows from item \it i) \rm of  Lemma \ref{ln1} that $s_0$ is the global maximum point.
\end{proof}

 Our next lemma shows that for non trivial solutions  $\psi$  of \eqref{s2},  when $c\geq 0$ (resp. $c\leq 0$) and $\psi\in H$ 
(resp. $\psi\in S$), then $\alpha$ is unbounded on both ends of the maximal interval $I$. 

\begin{lemma}\label{c40l4extra}
	Let $\psi(s)=(\alpha(s),\tau(s),\eta(s))$ be a non trivial solution of (\ref{s2}) defined on the maximal interval $I=(\omega_{-}, \omega_{+})$, with  $a>0$. 
	
	i) If $c\geq0$ and $\psi(0)\in H$,  then $ \lim_{s \to \omega_{-}}\alpha(s)=\lim_{s \to \omega_{+}}\alpha(s)=-\infty$.

ii) If $c\leq0$ and $\psi(0)\in S$, then $ \lim_{s \to \omega_{-}}|\alpha(s)|=\lim_{s \to \omega_{+}}|\alpha(s)|=+\infty$.
\end{lemma}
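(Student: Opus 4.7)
The plan is to argue both parts by contradiction using a common scheme, exploiting the identity \eqref{novo1} from Lemma \ref{novo} together with Lemma \ref{g2}, and then upgrading ``unbounded'' to ``$\to \pm\infty$'' via Lemma \ref{ln6}. I treat the left end $\omega_{-}$; the right end follows from the symmetric statements already built into the earlier lemmas.

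Suppose, for contradiction, that $\alpha$ is bounded on some subinterval $(\omega_{-},\overline{s})$. Lemma \ref{g2} then forces $\omega_{-}=-\infty$ together with $\lim_{s\to-\infty}\tau(s)=0$ and $\lim_{s\to-\infty}\tau'(s)=0$. Pick any sequence $s_n\to -\infty$ and, using the boundedness of $\alpha$, extract a subsequence (relabeled $s_n$) with $\alpha(s_n)\to\alpha_{*}\in\R$. Evaluating identity \eqref{novo1} at $s_n$ and passing to the limit yields
\begin{equation*}
c\,\alpha_{*}^{2}=\frac{\epsilon}{2},\qquad \text{where } \epsilon=-1 \text{ if } \psi(0)\in H \text{ and } \epsilon=+1 \text{ if } \psi(0)\in S.
\end{equation*}
In part i) the hypothesis $c\geq 0$ gives $c\alpha_{*}^{2}\geq 0$, contradicting $\epsilon/2=-1/2$; in part ii) the hypothesis $c\leq 0$ gives $c\alpha_{*}^{2}\leq 0$, contradicting $\epsilon/2=1/2$. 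Hence $\alpha$ must be unbounded on $(\omega_{-},\overline{s})$.

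To upgrade ``unbounded'' to a signed $\pm\infty$ limit, I invoke Lemma \ref{ln6}. In case i) ($c\geq 0$) its item i) supplies $s_1\in I$ such that $\alpha$, $\tau$, $\eta$ are all monotone on $(\omega_{-},s_1)$; being monotone and unbounded, $\alpha(s)$ has a limit equal to $\pm\infty$, and since $\psi(0)\in H$ forces $\alpha(s)\leq\alpha(s_0)<0$ (via the global maximum guaranteed by Lemma \ref{c40l3} and item i) of Lemma \ref{ln1}), the only option is $\lim_{s\to\omega_{-}}\alpha(s)=-\infty$. In case ii) ($c\leq 0$) the unboundedness of $\alpha$ instead activates item ii) of Lemma \ref{ln6}, again producing monotonicity near $\omega_{-}$ and hence $\lim_{s\to\omega_{-}}|\alpha(s)|=+\infty$.

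The main obstacle is the subsequential-limit step: the identity $c\alpha_{*}^{2}=\epsilon/2$ is obtained only for a cluster value $\alpha_{*}$, so one must ensure the contradiction survives regardless of how $\alpha$ may oscillate while bounded. This is clean thanks to \eqref{novo1} precisely because $\tau$ and $\tau'$ vanish in the full limit, not merely along a subsequence, which is exactly what Lemma \ref{g2} delivers.
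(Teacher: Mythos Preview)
Your argument is correct and, in fact, slightly cleaner than the paper's. Both proofs assume $\alpha$ bounded on a half-interval and invoke Lemma~\ref{g2} to force $\omega_{-}=-\infty$ and $\tau,\tau'\to 0$. At that point the paper proceeds differently in the two parts: for i) it uses the critical-point structure (Lemmas~\ref{c40l3} and~\ref{ln3}) plus Lemma~\ref{c41} to obtain monotonicity and hence convergence of $\psi(s)$ to some $p$, and then appeals to Lemma~\ref{tri} to rule out a singular solution in $H$; for ii) it passes through both \eqref{novo1} and \eqref{novo2} to show the limits of $\alpha$ and $\eta$ exist, again landing on a forbidden singular solution via Lemma~\ref{tri}. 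Your approach instead feeds a cluster value $\alpha_{*}$ directly into \eqref{novo1} and reads off the sign contradiction $c\alpha_{*}^{2}=\epsilon/2$, bypassing Lemma~\ref{tri} entirely and handling both parts with one uniform computation. What you gain is economy; what the paper's route offers is the extra geometric information (identification of the would-be limit point), which is not needed here.

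One small citation fix: in part ii) you write that unboundedness of $\alpha$ ``activates item ii) of Lemma~\ref{ln6}'', but that item requires $c<0$. When $c=0$ you should instead cite item i) of Lemma~\ref{ln6}; the conclusion (eventual monotonicity of $\alpha$) is the same either way. Also, in part i) the sign of $\alpha$ follows immediately from the definition of $H$ (where $\alpha<0$), so the detour through Lemmas~\ref{c40l3} and~\ref{ln1} is unnecessary.
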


\begin{proof}
\it i)  \rm	Lemma \ref{c40l3} implies that $\alpha(s)$ has a unique critical point $s_0\in I$. It follows from Lemma \ref{ln3} that $\tau(s)$ has at  most one critical point $s_1$ and $\tau(s_1)<0$. Thus, the functions $\alpha(s)$, $\tau(s)$ and $\eta(s)$ are monotone on $(\omega_{-},s_0)$ and $(s_1,\omega_{+})$. Assume by contradiction that $\alpha(s)$ is bounded on $(\omega_{-},s_0)$ (resp. $(s_1,\omega_{+})$). It follows from item \it i) \rm of  Lemma \ref{c41} that $\tau(s)$ and $\eta(s)$ are also bounded on $(\omega_{-},s_0)$ and $\omega_{-}=-\infty$ (resp. $(s_1,\omega_{+})$ and $\omega_{+}=+\infty$). 
	Hence, there exist $p \in H$ (resp. $\overline{p}\in H$) such that $ \lim_{s \to -\infty}\psi(s)=p$, (resp. $\lim_{s \to +\infty}\psi(s)=\overline{p}$) and $\psi(s)=p$ (resp. $\psi(s)=\overline{p}$) is a singular solution of \eqref{s2} in $H$. This contradicts Lemma \ref{tri} that asserts that  \eqref{s2} has no singular solutions in the set $H$, when $c\geq0$. Since $\alpha(s)<0$ we conclude that $ \lim_{s \to \omega_{-}}\alpha(s)=\lim_{s \to \omega_{+}}\alpha(s)=-\infty.$

\it ii) \rm 	Let us consider $\overline{s} \in I$ fixed. We will only consider the interval $(\omega_{-},\overline{s})$, since   the proof for the interval $(\overline{s},\omega_{+})$ follows from similar arguments. Assume by contradiction that $\alpha(s)$ is bounded on $(\omega_{-},s_0)$. It follows from item \it i) \rm of  Lemma \ref{c41} that $\tau(s)$ and $\eta(s)$ are also bounded on $(\omega_{-},s_0)$ and $\omega_{-}=-\infty$. Thus, Lemma \ref{g2} implies that $\lim_{s \to -\infty}\tau(s)=0$ and $\lim_{s \to -\infty}\tau^{\prime}(s)=0$. From \eqref{novo1} and \eqref{novo2} we have  that $\lim_{s \to -\infty}\alpha(s)$ and $\lim_{s \to -\infty}\eta(s)$ exist. Hence, there exists $p \in S$ such that $ \lim_{s \to -\infty}\psi(s)=p$ and $\psi(s)=p$, $s \in \R$  is a singular solution of \eqref{s2} in $S$, which contradicts  Lemma \ref{tri} that asserts that \eqref{s2} has no singular solutions in $S$, when $c\leq0$. Therefore, $\alpha(s)$ is unbounded on $(\omega_{-},\overline{s})$ and it follows from item \it ii) \rm of  Lemma \ref{ln6} that $ \lim_{s \to \omega_{-}}|\alpha(s)|=+\infty.$
\end{proof}

In order to obtain additional properties on the solutions of  (\ref{s2}),  we will separate the study in three cases: $c=0$, $c<0$ and $c>0$. 

When $c=0$ the solutions of  \eqref{s2} provide the   soliton solutions to the CF in $Q^{2}_{+}$. In this case,  (\ref{s2}) has no singular solutions in the set $H\cup S$. We also recall that, if $\psi(s) \in H$, $s \in I$, then $\alpha(s)<0$ and $\eta(s)>0$, and if $\psi(s) \in C$, $s \in I$, then $\alpha(s)\leq0$ and $\eta(s)\geq0$. Note that, when $c=0$, $\psi(s)=(\alpha_0,0,0)$, $s \in I$, $\alpha_{0}\neq0$ are singular solutions of (\ref{s2}) in $C$. These solutions correspond to the  parables on the light cone i.e. they are trivial solutions to the CF in $Q^{2}_{+}$. In the next lemma we will study the behaviour of $\tau(s)$, when $c=0$ and $\psi(s) \in H\cup C$  is a non trivial solution of \eqref{s2}.

\begin{lemma}\label{c40l5}
	Let $\psi(s)=(\alpha(s),\tau(s),\eta(s))$ be a non trivial solution of (\ref{s2}) defined on the maximal interval $I=(\omega_{-}, \omega_{+})$, $a>0$, $c=0$ and initial condition $\psi(0)\in H\cup C$. Then $\omega_{+}=+\infty$,  $ \lim_{s \to +\infty}\tau(s)=0$ and $ \lim_{s \to \omega_{-}}\tau(s)=+\infty$.
\end{lemma}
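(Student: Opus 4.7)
The strategy is to treat the two ends of the maximal interval separately, using two preliminary facts special to $c = 0$ and $\psi(0) \in H \cup C$: first, $\eta'(s) = -a\tau^2(s) \leq 0$, so $\eta$ is monotone non-increasing on $I$; second, $\eta \geq 0$ on $H \cup C$, and in fact $\eta(0) > 0$ for any non-trivial solution (the alternative $\eta(0) = 0$ in $C$ would force $\tau(0) = 0$ by the constraint $2\alpha\eta + \tau^2 = 0$, and uniqueness of the initial value problem for \eqref{s2} would then give $\psi \equiv (\alpha(0), 0, 0)$, a trivial solution).

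For the right end, these two facts imply that $\eta$ is bounded on $(0, \omega_+)$, so Lemma \ref{c41}(ii) yields $\omega_+ = +\infty$ and $\tau$ bounded on $(0, +\infty)$. Integrating $\eta' = -a\tau^2$ from $0$ to $+\infty$ gives $\int_0^{+\infty}\tau^2(u)\,du < +\infty$. By Lemma \ref{ln3}(i) every critical point of $\tau$ is a strict local minimum, so $\tau$ has at most one critical point $s_0$, and $\tau$ is therefore monotone on $(s_0, +\infty)$ (or on all of $I$, if no critical point exists). A bounded monotone function whose square is $L^1$ near $+\infty$ must tend to zero, giving $\lim_{s \to +\infty}\tau(s) = 0$.

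For the left end, the same monotonicity argument shows $\tau$ is monotone on $(\omega_-, s_0)$, with $M := \lim_{s \to \omega_-}\tau(s) \in (\tau(s_0), +\infty]$. I will show $M = +\infty$ by contradiction. If $M$ were finite, $\tau$ would be bounded on the left; then $\alpha' = \tau$ and $\eta' = -a\tau^2$ are bounded, so maximality of $I$ forces $\omega_- = -\infty$. Three subcases now arise. If $M > 0$, the IVT produces some $s^* < s_0$ with $\tau(s^*) = 0$, and $\tau > M/2$ for $s$ sufficiently negative drives $\alpha(s) \to -\infty$; Lemma \ref{g3} then requires $\lim_{s \to -\infty}(c + a\tau) = 0$, contradicting $c + a\tau \to aM > 0$. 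If $M < 0$, then $\alpha' = \tau < M/2 < 0$ for $s$ very negative, so integrating gives $\alpha(s) \to +\infty$, in conflict with $\alpha \leq 0$ on $H \cup C$. If $M = 0$, then since Lemma \ref{ln3}(ii) gives $\tau(s_0) < 0$, monotonicity forces $\tau < 0$ on $(-\infty, s_0)$ and $\alpha$ is trapped in $[\alpha(s_0), 0]$, hence bounded; Lemma \ref{g2} yields $\tau'(s) \to 0$ as $s \to -\infty$, and substituting into $\tau' = a\tau\alpha - \eta$ with $\tau \to 0$ and $\alpha$ bounded forces $\eta \to 0$, contradicting $\eta(-\infty) \geq \eta(0) > 0$.

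The most delicate point is the subcase $M = 0$: it would otherwise be consistent with $\tau \to 0^-$ and $\alpha$ staying bounded, and only the preliminary observation $\eta(0) > 0$ combined with the Barbalat-type conclusion of Lemma \ref{g2} closes the argument. The non-triviality hypothesis is essential precisely to exclude the singular solutions $(\alpha_0, 0, 0)$ of \eqref{s2} living in $C$.
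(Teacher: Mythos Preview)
Your argument is essentially correct and close in spirit to the paper's, but there is a small gap in the left-end analysis. You assert $M := \lim_{s\to\omega_-}\tau(s)\in(\tau(s_0),+\infty]$, which presupposes that the unique critical point $s_0$ exists; if $\tau$ has no critical point at all (which Lemma~\ref{ln3}(i) does allow), $\tau$ could be increasing on all of $I$ with $\tau<0$, and then a priori $M\in[-\infty,0)$. Your three subcases $M>0$, $M<0$, $M=0$ cover finite $M$, but $M=-\infty$ is left unaddressed. This is easy to patch: if $\tau$ is increasing with $\tau<0$ throughout, then $\alpha'=\tau<0$ makes $\alpha$ strictly decreasing, and since $\alpha\le 0$ on $H\cup C$ this forces $\alpha$ bounded on $(\omega_-,0)$; Lemma~\ref{c41}(i) then gives $\tau$ bounded, contradicting $M=-\infty$. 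With this sentence inserted, your proof is complete. (The IVT remark in the $M>0$ subcase, producing $s^*$ with $\tau(s^*)=0$, is correct when $s_0$ exists but is never used afterward; you can simply delete it.)

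For comparison, the paper organizes the left end differently: it first proves $\alpha$ is unbounded---invoking Lemma~\ref{c40l4extra} for $\psi(0)\in H$ and, for $\psi(0)\in C$, a contradiction via Lemma~\ref{g2} that forces $\eta\to 0$ and hence $\eta\equiv 0$ (this is exactly your $M=0$ subcase). From $\alpha\to-\infty$ the paper then concludes $\tau\to+\infty$, though the final step is stated rather tersely. Your case split on $M$ is more explicit and avoids citing Lemma~\ref{c40l4extra}, at the cost of needing the extra patch above; the paper's route is shorter once one accepts the somewhat elliptic last implication. Both arguments ultimately rest on the same ingredients: monotonicity of $\eta$, Lemma~\ref{c41}, Lemma~\ref{g2}, and Lemma~\ref{g3}.
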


\begin{proof}
	It follows from item \it i) \rm of  Lemma \ref{ln6} that there exist $s_1,s_2 \in I$ such that $\alpha(s)$, $\tau(s)$ and $\eta(s)$ are monotone on $(\omega_{-},s_1)$ and $(s_2,\omega_{+})$. Since $\eta(s)$ is a non  negative function when $\psi(0) \in H\cup C$, the third equation of \eqref{s2} and  $c=0$ imply that $\eta(s)$ is decreasing and bounded on $(s_2, \omega_{+})$. Thus, it follows from item \it ii) \rm of  Lemma \ref{c41} that $\tau(s)$ is bounded on $(s_2, \omega_{+})$ and $\omega_{+}=+\infty$. Hence, the limits $ \lim_{s \to +\infty}\eta(s),$ $ \lim_{s \to +\infty}\tau(s)$ exist. Therefore, $ \lim_{s \to +\infty}\eta^{\prime}(s)=-\lim_{s \to +\infty}a\tau^{2}$ exists and consequently $ \lim_{s \to +\infty}\tau(s)=0$.
	
	We will now prove that $\tau(s)$ is unbounded on $(\omega_{-}, s_1)$.  Lemma \ref{c40l4extra} asserts that $\alpha$ is unbounded on $(\omega_{-}, s_1)$ when $\psi(0) \in H$ and $\lim_{s \to \omega_{-}}\alpha(s)=-\infty$. We claim that $\alpha(s)$ is also unbounded on $(\omega_{-},s_1)$ when $\psi(0) \in C$. In fact, assume by contradiction that $\alpha(s)$ is bounded on $(\omega_{-},s_1).$ Then, it follows from item \it i) \rm of  Lemma \ref{c41} that $\tau(s)$ and $\eta(s)$ are also bounded on $(\omega_{-},s_1)$ and $\omega_{-}=-\infty$. Moreover, Lemma \ref{g2} implies that $ \lim_{s \to -\infty}\tau(s)=0$ and $ \lim_{s \to -\infty}\tau^{\prime}(s)=0$. Thus, $ \lim_{s \to -\infty}\eta(s)=\lim_{s \to -\infty}\tau^{\prime}(s)-\alpha(s)\tau(s)=0.$ Since $\eta(s)$ is non negative and decreasing we conclude that $\eta(s)$ vanishes  on $(\omega_{-},s_1)$, which contradicts  the assumption that we are considering only non trivial solutions. Hence, $\alpha(s)$ is unbounded when $\psi(0) \in C$ and $\lim_{s \to \omega_{-}}\alpha(s)=-\infty$ i.e. $\alpha(s)$ is increasing on $(\omega_{-},s_1)$. Therefore, if $\psi(0) \in H \cup C$ and $\psi(s)$ is a non trivial solution then $ \lim_{s \to \omega_{-}}\tau(s)=+\infty$.	
\end{proof}

We will now study the solutions of \eqref{s2} with initial condition in $S$. In this case,  (\ref{s2}) has no singular solutions. Moreover, the functions $\alpha(s)$ and $\eta(s)$ may change sign.

\begin{lemma}\label{c40l8}
	Let $\psi(s)=(\alpha(s),\tau(s),\eta(s))$ be a non trivial solution of (\ref{s2}) defined on the maximal interval $I=(\omega_{-}, \omega_{+})$, $a>0$, $c=0$ and initial condition $\psi(0)\in S$. Then, 
	
	i) either $\lim_{s \to \omega_{-}}\eta(s)=\lim_{s \to \omega_{-}}\tau(s)=+\infty$ or $\lim_{s \to -\infty}\eta(s)=\lim_{s \to -\infty}\tau(s)=0$ and 
	
	ii) either $\lim_{s \to \omega_{+}}-\eta(s)= \lim_{s \to \omega_{+}}\tau(s)=+\infty$ or $ \lim_{s \to +\infty}\eta(s)=\lim_{s \to +\infty}\tau(s)=0$). 
\end{lemma}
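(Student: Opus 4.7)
The plan is to combine Lemma \ref{c40l4extra}(ii), Lemma \ref{g3}, and the monotonicity of $\eta$ (from $\eta'=-a\tau^2\le 0$ when $c=0$) with the quadratic constraint $2\alpha(s)\eta(s)+\tau^2(s)=1$ valid on $S$ in order to pin down all the signed limits at each endpoint.

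First I would apply Lemma \ref{c40l4extra}(ii) to obtain $\lim_{s\to\omega_-}|\alpha(s)|=\lim_{s\to\omega_+}|\alpha(s)|=+\infty$; in particular, $\alpha$ is unbounded on both $(\omega_-,0)$ and $(0,\omega_+)$. Lemma \ref{g3} (with $c=0$) then gives a clean dichotomy at each endpoint: at $\omega_-$, either $\lim_{s\to\omega_-}|\eta(s)|=\lim_{s\to\omega_-}|\tau(s)|=+\infty$, or $\omega_-=-\infty$ with $\lim_{s\to-\infty}\eta(s)=\lim_{s\to-\infty}\tau(s)=0$; similarly at $\omega_+$. In each case the ``vanishing'' alternative is exactly the second clause of (i) and (ii), so no further work is needed there.

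To settle the ``infinite'' alternatives with the correct signs, I invoke Lemma \ref{ln6}(i) (applicable since $c=0\ge 0$) to find intervals $(\omega_-,s_1)$ and $(s_2,\omega_+)$ on which $\alpha,\tau,\eta$ are each monotone; combined with their unboundedness, the one-sided limits of these functions at $\omega_\pm$ exist in $\{\pm\infty\}$. Because $\eta$ is globally non-increasing, $\eta\to+\infty$ as $s\to\omega_-$ (increasing leftwards) and $\eta\to-\infty$ as $s\to\omega_+$. From $\tau^2(s)=1-2\alpha(s)\eta(s)\to+\infty$ one reads off $\alpha\eta\to-\infty$ at each endpoint, which forces $\alpha\to-\infty$ at $\omega_-$ and $\alpha\to+\infty$ at $\omega_+$. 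Since $\tau=\alpha'$, $\alpha$ is increasing on $(\omega_-,s_1)$ and on $(s_2,\omega_+)$, hence $\tau>0$ there; together with monotonicity and $|\tau|\to+\infty$, this yields $\tau\to+\infty$ at both endpoints, giving the signed limits in the statement.

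The only delicate step is converting the absolute-value dichotomy of Lemma \ref{g3} into signed limits, and the quadratic constraint $2\alpha\eta+\tau^2=1$ combined with the monotonicity of $\eta$ is precisely the tool that does this; beyond the previously established lemmas no additional analytic estimates should be required.
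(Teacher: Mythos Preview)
Your argument is correct and follows essentially the same route as the paper: invoke Lemma~\ref{c40l4extra}(ii) for the unboundedness of $\alpha$, apply Lemma~\ref{g3} for the dichotomy at each endpoint, and then in the infinite alternative use the monotonicity of $\eta$ (from $\eta'=-a\tau^2$) together with the constraint $2\alpha\eta+\tau^2=1$ to determine the signs of $\eta$, $\alpha$, and finally $\tau=\alpha'$. Your version is slightly more explicit than the paper's in invoking Lemma~\ref{ln6}(i) for the monotonicity of all three functions near the endpoints, but the logic is the same.
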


\begin{proof}
	Lemma \ref{c40l4extra} asserts that $\alpha(s)$ is unbounded on $(\omega_{-},\overline{s})$ and $(\overline{s},\omega_{+})$, for  $\overline{s} \in I$. Thus, it follows from Lemma \ref{g3} that either $\lim_{s \to \omega_{-}}|\eta(s)|=\lim_{s \to \omega_{-}}|\tau(s)|=+\infty$ or $\lim_{s \to -\infty}\eta(s)=\lim_{s \to -\infty}\tau(s)=0$. Moreover,  either $\lim_{s \to \omega_{+}}|\eta(s)|= \lim_{s \to \omega_{+}}|\tau(s)|=+\infty$ or $ \lim_{s \to +\infty}\eta(s)=\lim_{s \to +\infty}\tau(s)=0$). The third equation of \eqref{s2} implies that $\eta(s)$ is a decreasing function on $I$. Thus, when $\tau(s)$ is unbounded on $(\omega_{-},\overline{s})$  (resp. $(\overline{s},\omega_{+})$) then $\eta(s)$ is unbounded  on $(\omega_{-},\overline{s})$ (resp. $(\overline{s},\omega_{+})$) and $\lim_{s \to \omega_{-}}\eta(s)=+\infty$ (resp. $\lim_{s \to \omega_{+}}\eta(s)=-\infty$). Thus, $2\alpha(s)\eta(s)+\tau^{2}(s)=1$ implies that $\lim_{s \to \omega_{-}}\alpha(s)=-\infty$ (resp. $\lim_{s \to \omega_{+}}\alpha(s)=+\infty$) and therefore, $\lim_{s \to \omega_{-}}\tau(s)=+\infty$ (resp. $\lim_{s \to \omega_{+}}\tau(s)=+\infty$).
\end{proof}
 
Let $X(s)$ be a soliton solution  to the CF on $ Q^{2}_{+}$ corresponding to a solution of \eqref{s2}.  Since its curvature is given by $k(s)=a\tau(s)$, as an immediate consequence of Lemmas  \ref{c40l5} and \ref{c40l8},  we get 

\begin{corollary}\label{coroc3_4}
 Assume that  the curve $X: I \rightarrow Q^{2}_{+}$, $s \in I$ is a soliton solution to the CF, with curvature $k(s)$, that corresponds to a non trivial solution  $\psi(s)=(\alpha(s),\tau(s),\eta(s))$  of (\ref{s2}) defined on the maximal interval $I=(\omega_{-}, \omega_{+})$, $a>0$, $c=0$, with initial condition $\psi(0)\in H\cup C\cup S$.  Then 

i) If $\psi(0)\in H\cup C$ (resp. $\psi(0)\in C$) then  $\omega_+=+\infty$.    Moreover 	$ \lim_{s \to \omega_{-}}k(s)=+\infty$ and $ \lim_{s \to +\infty}k(s)=0.$ 

ii) If $\psi(0)\in S$, then at each end of the curve    $k(s)$ tends to $+\infty$ or converges to zero.	
\end{corollary}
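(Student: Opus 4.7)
The plan is to reduce the corollary to a direct translation of the conclusions of Lemmas \ref{c40l5} and \ref{c40l8} via the identity $k(s)=c+a\tau(s)=a\tau(s)$, which is valid here because $c=0$. Since $a>0$, the sign and limits of $k$ are the same as those of $\tau$ (scaled by the positive constant $a$), so every statement about the curvature at the ends of the maximal interval follows from the corresponding statement about $\tau(s)$ already established in the two preceding lemmas.

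For part i), I would simply apply Lemma \ref{c40l5}. That lemma asserts that whenever $\psi(0)\in H\cup C$ one has $\omega_{+}=+\infty$, $\lim_{s\to\omega_{-}}\tau(s)=+\infty$, and $\lim_{s\to+\infty}\tau(s)=0$. Multiplying by $a>0$ yields $\lim_{s\to\omega_{-}}k(s)=+\infty$ and $\lim_{s\to+\infty}k(s)=0$, which is precisely the conclusion of i).

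For part ii), I would invoke Lemma \ref{c40l8}, which treats the case $\psi(0)\in S$. At the left end that lemma gives the dichotomy $\lim_{s\to\omega_{-}}\tau(s)=+\infty$ or $\lim_{s\to-\infty}\tau(s)=0$ (with $\omega_{-}=-\infty$ in the latter case), and an analogous dichotomy holds at the right end. Multiplying each alternative by $a$ shows that at each end of $I$ the curvature $k(s)=a\tau(s)$ is either unbounded, with limit $+\infty$, or converges to $0$. This is exactly statement ii), completing the proof.

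The proof is essentially bookkeeping: the substantive analytic work has already been done in Lemmas \ref{c40l5} and \ref{c40l8}, and no step presents a real obstacle. The only point worth flagging is that when invoking Lemma \ref{c40l8} one must remember that the limit \emph{value} at an end is determined by the sign information in the lemma (so $k$ tends to $+\infty$, never $-\infty$), which is why the statement says ``tends to $+\infty$ or converges to zero'' rather than the weaker ``is unbounded or converges to zero''.
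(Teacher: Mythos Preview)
Your proof is correct and matches the paper's approach exactly: the paper presents this corollary as an immediate consequence of Lemmas \ref{c40l5} and \ref{c40l8} via the identity $k(s)=a\tau(s)$, without giving any further argument. Your write-up simply makes explicit the one-line translation the paper leaves implicit.
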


We will now study the solutions of \eqref{s2} with $\psi(0) \in H\cup C \cup S$ and $c<0$. Note that $\psi(s)=(\alpha_0,0,c\alpha_0) \in H$, $s \in I$ is a singular solution and \eqref{s2} has no singular solutions in $C$. However $\psi(s)=(0,0,0)$ is a singular solution at the boundary of $C$. In the next lemmas we will prove that $(\alpha_0,0,c\alpha_0)$ and $(0,0,0)$ are global attractors for the solutions in $H$ and $C$ respectively.

\begin{lemma}\label{c4-1l1}
	Consider $a>0$ and $c<0$. Let  $\Phi:H\rightarrow TH \subset \R^{3}$ be the vector field given by 
	$\Phi(\alpha,\tau,\eta)=\left(\tau,c\alpha+a\tau\alpha-\eta,-c\tau-a\tau^{2}\right)$. 
	Then  the singular point $p$ of $\Phi$ and the eigenvalues $\lambda$ of $d\Phi_{p}$ are given by
	\begin{equation}\label{pla} 
p=\left(-\frac{1}{\sqrt{-2c}},0,-\frac{c}{\sqrt{-2c}}\right), \qquad 	\qquad\lambda=\frac{-a\pm \sqrt{a^{2}-16c^{2}}}{2\sqrt{-2c}}.
	\end{equation}
	The eigenvalues are real numbers if $4|c|\leq a$ and they are complex numbers if $4|c|> a$.
\end{lemma}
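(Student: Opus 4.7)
The plan is to first locate the singular point by imposing $\Phi(\alpha,\tau,\eta)=0$ together with the constraint $2\alpha\eta+\tau^{2}=-1,\ \alpha<0$ defining $H$. The first component of $\Phi$ forces $\tau=0$, the third is then automatic, and the second gives $\eta=c\alpha$. Substituting into the constraint yields $2c\alpha^{2}=-1$, so since $c<0$ and $\alpha<0$, one obtains $\alpha=-1/\sqrt{-2c}$ and $\eta=c\alpha=-c/\sqrt{-2c}$, which is the point $p$ stated in \eqref{pla}.

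Next I would compute the Jacobian of $\Phi$ at a general point,
\[
J=\begin{pmatrix} 0 & 1 & 0 \\ c+a\tau & a\alpha & -1 \\ 0 & -c-2a\tau & 0 \end{pmatrix},
\]
and evaluate at $p$ to get
\[
J_{p}=\begin{pmatrix} 0 & 1 & 0 \\ c & -a/\sqrt{-2c} & -1 \\ 0 & -c & 0 \end{pmatrix}.
\]
Because the function $F(\alpha,\tau,\eta)=2\alpha\eta+\tau^{2}$ is a first integral of the flow (it is constant on trajectories of \eqref{s2}, as used throughout Lemmas \ref{novo}--\ref{ln6}), $\Phi$ is tangent to $H$ and hence $J_{p}$ preserves $T_{p}H$. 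The gradient $\nabla F(p)=(2\eta,2\tau,2\alpha)|_{p}=2\alpha(c,0,1)$ shows that
\[
T_{p}H=\{(x,y,z)\in\R^{3}:\ cx+z=0\}.
\]

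I would then pick the basis $e_{1}=(1,0,-c)$, $e_{2}=(0,1,0)$ of $T_{p}H$ and compute the action of $J_{p}$ on it. A short computation gives $J_{p}e_{1}=(0,2c,0)=2c\,e_{2}$ and $J_{p}e_{2}=(1,-a/\sqrt{-2c},-c)=e_{1}-(a/\sqrt{-2c})\,e_{2}$, so that in this basis the restriction $d\Phi_{p}|_{T_{p}H}$ is represented by
\[
A=\begin{pmatrix} 0 & 1 \\ 2c & -a/\sqrt{-2c} \end{pmatrix}.
\]
Its characteristic polynomial is $\lambda^{2}+\frac{a}{\sqrt{-2c}}\lambda-2c=0$, whose roots are
\[
\lambda=\frac{-a/\sqrt{-2c}\pm\sqrt{a^{2}/(-2c)+8c}}{2}=\frac{-a\pm\sqrt{a^{2}-16c^{2}}}{2\sqrt{-2c}},
\]
matching \eqref{pla}. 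The discriminant $a^{2}-16c^{2}$ is non-negative precisely when $4|c|\le a$, giving the dichotomy between real and complex eigenvalues.

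I do not expect any genuine obstacle here: the only subtlety is remembering to restrict $d\Phi_{p}$ to $T_{p}H$ (since $\Phi$ is only defined on the invariant surface $H$), and choosing a basis of $T_{p}H$ in which the restriction is easy to read off. Everything else is a direct computation.
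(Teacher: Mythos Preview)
Your proof is correct and follows the same approach as the paper: solve $\Phi=0$ under the constraint $2\alpha\eta+\tau^{2}=-1$, $\alpha<0$, to locate $p$, then linearize and restrict to $T_{p}H$ to read off the eigenvalues. The paper's own proof is far terser---it merely asserts the eigenvalue formula and notes that the eigenvectors have the form $w=(w_{1},w_{2},-cw_{1})\in T_{p}H$---so you have simply supplied the computational details that the paper omits.
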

\begin{proof}
	The singular point of $\Phi$ satisfies $\tau=0$, $c\alpha=\eta$. Since $(\alpha,\tau,\eta) \in H$ then $-1=2\alpha\eta+\tau^{2}=2\alpha\eta$. Hence, $2c\alpha^{2}=-1$, $  \alpha=- {1}/{\sqrt{-2c}}$. The singular point $ p$ of $\Phi$ in $H$ and  the eigenvalues  $\lambda$ of  $d\Phi_{ p}$  are  given by \eqref{pla} and the   eigenvectors are $ w=(w_{1},w_{2},-cw_{1})\in T_{ p}H,  w_{1}, w_{2} \in \R $. 
\end{proof}

\begin{lemma}\label{c4-1l6}
	Let $\psi(s)=(\alpha(s),\tau(s),\eta(s))$ be a non trivial solution of (\ref{s2}) defined on the maximal interval $I=(\omega_{-}, \omega_{+})$, $a>0$, $c<0$ and initial condition $\psi(0)\in H\cup C$. Then $\omega_+=+\infty$. If $\psi(0)\in H$ then  $\lim_{s \to +\infty} \psi(s)=p$, where $p$ is given by \eqref{pla}. 
	If $\psi(0)\in C$ then $ \lim_{s \to +\infty}\psi(s)=(0,0,0)$. 
	\end{lemma}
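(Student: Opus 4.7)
The plan is to exploit the monotonically decreasing function $g(s)=c\alpha(s)+\eta(s)$ introduced in the proof of Lemma \ref{g2} (which satisfies $g'(s)=-a\tau^2(s)\leq 0$) together with the constraints $\alpha\leq 0$ and $\eta\geq 0$ valid on $H\cup C$. Since $c<0$, the term $c\alpha$ is non-negative, so $g\geq 0$ throughout $I$. Thus $g$ is bounded between $0$ and $g(0)$, and in particular $0\leq c\alpha(s)\leq g(0)$ shows that $\alpha(s)$ is bounded on $I$.

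Once $\alpha$ is bounded, item \it i) \rm of Lemma \ref{c41} delivers boundedness of $\tau$ and $\eta$ together with $\omega_+=+\infty$. Lemma \ref{g2} then yields $\lim_{s\to+\infty}\tau(s)=0$ and $\lim_{s\to+\infty}\tau'(s)=0$. Since $g$ is decreasing and bounded below by $0$, the limit $L:=\lim_{s\to+\infty}g(s)$ exists with $L\geq 0$.

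The key algebraic step is to combine two asymptotic relations. Rewriting the second equation of \eqref{s2} as $\tau'(s)=c\alpha(s)+a\tau(s)\alpha(s)-\eta(s)$ and passing to the limit (using $\tau\to 0$, $\tau'\to 0$, and $\alpha$ bounded) gives $\lim_{s\to+\infty}\bigl(c\alpha(s)-\eta(s)\bigr)=0$. Adding and subtracting this with $\lim\bigl(c\alpha(s)+\eta(s)\bigr)=L$ yields
\begin{equation*}
\lim_{s\to+\infty}\alpha(s)=\frac{L}{2c},\qquad \lim_{s\to+\infty}\eta(s)=\frac{L}{2}.
\end{equation*}

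The value of $L$ is then fixed by the conservation law $2\alpha(s)\eta(s)+\tau^2(s)=\gamma$ from Proposition \ref{c4p1}, where $\gamma=-1$ for $\psi(0)\in H$ and $\gamma=0$ for $\psi(0)\in C$. Taking $s\to+\infty$ gives $L^2/(2c)=\gamma$. When $\psi(0)\in H$, this produces $L^2=-2c>0$, and because $L\geq 0$ we obtain $L=\sqrt{-2c}$, hence $\lim \psi(s)=\bigl(-1/\sqrt{-2c},0,-c/\sqrt{-2c}\bigr)=p$ as given in \eqref{pla}. When $\psi(0)\in C$, this forces $L=0$, hence $\lim\psi(s)=(0,0,0)$. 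There is no serious obstacle; the only point requiring care is justifying that $\alpha$ and $\eta$ actually converge (rather than merely being bounded), which is accomplished by the two relations $c\alpha+\eta\to L$ and $c\alpha-\eta\to 0$ obtained above.
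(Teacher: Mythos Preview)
Your proof is correct and follows essentially the same route as the paper: both use the decreasing Lyapunov function $g(s)=c\alpha(s)+\eta(s)\geq 0$ to bound $\alpha$ and $\eta$ on the forward half-interval, then invoke Lemma~\ref{g2} to obtain $\omega_+=+\infty$ and $\tau,\tau'\to 0$. The only minor difference is in the final extraction of the limits: the paper plugs $\tau,\tau'\to 0$ into the identities \eqref{novo1}--\eqref{novo2} to read off $\lim\alpha^2$ and $\lim\eta^2$ directly, whereas you pass to the limit in the second equation of \eqref{s2} to get $c\alpha-\eta\to 0$ and combine it with $c\alpha+\eta\to L$. Both computations are equivalent. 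One small imprecision: your sentence ``$g$ is bounded between $0$ and $g(0)$, and in particular $0\leq c\alpha(s)\leq g(0)$ shows that $\alpha(s)$ is bounded on $I$'' only holds for $s\geq 0$, since $g$ is decreasing; but as the lemma concerns only the behavior toward $\omega_+$, this does not affect the argument.
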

\begin{proof}
	Define  $ g(s)=c\alpha(s)+\eta(s)$, for  $s \in I$. Since $\alpha(s)\leq0$, $\eta(s)\geq0$ and $c<0$, then  $g(s)$ is a positive function and $g^{\prime}(s)=-a\tau^{2}(s)<0$, i.e.,  $g(s)$ is a decreasing function and $0 <g(s)\leq g(\overline{s})=M$, for all $s>\overline{s}\in I$ and $M \in \R$. Therefore, $0\leq c\alpha(s),\, \eta(s)<c\alpha(s)+\eta(s) < M$ , $s>\overline{s}$.  Hence, $\alpha(s)$ and $\eta(s)$ are bounded on $(\overline{s},\omega_{+})$, and  Lemma \ref{g2} implies that $\omega_{+}=+\infty$, $ \lim_{s \to +\infty}\tau(s)=0$ and $ \lim_{s \to +\infty}\tau^{\prime}(s)=0$. It follows from \eqref{novo1} and \eqref{novo2}  that  if $\psi(0) \in H$ then $\lim_{s \to +\infty} 2c\alpha^{2}(s)=-1$ and $\lim_{s \to +\infty} 2\eta^{2}(s)=-c,$ and if $\psi(0) \in C$ then $\lim_{s \to +\infty} \alpha^{2}(s)=
	\lim_{s \to +\infty} \eta^{2}(s)=0$.
\end{proof}

\begin{lemma} \label{c4-1l8l14}
	Let $\psi(s)=(\alpha(s),\tau(s),\eta(s))$ be a non trivial solution of (\ref{s2}) defined on the maximal interval $I=(\omega_{-}, \omega_{+})$, $a>0$, $c<0$ and initial condition $\psi(0)\in H\cup C\cup S$. 

i) If $\psi(0)\in H\cup C$, then	 $ \lim_{s \to \omega_{-}} \alpha(s)=-\infty$ and \\ either $\lim_{s \to \omega_{-}}\eta(s)=\lim_{s \to \omega_{-}}c+a\tau(s)=+\infty$ or $\lim_{s \to -\infty}\eta(s)=\lim_{s \to -\infty}c+a\tau(s)=0$.

ii) If $\psi(0)\in S$, then on the boundary of the interval $I$ we have: \\ 
Either $ \lim_{s \to \omega_{-}}|\eta(s)|=\lim_{s \to \omega_{-}}|c+a\tau(s)|=+\infty$ or $\lim_{s \to -\infty}\eta(s)=\lim_{s \to -\infty} c+a\tau(s)=0$. \\  
Either $  \lim_{s \to \omega_{+}}|\eta(s)|=\lim_{s \to \omega_{+}}|c+a\tau(s)|=+\infty$  or $\lim_{s \to +\infty}\eta(s)=\lim_{s \to +\infty}c+a\tau(s)=0$.
\end{lemma}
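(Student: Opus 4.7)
The plan is to argue by contradiction for part (i), and then to appeal directly to Lemmas \ref{c40l4extra} and \ref{g3} for part (ii). For (i), I would start by assuming $\alpha$ is bounded on some $(\omega_{-},\bar s)$. Lemma \ref{c41}\,i) then forces $\tau$ and $\eta$ bounded on that interval and $\omega_{-}=-\infty$; Lemma \ref{g2} yields $\lim_{s\to-\infty}\tau(s)=\lim_{s\to-\infty}\tau'(s)=0$. Solving the conservation identities \eqref{novo1} and \eqref{novo2} for $\alpha^{2}$ and $\eta^{2}$ (the denominator $c+a\tau(s)\to c\neq 0$ stays away from zero near $-\infty$) and using the sign conditions $\alpha<0,\;\eta>0$ on $H$ and $\alpha\leq 0,\;\eta\geq 0$ on $C$, one reads off
\[
\lim_{s\to-\infty}\psi(s)=q,\qquad q=\begin{cases} p & \text{if } \psi(0)\in H,\\ (0,0,0) & \text{if } \psi(0)\in C,\end{cases}
\]
where $p$ is the equilibrium of Lemma \ref{c4-1l1}.

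The decisive step is the rigidity provided by the monotone function $g(s)=c\alpha(s)+\eta(s)$, which satisfies $g'(s)=-a\tau^{2}(s)\leq 0$. Lemma \ref{c4-1l6} gives $\lim_{s\to+\infty}\psi(s)=q$ as well, so $g$ has the same finite limit $g(q)$ at both ends of $\R$ and must be constant. This yields $\tau\equiv 0$, making $\psi$ a trivial solution and contradicting the hypothesis. Hence $\alpha$ is unbounded on $(\omega_{-},\bar s)$, and since $\alpha\leq 0$ on $H\cup C$ this gives $\lim_{s\to\omega_{-}}\alpha(s)=-\infty$.

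For the second assertion in (i), I apply Lemma \ref{g3} to the now-unbounded $\alpha$: either $|\eta|,\;|c+a\tau|\to+\infty$ at $\omega_{-}$, or $\eta,\;c+a\tau\to 0$ at $-\infty$. To drop the absolute values in the blow-up alternative, note that $\eta\geq 0$ on $H\cup C$, so $|\eta|=\eta$. For $c+a\tau$, observe that $\alpha(s)\to-\infty$ with $\alpha\leq 0$ and $\alpha$ monotone near $\omega_{-}$ (Lemma \ref{ln6}) forces $\tau=\alpha'>0$ on a left neighborhood of $\omega_{-}$; by Lemma \ref{ln6} this $\tau$ is itself monotone, so if it had a finite limit then $c+a\tau$ would be bounded, contradicting $|c+a\tau|\to+\infty$. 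Hence $\tau\to+\infty$ and $c+a\tau\to+\infty$, as claimed.

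Part (ii) is then immediate: Lemma \ref{c40l4extra}\,ii) provides $\lim_{s\to\omega_{\pm}}|\alpha(s)|=+\infty$ when $c\leq 0$ and $\psi(0)\in S$, and applying Lemma \ref{g3} at each endpoint produces the stated alternatives, with absolute values retained since $\eta$ may change sign on $S$. The main obstacle I foresee is the clean identification of the backward-time limit $q$ in part (i); once the conservation laws \eqref{novo1}, \eqref{novo2} together with the sign restrictions have pinned $\lim_{s\to-\infty}\psi(s)$ to the equilibrium of Lemma \ref{c4-1l1}, the monotonicity of $g$ does the rest of the work.
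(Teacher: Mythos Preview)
Your argument is correct and follows the paper's route closely: contradiction from bounded $\alpha$, Lemma~\ref{g2} to get $\tau,\tau'\to 0$, the identities \eqref{novo1}--\eqref{novo2} to pin down the limit $q$, and then Lemma~\ref{g3} for the dichotomy; part (ii) is identical to the paper's use of Lemma~\ref{c40l4extra}\,ii) followed by Lemma~\ref{g3}.

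The one substantive difference is how you extract the contradiction in (i). The paper argues that if $\psi(s)\to q$ as $s\to-\infty$, then $q$ would be a ``non-attractor'' equilibrium, contradicting the eigenvalue computation of Lemma~\ref{c4-1l1} (for $H$) and the global-attractor statement of Lemma~\ref{c4-1l6} (for $C$). Your route instead combines $\lim_{s\to-\infty}\psi(s)=q$ with Lemma~\ref{c4-1l6}'s $\lim_{s\to+\infty}\psi(s)=q$ and the monotonicity of $g(s)=c\alpha(s)+\eta(s)$ to force $g$ constant, hence $\tau\equiv 0$. This is a cleaner and more self-contained contradiction: it avoids the local stable-manifold reasoning near $p$ in $H$, and it sidesteps the delicacy that $(0,0,0)$ lies only on the boundary of $C$, where the paper's appeal to attractor properties is less direct. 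Your sign discussion to remove the absolute values in the blow-up alternative matches the paper's.
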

\begin{proof}
	\it i) \rm Assume by contradiction that $\alpha(s)$ is bounded on $(\omega_{-}, \overline{s})$, $\overline{s} \in I$ fixed. It follows from Lemma \ref{g2} that $\tau(s)$ and $\eta(s)$ are bounded on $(\omega_{-}, \overline{s})$, $\omega_{-}=-\infty$, $ \lim_{s \to -\infty}\tau(s)=0$ and $ \lim_{s \to -\infty}\tau^{\prime}(s)=0$. 
	Thus, using Lemma \ref{novo} we have: if $\psi(0) \in H$ then $\lim_{s \to +\infty} 2c\alpha^{2}(s)=-1$ and $\lim_{s \to +\infty} 2\eta^{2}(s)=-c,$ and if $\psi(0) \in C$ then $\lim_{s \to +\infty} \alpha^{2}(s)=\lim_{s \to +\infty} \eta^{2}(s)=0.$ Hence, $\left(-{1}/{\sqrt{-2c}},0,-{c}/{\sqrt{-2c}} \right)$ and $(0,0,0)$ are non attractor singular solutions of \eqref{s2} which contradicts  Lemmas \ref{c4-1l1} and \ref{c4-1l6}. Therefore, $\alpha(s)$ is unbounded on $(\omega_{-}, \overline{s})$. Thus, it follows from item \it ii) \rm of  Lemma \ref{ln3} that there exists $s _1 \in (\omega_{-},\overline{s})$ such that the functions $\alpha(s)$ and $\tau(s)$ are monotone on $(\omega_{-},s_1)$. Hence, $ \lim_{s \to \omega_{-}} \alpha(s)=-\infty$ and there exists $\hat{s} \in (\omega_{-},s_1)$ such that $\tau(s)>0$ for all $s<\hat{s}$. Therefore, Lemma \ref{g3} implies  that either $\lim_{s \to \omega_{-}}c+a\tau(s)=+\infty$ or $\lim_{s \to -\infty}c+a\tau(s)=\lim_{s \to -\infty}\eta(s)=0$. Moreover, if $\lim_{s \to \omega_{-}}c+a\tau(s)=+\infty$ then we can choose $s_1$ such that $\eta^{\prime}(s)=-[c+a\tau(s)]\tau(s)<0$ for all $s<s_1$ and thus, $\lim_{s \to \omega_{-}}\eta(s)=+\infty$.

\it ii) \rm When $c<0$ and $\psi\in S$, then  $\alpha(s)$ and $\eta(s)$ may change sign. Moreover, it follows from Lemma \ref{tri} that there are no singular solutions in $S$.  Lemma \ref{c40l4extra} implies that $\lim_{s \to \omega_{-}} |\alpha(s)|=+\infty$ and $ \lim_{s \to \omega_{+}} |\alpha(s)|=+\infty$. Thus, the result follows from Lemma \ref{g3}.
\end{proof}

As an immediate consequence of Lemmas \ref{c4-1l6} and  \ref{c4-1l8l14}, we get the following. 

\begin{corollary}\label{coroc5_6_9}
 Assume that a curve $X: I \rightarrow Q^{2}_{+}$, $s \in I$ is a self-similar solution to the CF with  curvature $k(s)$ that corresponds to   a non trivial solution $\psi(s)=(\alpha(s),\tau(s),\eta(s))$  of (\ref{s2}), defined on the maximal interval $I=(\omega_{-}, \omega_{+})$, $a>0$, $c<0$ with  initial condition $\psi(0)\in H \cup C\cup S$. Then 

i) If $\psi(0)\in H\cup C$, then  $\omega_+=+\infty$, $ \lim_{s \to +\infty}k(s)=c$ and \;  either $\lim_{s \to -\infty}k(s)=0$ or $\lim_{s \to \omega_{-}}k(s)=+\infty.$	
	
ii) 	If $\psi(0)\in S$, then at each end of $X$ the curvature either converges to zero or it is unbounded. 
	\end{corollary}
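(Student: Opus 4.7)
The plan is to derive this statement directly by translating the conclusions of Lemmas \ref{c4-1l6} and \ref{c4-1l8l14} into statements about the curvature. Since $X$ is a self-similar solution to the CF corresponding to the solution $\psi(s)=(\alpha(s),\tau(s),\eta(s))$ of the system \eqref{s2} with parameters $a>0$ and $c<0$, the characterization given by Theorem \ref{c3t2} (together with Proposition \ref{c4p1}) yields the identity $k(s)=c+a\tau(s)$ for all $s\in I$. Consequently, the behavior of $k(s)$ at the endpoints of the maximal interval $I$ is determined entirely by the behavior of $c+a\tau(s)$, which is precisely what the two preceding lemmas describe.

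For part i), when $\psi(0)\in H\cup C$, Lemma \ref{c4-1l6} asserts that $\omega_{+}=+\infty$ and that $\psi(s)$ converges, as $s\to+\infty$, either to the singular point $p=(-1/\sqrt{-2c},0,-c/\sqrt{-2c})$ when $\psi(0)\in H$ or to $(0,0,0)$ when $\psi(0)\in C$. In both situations the second coordinate satisfies $\tau(s)\to 0$, so $k(s)=c+a\tau(s)\to c$. The behavior at $\omega_{-}$ then follows immediately from the dichotomy in Lemma \ref{c4-1l8l14} i): either $\lim_{s\to\omega_{-}}[c+a\tau(s)]=+\infty$, which translates into $\lim_{s\to\omega_{-}}k(s)=+\infty$, or $\lim_{s\to -\infty}[c+a\tau(s)]=0$, i.e.\ $\lim_{s\to-\infty}k(s)=0$.

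For part ii), when $\psi(0)\in S$, there are no singular solutions and $\alpha$ is unbounded on both ends of $I$, so one simply applies Lemma \ref{c4-1l8l14} ii) at each end separately. At $\omega_{-}$ the lemma yields the dichotomy that $|c+a\tau(s)|\to+\infty$ or $c+a\tau(s)\to 0$ as $s\to-\infty$; the same alternative holds at $\omega_{+}$. Using $k(s)=c+a\tau(s)$ once more, this says exactly that at each end of $X$ the curvature is either unbounded or converges to $0$.

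Since both parts are direct reformulations of the lemmas under the identification $k=c+a\tau$, there is no genuine obstacle; the only care needed is to keep track of which lemma applies to each case (initial condition in $H\cup C$ versus in $S$) and to observe that $\tau\to 0$ at $+\infty$ in part i) gives the additional conclusion $k\to c$, which is not present in part ii) because $S$ contains no singular points of the flow.
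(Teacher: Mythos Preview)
Your proof is correct and follows essentially the same approach as the paper, which simply records the corollary as an immediate consequence of Lemmas \ref{c4-1l6} and \ref{c4-1l8l14} via the identity $k(s)=c+a\tau(s)$. Your added remarks in part~ii) about the absence of singular solutions and the unboundedness of $\alpha$ are background already absorbed into Lemma~\ref{c4-1l8l14}~ii), so they are not needed here, but they are not wrong.
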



 We will now study the solutions of \eqref{s2} when  $c>0$. In this case, there are no singular solutions  in $H\cup C$. In the next lemma we study the functions $\eta(s)$ and $\tau(s)$, when $c>0$ and  $\psi(0)\in H$.

\begin{lemma}\label{c41l8}Let $\psi(s)=(\alpha(s),\tau(s),\eta(s))$ be a non trivial solution of (\ref{s2}) defined on the maximal interval $I=(\omega_{-}, \omega_{+})$, $a>0$, $c>0$ and initial condition $\psi(0)\in H$. Then $\omega_{+}=+\infty$, $ \lim_{s \to +\infty} c+a\tau(s)=\lim_{s \to +\infty} \eta(s)=0$. Moreover, $ \lim_{s \to \omega_{-}} \tau(s)=\lim_{s \to \omega_{-}} \eta(s)=+\infty$.
\end{lemma}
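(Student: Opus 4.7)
The plan is to combine Lemmas \ref{c40l4extra}, \ref{g3} and \ref{ln6} with sharp sign information for $\tau$. First I observe that $\alpha(s)$ has a unique critical point: Lemma \ref{c40l3} provides a global maximum $s_0 \in I$, and by Lemma \ref{ln1} i) every critical point of $\alpha$ is a global maximum, so a second critical point would force an intermediate local minimum strictly below the maxima, contradicting i). Hence $\tau = \alpha' > 0$ on $(\omega_-, s_0)$ and $\tau < 0$ on $(s_0, \omega_+)$, which is the sign information driving the rest of the argument.

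For the right end, Lemma \ref{c40l4extra} i) yields $\lim_{s \to \omega_+}\alpha(s) = -\infty$, so Lemma \ref{g3} leaves two possibilities: either (A) $\lim_{s \to \omega_+}\eta(s) = \lim_{s \to \omega_+}|c + a\tau(s)| = +\infty$ (using $\eta > 0$ on $H$), or (B) $\omega_+ = +\infty$ with $\lim_{s \to +\infty}\eta(s) = \lim_{s \to +\infty}(c + a\tau(s)) = 0$. Case (B) is exactly the desired conclusion. To rule out (A): since $\tau < 0$ on $(s_0, \omega_+)$ and $\tau$ is monotone near $\omega_+$ by Lemma \ref{ln6} i), the divergence $|c + a\tau| \to +\infty$ forces $\tau \to -\infty$. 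Hence for $s$ near $\omega_+$ both $\tau < 0$ and $c + a\tau < 0$, so $\eta'(s) = -(c + a\tau(s))\tau(s) < 0$; consequently $\eta$ is bounded above near $\omega_+$, contradicting $\eta \to +\infty$. Thus (B) holds.

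For the left end, the analogous alternatives (A$'$) and (B$'$) from Lemma \ref{g3} apply at $\omega_-$. Case (B$'$) would demand $\omega_- = -\infty$ and $\tau \to -c/a < 0$ as $s \to -\infty$; but $\tau > 0$ on $(\omega_-, s_0)$ and $\tau$ is monotone near $\omega_-$ by Lemma \ref{ln6} i), so any limit of $\tau$ at $\omega_-$ is $\geq 0$, contradicting $-c/a < 0$. Therefore (A$'$) holds: $\eta \to +\infty$ and $|c + a\tau| \to +\infty$. Combined with $\tau > 0$ on $(\omega_-, s_0)$, this forces $\tau \to +\infty$ at $\omega_-$.

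The delicate step is ruling out (A) at $\omega_+$; the remainder is essentially sign bookkeeping, matching the dichotomy of Lemma \ref{g3} against the monotonicity afforded by Lemma \ref{ln6} i) and the definite signs of $\tau$ coming from the unique maximum of $\alpha$. The decisive observation is that once $\tau$ and $c + a\tau$ are both large and negative, the ODE $\eta' = -(c + a\tau)\tau$ forces $\eta$ to decrease, which is incompatible with its putative divergence to $+\infty$.
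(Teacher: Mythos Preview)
Your proof is correct and rests on the same core lemmas (Lemmas \ref{c40l3}, \ref{c40l4extra}, \ref{g3}, \ref{ln6}) as the paper, but the organization is different. The paper first proves an intermediate \emph{Claim}: there is a unique $s_1>s_0$ with $c+a\tau(s_1)=0$ and $c+a\tau<0$ thereafter; this is established by its own contradiction argument (if $c+a\tau>0$ persisted on $(s_0,\omega_+)$, then $\eta$ would be increasing while $2\alpha\eta+\tau^2=-1$ together with $\alpha\to-\infty$ forces $\tau^2\to+\infty$, contradicting $-c/a<\tau<0$). With the Claim in hand, $\eta$ is eventually positive and decreasing, hence bounded, and then Lemma \ref{g3} yields the conclusion at $\omega_+$. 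You instead invoke the dichotomy of Lemma \ref{g3} immediately and discard alternative (A) by the short observation that $\tau\to-\infty$ makes $\eta'=-(c+a\tau)\tau<0$ eventually, so $\eta$ cannot diverge to $+\infty$. At the left end the paper bounds $\eta$ from below by $\eta(s_0)>0$ and reads $\tau^2\to+\infty$ off the invariant $2\alpha\eta+\tau^2=-1$; you again apply the dichotomy of Lemma \ref{g3} and rule out (B$'$) because $\tau>0$ on $(\omega_-,s_0)$ is incompatible with $\tau\to -c/a<0$. Your route is a little more symmetric and economical; the paper's route has the advantage of extracting the additional structural fact that $c+a\tau$ vanishes exactly once on $(s_0,\omega_+)$.
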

\begin{proof}
 Lemma \ref{c40l3} implies that there exists a global maximum point $s_0$ of $\alpha(s)$. Thus, $\alpha(s)$ is a decreasing (resp. increasing) function on  $(s_0, \omega_{+})$ i.e. $\tau(s)=\alpha'(s)<0$ for all $s>s_0$ (resp. $( \omega_{-},s_0)$ i.e. $\tau(s)>0$ for all $s<s_0$ ). Moreover, from Lemma \ref{ln3} we have that $\tau(s)$ does not have critical point on $(\omega_{-},s_0)$ and it has at  most one critical point on $[s_0,\omega_+)$.  
	
\bf Claim. \it  There exists a unique $s_1>s_0$ such that $c+a\tau(s_1)=0$ and $c+a\tau(s)<0$ for all $s>s_1$. \rm In fact, assume by contradiction that there exists a solution  $\psi(s) \in S$, $s \in I$, of \eqref{s2} such that $c+a\tau(s)>0$ and $\tau(s)<0$ for all $s>s_0$. Then the third  equation of \eqref{s2} implies that the positive function $\eta(s)$ is increasing on $(s_0, \omega_{+})$.  It follows from Lemma \ref{c40l4extra} that $\alpha(s)$ is unbounded on $(s_0, \omega_{+})$. Since $\alpha(s)$ is an increasing function on $(s_0, \omega_{+})$ and $2\alpha(s)\eta(s)+\tau^{2}(s)=-1$, then $\lim_{s \to \omega_{+}} 2\alpha(s)\eta(s)=-\infty$ and $\lim_{s \to \omega_{+}} \tau^{2}(s)=+\infty$, which contradicts the fact that $c+a\tau(s)>0$ with $\tau(s)<0$ and $c>0$ for all $s>s_0$. Therefore, there exists $s_1 \in I$ such that $c+a\tau(s_1)=0$. Suppose that there exist $s_1, s_2 \in (s_0,\omega_{+})$ such that $c+a\tau(s_1)=c+a\tau(s_2)=0$ and $c+a\tau(s)\neq 0$ for all $s \in (s_1,s_2)$. Thus, $s_1$ and $s_2$ are consecutive critical points and local maximum points of $\eta(s)$, this is a contradiction. Therefore, $s_1$ is unique.  

Our Claim implies that $[c+a\tau(s)]\tau(s)>0$ for all $s>s_1$. Thus, \eqref{s2} implies that the positive function    $\eta(s)$ is decreasing on $(s_1,\omega_{+})$ i.e. $\eta(s)$ is bounded on $(s_1,\omega_{+})$. Hence, it follows from Lemma \ref{g3} that $ \lim_{s \to +\infty} c+a\tau(s)=\lim_{s \to +\infty} \eta(s)=0$.

Finally, since $\tau(s)>0$ and the functions $\alpha(s)$, $\tau(s)$ and $\eta(s)$ are monotone on $(\omega_{-},s_0)$, then it follows from \eqref{s2} that $\eta(s)$ is a decreasing positive function on $(\omega_{-},s_0)$. Hence, $ \lim_{s \to \omega_{-}} \eta(s)\alpha(s)=-\infty$. Therefore, $ \lim_{s \to \omega_{-}} \eta(s)=\lim_{s \to \omega_{-}} \tau(s)=+\infty.$
\end{proof}

\begin{lemma}\label{c41l9}
	Let $\psi(s)=(\alpha(s),\tau(s),\eta(s))$ be a non trivial solution of (\ref{s2}) defined on the maximal interval $I=(\omega_{-}, \omega_{+})$, $a>0$, $c>0$ and initial condition $\psi(0)\in C$. Then either $\lim_{s \to \omega_{-}}\eta(s)=\lim_{s \to \omega_{-}} c+a\tau(s)=+\infty$ or $\lim_{s \to -\infty} c+a\tau(s)=c$, and $\lim_{s \to +\infty} c+a\tau(s)=\beta$ where $\beta \in \{0,c\}$. Moreover, if $\alpha(s)$ is bounded on $(\omega_{-},\overline{s})$ (resp. $(\overline{s},\omega_{+})$), $\overline{s} \in I$ then $\lim_{s \to -\infty} \alpha^2(s)+\tau^{2}(s)+\eta^{2}(s)=0$ (resp. $\lim_{s \to +\infty} \alpha^2(s)+\tau^{2}(s)+\eta^{2}(s)=0$).
\end{lemma}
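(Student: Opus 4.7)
The plan is to first prove the ``moreover'' clause and then use it together with the monotonicity provided by Lemma \ref{ln6}(i), the boundedness dichotomy of Lemma \ref{c41}(i), Lemma \ref{g2}, and the Lemma \ref{g3} alternative to classify the endpoint behaviour. The structural constraints coming from $\psi(s)\in C$, namely $2\alpha\eta+\tau^{2}=0$ with $\alpha\leq 0$ and hence $\eta\geq 0$, together with $c>0$, will be used throughout to discard the ``spurious'' alternative offered by Lemma \ref{g3} at each end.

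For the ``moreover'' clause at $\omega_{+}$, assume $\alpha$ is bounded on $(\overline{s},\omega_{+})$. Lemma \ref{c41}(i) then gives that $\tau$ and $\eta$ are bounded and $\omega_{+}=+\infty$, and Lemma \ref{g2} yields $\tau(s)\to 0$ and $\tau^{\prime}(s)\to 0$ as $s\to+\infty$. Specializing the identities \eqref{novo1} and \eqref{novo2} of Lemma \ref{novo} to $\psi(0)\in C$,
\begin{equation*}
[c+a\tau]\alpha^{2}-\alpha\tau^{\prime}+\tfrac{\tau^{2}}{2}=0,\qquad \eta^{2}+\eta\tau^{\prime}+\tfrac{[c+a\tau]\tau^{2}}{2}=0,
\end{equation*}
and passing to the limit gives $c\alpha^{2}\to 0$ and $\eta^{2}\to 0$; since $c>0$ this forces $\alpha^{2}+\tau^{2}+\eta^{2}\to 0$. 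The argument at $\omega_{-}$ is identical.

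To treat $\omega_{+}$, Lemma \ref{ln6}(i) supplies $s_{2}\in I$ with $\alpha,\tau,\eta$ monotone on $(s_{2},\omega_{+})$. If $\alpha$ is bounded on that tail, the previous step applies, $\tau\to 0$, and hence $c+a\tau\to c$, giving $\beta=c$. If $\alpha$ is unbounded, $\alpha\leq 0$ together with monotonicity force $\alpha\to-\infty$ and $\tau=\alpha^{\prime}\leq 0$ near $\omega_{+}$ (otherwise $\alpha$ would be squeezed between $\alpha(s_{2})$ and $0$). Lemma \ref{g3} then offers two alternatives. In the divergent one, monotonicity of $\tau\leq 0$ together with $|c+a\tau|\to+\infty$ forces $\tau\to-\infty$, so $c+a\tau<0$ and $\tau<0$ eventually; then $\eta^{\prime}=-(c+a\tau)\tau<0$, making $\eta\geq 0$ eventually decreasing and bounded, contradicting $\eta\to+\infty$. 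Only the convergent alternative remains: $\eta\to 0$ and $c+a\tau\to 0$, giving $\beta=0$; since $\tau\to-c/a$ is then bounded and $\alpha^{\prime}=\tau$, $\alpha$ cannot blow up in finite time, so $\omega_{+}=+\infty$.

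The analysis at $\omega_{-}$ is parallel. Lemma \ref{ln6}(i) gives $s_{1}$ with $\alpha,\tau,\eta$ monotone on $(\omega_{-},s_{1})$. If $\alpha$ is bounded there, the first step yields $\omega_{-}=-\infty$ and $c+a\tau\to c$. If $\alpha$ is unbounded, monotonicity together with $\alpha\leq 0$ force $\tau\geq 0$ near $\omega_{-}$ (otherwise $\alpha$ would be bounded between $\alpha(s_{1})$ and $0$), so $c+a\tau\geq c>0$ there. The convergent alternative of Lemma \ref{g3} would require $c+a\tau\to 0$, contradicting $c+a\tau\geq c$; therefore the divergent alternative holds, and the signs $\eta\geq 0$ and $c+a\tau\geq c$ upgrade it to $\eta\to+\infty$ and $c+a\tau\to+\infty$. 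The main obstacle is the careful sign bookkeeping needed to rule out the spurious alternative of Lemma \ref{g3} at each endpoint; the remainder is an assembly of the already-established lemmas.
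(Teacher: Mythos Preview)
Your proof is correct and follows essentially the same architecture as the paper's: split each endpoint by whether $\alpha$ is bounded, use Lemma~\ref{c41}(i), Lemma~\ref{g2}, and the $C$-specializations of \eqref{novo1}--\eqref{novo2} in the bounded case, and in the unbounded case exploit $\alpha\le 0$, $\eta\ge 0$, $c>0$ to pin down signs. The only real difference is packaging: at $\omega_{-}$ the paper argues directly from $\tau^{2}=-2\alpha\eta$ (with $\alpha\to-\infty$ and $\eta$ positive decreasing) to conclude $\tau\to+\infty$, whereas you invoke Lemma~\ref{g3} and kill the convergent branch using $c+a\tau\ge c>0$; at $\omega_{+}$ the paper first shows by contradiction that $\tau$ stays bounded and then applies Lemma~\ref{g3}, while you apply Lemma~\ref{g3} first and eliminate the divergent branch via the sign computation $\eta'=-(c+a\tau)\tau<0$. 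Both routes are equivalent and equally short.
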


\begin{proof}
	Item \it i) \rm of  Lemma \ref{ln6} implies that there exist $s_1,s_2 \in I$ such that $\alpha(s)$, $\tau(s)$ and $\eta(s)$ are monotone functions on $(\omega_{-},s_1)$ and $(s_2, \omega_{+})$.
	 If $\alpha(s)$ is bounded on $(\omega_{-},s_1)$, then it follows from Lemma \ref{g2} that $\omega_{-}=-\infty$, $\lim_{s \to -\infty}\tau(s)=0$ and $\lim_{s \to -\infty}\tau^{\prime}(s)=0$ i.e. $\lim_{s \to -\infty} c+a\tau(s)=c$.  Lemma \ref{novo} implies that $\lim_{s \to -\infty} \alpha^{2}(s)=\lim_{s \to -\infty} \eta^{2}(s)=0$. 
	 If $\alpha(s)$ is unbounded on $(\omega_{-},s_1)$, then $\lim_{s \to \omega_{-}}\alpha(s)=-\infty$ and it is an increasing function on $(\omega_{-},s_1)$ i.e. $\tau(s)>0$ for all $s<s_1$. Thus, $[c+a\tau(s)]\tau(s)>0$ and $\eta(s)$ is a decreasing positive function  for all $s<s_1$. Since $2\alpha(s)\eta(s)+\tau^{2}(s)=1$ it follows  that $\tau(s)$ is unbounded on $(\omega_{-},s_1)$ and $\lim_{s \to \omega_{-}}c+a\tau(s)=+\infty$.
	
 If  $\alpha(s)$ is bounded on $(s_2,\omega_{+})$, similarly to the  previous case we can prove that  then $\omega_{+}=+\infty$ $, \lim_{s \to +\infty}\tau(s)=0$ and $\lim_{s \to -\infty} \alpha^{2}(s)=\lim_{s \to -\infty} \eta^{2}(s)=0$. 
	Now, suppose that $\alpha(s)$ is unbounded on $(s_2,\omega_{+})$, then $ \lim_{s \to \omega_{+}} \alpha(s)=-\infty$, $\tau(s)<0$ for all $s>s_2$. Assume by contradiction that $\tau(s)$ is unbounded on $(s_2,\omega_{+})$. Thus, there exists $s_3>s_2$ such that $[c+\tau(s)]\tau(s)>0$ for all $s>s_3$ and $\eta(s)$ is a positive decreasing function on $(s_3, \omega_{+})$ i.e. $\eta(s)$ is bounded on $(s_3, \omega_{+})$ which contradicts the third equation of \eqref{s2}, because $-\int_{s_3}^{s}[c+a\tau(s)]\tau(s)ds=\eta(s)-\eta(s_3)$. Hence, $\tau(s)$ is bounded on $(s_2,\omega_{+})$. Since $ \lim_{s \to \omega_{+}} \alpha(s)=-\infty$ then  equation $2\alpha(s)\eta(s)+\tau^{2}(s)=0$ implies that $\lim_{s \to \omega_{+}} \eta(s)=0$ and $\eta(s)$ is bounded on $(s_2,\omega_{+})$.  Therefore, Lemma \ref{g3} implies that $\lim_{s \to +\infty} [c+a\tau(s)]=0.$
\end{proof}

Finally, we can study the case $ \psi(0) \in S$. Note that, when $a=c$ then $\psi(s)=(-s,-1,0)$ is a trivial solution of $(\ref{s2})$ and it represents a parable in $Q^2_{+}$. i.e. a trivial solution to the CF.

\begin{lemma}\label{c41ld1}
	Consider  $\Phi:S\rightarrow TS \subset \R^{3}$,  given  by $\Phi(\alpha,\tau,\eta)=\left(\tau,c\alpha+a\tau\alpha-\eta,-c\tau-a\tau^{2}\right)$, 
	where $a,c>0$ and $S$ is given by \eqref{h}. Then $ p=\left(-{1}/{\sqrt{2c}},0,-{c}/{\sqrt{2c}}\right)$ and $-p$ are singular points of $\Phi$, and the eigenvalues of $d\Phi_{p}$ and $d\Phi_{-p}$ are
	\begin{equation} \label{la}
	\displaystyle \lambda_{p}=\frac{a\pm \sqrt{a^{2}+16c^{2}}}{2\sqrt{2c}} \hspace{1cm} \text{and} \hspace{1cm} \lambda_{-p}=\frac{-a\pm \sqrt{a^{2}+16c^{2}}}{2\sqrt{2c}}.
	\end{equation}
	\end{lemma}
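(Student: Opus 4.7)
The plan is to mimic the proof of Lemma \ref{c4-1l1} almost verbatim, adapting it to the case $c>0$ and to the invariant set $S=\{2\alpha\eta+\tau^{2}=1\}$ rather than $H$.

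First I would locate the singular points. The equation $\Phi(\alpha,\tau,\eta)=0$ forces $\tau=0$ from the first component, and then $c\alpha=\eta$ from the second, while the third component vanishes automatically. Imposing the constraint $2\alpha\eta+\tau^{2}=1$ then gives $2c\alpha^{2}=1$, hence $\alpha=\pm 1/\sqrt{2c}$. These two solutions are precisely $p=\bigl(-1/\sqrt{2c},0,-c/\sqrt{2c}\bigr)$ and $-p=\bigl(1/\sqrt{2c},0,c/\sqrt{2c}\bigr)$.

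Next I would compute the Jacobian of $\Phi$ on the ambient $\R^{3}$, namely
\[
J\Phi(\alpha,\tau,\eta)=\begin{pmatrix} 0 & 1 & 0 \\ c+a\tau & a\alpha & -1 \\ 0 & -c-2a\tau & 0 \end{pmatrix},
\]
and identify the tangent space to $S$. Writing $F(\alpha,\tau,\eta)=2\alpha\eta+\tau^{2}$, one has $\nabla F=(2\eta,2\tau,2\alpha)$, which at $\pm p$ is proportional to $(c,0,1)$. Hence $T_{\pm p}S=\{(w_{1},w_{2},w_{3}):cw_{1}+w_{3}=0\}$, and I would pick the natural basis $v_{1}=(1,0,-c)$, $v_{2}=(0,1,0)$. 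Because $S$ is invariant under the flow of $\Phi$ (as used throughout the preceding lemmas via the conservation law $2\alpha\eta+\tau^{2}=\text{const}$), the restriction $d\Phi_{\pm p}:T_{\pm p}S\to T_{\pm p}S$ is well-defined.

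The final step is the eigenvalue computation. Applying $J\Phi|_{\pm p}$ to $v_{1}$ gives $2c\,v_{2}$, and applying it to $v_{2}$ gives $v_{1}\pm(a/\sqrt{2c})v_{2}$ (the sign depending on $\alpha_{\pm p}=\mp 1/\sqrt{2c}$). Thus in the basis $(v_{1},v_{2})$ the restricted Jacobian becomes
\[
\begin{pmatrix} 0 & 1 \\ 2c & \pm a/\sqrt{2c} \end{pmatrix},
\]
whose characteristic polynomial is $\lambda^{2}\mp(a/\sqrt{2c})\lambda-2c=0$. Solving and simplifying $\sqrt{a^{2}/(2c)+8c}=\sqrt{a^{2}+16c^{2}}/\sqrt{2c}$ yields the eigenvalues in \eqref{la}.

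The computation is entirely elementary and the only point requiring care is tracking signs: because $c>0$ here (in contrast with Lemma \ref{c4-1l1}) the discriminant $a^{2}+16c^{2}$ is automatically positive, so both eigenvalues at each singular point are real and of opposite sign. In particular $\pm p$ are hyperbolic saddles, a fact that will presumably be used in the analysis that follows to rule out any solution of \eqref{s2} starting in $S$ from converging to a singular point.
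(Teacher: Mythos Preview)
Your approach is correct and is exactly what the paper intends; the paper's own proof is a single sentence that just records the conditions $\tau=0$, $c\alpha=\eta$, $2\alpha\eta+\tau^{2}=1$ and omits the eigenvalue computation entirely, so your write-up is in fact more complete.

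One small point of sign bookkeeping: since $\alpha_{p}=-1/\sqrt{2c}$, the $(2,2)$ entry of the restricted Jacobian at $p$ is $a\alpha_{p}=-a/\sqrt{2c}$, so the $2\times2$ matrix at $p$ is
\[
\begin{pmatrix} 0 & 1 \\ 2c & -a/\sqrt{2c} \end{pmatrix},
\]
giving $\lambda=\dfrac{-a\pm\sqrt{a^{2}+16c^{2}}}{2\sqrt{2c}}$ at $p$ and $\lambda=\dfrac{a\pm\sqrt{a^{2}+16c^{2}}}{2\sqrt{2c}}$ at $-p$. In other words, a careful computation yields the two formulas in \eqref{la} with the labels $p$ and $-p$ interchanged relative to the statement. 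This is immaterial for the analysis that follows: the two eigenvalue pairs are identical as sets up to relabeling, and what matters is precisely your concluding observation that each pair consists of two real numbers of opposite sign, so both $p$ and $-p$ are hyperbolic saddles.
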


\begin{proof} It follows from the fact that 
	the singular points of $\Phi$ satisfy  $\tau=0$, $c\alpha=\eta$ and  $1=2\alpha\eta+\tau^{2}$.
\end{proof}

The singular points of $\Phi$, $\pm p$, are saddle points. Each singular solution of \eqref{s2} with $c>0$ and $\psi(0) \in S$ corresponds to a hyperbole obtained by intersecting  the  light cone with  one of the planes $x_3\sqrt{2c}= \pm 1$.
Moreover,  there exist $q, \overline{q} \in S\setminus \{p\}$ such that $ \lim_{s \to +\infty}\psi(s,q)=\pm p $ and $ \lim_{s \to -\infty}\psi(s,\overline{q})=\pm p $. Define the stable and unstable sets
\begin{equation}\label{w2}
\begin{array}{l}
W^{s}(\pm p)=\{q \in S: \lim_{s \to +\infty}\psi(s,q)=\pm p\},\\
W^{u}(\pm p)=\{q\in S: \lim_{s \to -\infty}\psi(s,q)=\pm p\}.
\end{array}
\end{equation}

Our next lemma provides the behaviour of $\alpha(s)$  and $\tau(s)$   
at the boundary of the maximal interval of a non trivial solution 
of \eqref{s2}. 

\begin{lemma}\label{c41ld7d8}
Let $\psi(s)=(\alpha(s),\tau(s),\eta(s))$ be a non trivial solution of (\ref{s2}) defined on the maximal interval $I=(\omega_{-}, \omega_{+})$, $a>0$, $c>0$ and initial condition $\psi(0)\in S$. Consider $W^{u}(\pm p)$ and $W^{s}(\pm p)$ as in \eqref{w2}. 

i) If $\psi(0) \in S\setminus W^{u}(\pm p)$ (resp. $S\setminus W^{s}(\pm p)$) then $\lim_{s \to \omega_{-}}|\alpha(s)|=+\infty$ (resp. $ \lim_{s \to \omega_{+}}|\alpha(s)|=+\infty$). 

ii)Either $\lim_{s \to \omega_{-}} |c+a\tau(s)|=+\infty$ or $\lim_{s \to -\infty} c+a\tau(s)=\beta$ and  either $\lim_{s \to \omega_{+}} |c+a\tau(s)|=+\infty$ or $\lim_{s \to +\infty} c+a\tau(s)=\beta$, where the constant $\beta \in \{0,c\}$.
\end{lemma}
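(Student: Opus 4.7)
The plan is to reduce both items to the conserved identities of Lemma \ref{novo} applied on $S$, combined with the bounded/unbounded dichotomies from Lemmas \ref{g2} and \ref{g3}, and the monotonicity at the ends furnished by Lemma \ref{ln6}(i) (valid since $c>0$).

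For part (i), I would prove the claim at the left end $s\to\omega_-$; the right end is analogous. Argue by contradiction: suppose $\psi(0)\in S\setminus W^u(\pm p)$ and yet $\alpha(s)$ is bounded on some $(\omega_-,\bar s)$. Lemma \ref{g2} then forces $\omega_-=-\infty$, $\tau(s)\to 0$ and $\tau'(s)\to 0$. Passing to the limit in \eqref{novo1} and \eqref{novo2} (with right-hand sides $1/2$ and $[c+a\tau(s)]/2$, since $\psi(0)\in S$) yields $c\,\lim\alpha^2(s)=1/2$ and $\lim\eta^2(s)=c/2$. Thus $\alpha(s)\to\pm 1/\sqrt{2c}$ and $\eta(s)\to\pm c/\sqrt{2c}$, and the constraint $2\alpha(s)\eta(s)+\tau^2(s)=1$, giving $\alpha\eta\to 1/2>0$, forces the two signs to coincide. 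Hence $\lim_{s\to-\infty}\psi(s)\in\{p,-p\}$, with $p$ as in Lemma \ref{c41ld1}, contradicting $\psi(0)\notin W^u(\pm p)$. Therefore $\alpha(s)$ is unbounded on $(\omega_-,\bar s)$, and Lemma \ref{ln6}(i) supplies an interval $(\omega_-,s_1)$ on which $\alpha$ is monotone, so $\lim_{s\to\omega_-}|\alpha(s)|=+\infty$.

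For part (ii), I split cases at the left end. If $\psi(0)\in W^u(\pm p)$, then by definition of $W^u(\pm p)$ one has $\lim_{s\to-\infty}\psi(s)=\pm p$, in particular $\omega_-=-\infty$ and $\lim_{s\to-\infty}\tau(s)=0$, hence $\lim_{s\to-\infty}(c+a\tau(s))=c$ and we take $\beta=c$. If instead $\psi(0)\in S\setminus W^u(\pm p)$, part (i) gives $|\alpha(s)|\to+\infty$ as $s\to\omega_-$, so Lemma \ref{g3} applies and delivers the dichotomy: either $\lim_{s\to\omega_-}|c+a\tau(s)|=+\infty$, or $\lim_{s\to-\infty}(c+a\tau(s))=0$, in which case $\beta=0$. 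The right end is handled identically by replacing $W^u(\pm p)$ with $W^s(\pm p)$ throughout and using part (i) to ensure $|\alpha(s)|\to+\infty$ as $s\to\omega_+$ whenever $\psi(0)\notin W^s(\pm p)$.

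The main obstacle lies in the bounded case of (i): once Lemma \ref{g2} pins down the limits of $\alpha^2$ and $\eta^2$, one must exclude a ``crossed'' combination of signs that would not correspond to a singular point of $\Phi$. This is exactly what the constraint $2\alpha\eta+\tau^2=1$ provides in the limit, since $\tau\to 0$ forces $\alpha\eta\to 1/2>0$; together with the singular-point analysis of Lemma \ref{c41ld1} this pins the limit to $\{p,-p\}$ and closes the argument.
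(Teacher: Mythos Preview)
Your argument is correct and follows essentially the same route as the paper: for (i) you reproduce the contradiction argument of Lemma~\ref{c40l4extra}(ii) (adapted to $c>0$, where the singular points $\pm p$ of Lemma~\ref{c41ld1} replace the nonexistence of singular solutions) and then use Lemma~\ref{ln6}(i) to upgrade unboundedness to $|\alpha|\to+\infty$; for (ii) you split into $W^u(\pm p)$ versus its complement and invoke Lemma~\ref{g3}, exactly as the paper does. Your treatment of (i) is in fact slightly more explicit than the paper's, since you compute the limits of $\alpha^2$ and $\eta^2$ from \eqref{novo1}--\eqref{novo2} and use the constraint $2\alpha\eta+\tau^2=1$ to match signs, whereas the paper simply refers back to the argument of Lemma~\ref{c40l4extra}.
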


\begin{proof} \it i) \rm  The limits are obtained by using the same arguments of the  proof of Lemma \ref{c40l4extra}. In order to prove 	
\it ii) \rm we note that Item \it i) \rm of Lemma \ref{ln6}  implies  that there exist $s_1,s_2 \in I$ such that $\alpha(s)$, $\tau(s)$ and $\eta(s)$ are monotone on $(\omega_{-},s_1)$ and $(s_2, \omega_{+})$. 
 If $\psi(0) \in W^{u}(\pm p)$ (resp. $\psi(0) \in W^{s}(\pm p)$) then by definition $\omega_{-}=-\infty$, $\lim_{s \to -\infty} \tau(s)=0$ and $\lim_{s \to -\infty} c+a\tau(s)=c$ (resp. $\omega_{+}=+\infty$, $\lim_{s \to +\infty} \tau(s)=0$ and $\lim_{s \to +\infty} c+a\tau(s)=c$). 
   If $\psi(0) \in S\setminus W^{u}(\pm p)$ (resp. $S\setminus W^{s}(\pm p)$) then it follows from \it i) \rm that $\lim_{s \to \omega_{-}}|\alpha(s)|=+\infty$ (resp. $ \lim_{s \to \omega_{+}}|\alpha(s)|=+\infty$). Thus, Lemma \ref{g3} implies that if $ \psi(0) \in S\setminus W^{u}(\pm p)$ (resp. $ \psi(0) \in S\setminus W^{s}(\pm p)$) then either $\lim_{s \to \omega_{-}} |c+a\tau(s)|=+\infty$ or $\lim_{s \to -\infty} c+a\tau(s)=0$ (resp. either $\lim_{s \to \omega_{+}} |c+a\tau(s)|=+\infty$ or $\lim_{s \to +\infty} c+a\tau(s)=0$). 
\end{proof}

As a consequence of the previous lemma, we can determine the behaviour of the curvature function  at each end of a self-similar solution to the CF 
on  $Q^{2}_{+}$, when $c>0$.    

\begin{corollary}\label{coroc10_11_12}
Let  $X: I \rightarrow Q^{2}_{+}$, $s \in I$ be a curve which  is a self-similar solution to the CF, with curvature $k(s)$, that corresponds to 
a non trivial solution $\psi(s)=(\alpha(s),\tau(s),\eta(s))$ of (\ref{s2}) defined on the maximal interval $I=(\omega_{-}, \omega_{+})$, $a>0$, $c>0$ and initial condition $\psi(0)\in H\cup C\cup S$. Then  
	
i) If $\psi(0)\in H$, then   $\omega_+=+\infty$, $\lim_{s \to \omega_{-}}k(s)=+\infty$ and $ \lim_{s \to +\infty}k(s)=0$.		

ii) If 	$\psi(0)\in C\cup S$ then at each end of $X$ either $k(s)$ is unbounded or it tends to $0$ or $c$. 
\end{corollary}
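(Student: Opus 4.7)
The plan is to observe that, since $X$ is a self-similar solution to the CF corresponding to $\psi(s)=(\alpha(s),\tau(s),\eta(s))$ with parameters $a>0$ and $c\in\R$, by Theorem \ref{c3t2} and Proposition \ref{c4p1} its curvature is
\[
k(s)=c+a\tau(s),\qquad s\in I.
\]
Hence the asymptotic behaviour of $k$ at each end of $I$ is completely determined by the asymptotic behaviour of the quantity $c+a\tau(s)$ along $\psi$, which has already been studied in Lemmas \ref{c41l8}, \ref{c41l9} and \ref{c41ld7d8} for $c>0$ in the three cases $\psi(0)\in H$, $\psi(0)\in C$, $\psi(0)\in S$. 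The proof is then a direct unpacking of those three lemmas.

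For part $i)$, assume $\psi(0)\in H$. First I invoke Lemma \ref{c41l8}, which asserts that $\omega_{+}=+\infty$ and $\lim_{s\to+\infty}\bigl(c+a\tau(s)\bigr)=0$; this immediately gives $\omega_{+}=+\infty$ and $\lim_{s\to+\infty}k(s)=0$. The same lemma also gives $\lim_{s\to\omega_{-}}\tau(s)=+\infty$, and since $a>0$ this forces $\lim_{s\to\omega_{-}}k(s)=\lim_{s\to\omega_{-}}(c+a\tau(s))=+\infty$, completing $i)$.

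For part $ii)$, I split according to whether $\psi(0)\in C$ or $\psi(0)\in S$. If $\psi(0)\in C$, Lemma \ref{c41l9} states that on the left end either $\lim_{s\to\omega_{-}}(c+a\tau(s))=+\infty$ or $\lim_{s\to-\infty}(c+a\tau(s))=c$, and on the right end $\lim_{s\to+\infty}(c+a\tau(s))=\beta$ with $\beta\in\{0,c\}$; translated to $k=c+a\tau$ this is exactly the claim that at each end $k$ is either unbounded or tends to an element of $\{0,c\}$. If instead $\psi(0)\in S$, Lemma \ref{c41ld7d8}$\,ii)$ gives, at each end separately, either $|c+a\tau(s)|\to+\infty$ or $c+a\tau(s)\to\beta$ with $\beta\in\{0,c\}$, yielding the same conclusion for $k$.

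Since each step merely rewrites $c+a\tau(s)$ as $k(s)$ and quotes an already-proved lemma, I do not anticipate any genuine obstacle; the only point requiring care is to verify that the three sub-cases $\psi(0)\in H$, $\psi(0)\in C$, $\psi(0)\in S$ exhaust all non trivial solutions of \eqref{s2} with $c>0$, which is immediate from the decomposition of initial conditions into the disjoint sets $H\cup C\cup S$ described after \eqref{h}.
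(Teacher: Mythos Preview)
Your proof is correct and follows exactly the same approach as the paper's own proof, which simply notes that $k(s)=c+a\tau(s)$ and invokes Lemmas \ref{c41l8}, \ref{c41l9} and \ref{c41ld7d8} for the cases $\psi(0)\in H$, $\psi(0)\in C$ and $\psi(0)\in S$, respectively. Your version is merely a more detailed unpacking of those citations.
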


\begin{proof} Since $k(s)=c+a\tau(s)$, $a>0$ $c>0$, 
{\it i)} and  {\it ii)} follow from  Lemmas \ref{c41l8}, \ref{c41l9} and \ref{c41ld7d8}. 
\end{proof}

We can now prove our main result

\begin{proof}[\bf Proof of Theorem \ref{c3t4}\rm]
 Consider any vector $v\in \mathbb{R}^3\setminus\{0\}$.  Without loss of generality we can assume that $v=ae$, where $a>0$ and 
	\begin{eqnarray*}
		e=	\left\{\begin{array}{l}
			(1,0,0)\hspace{0.3 cm} \text{if} \hspace{0.3 cm}v \hspace{0.3 cm} \text{is a timelike vector},\\
			(1,1,0)\hspace{0.3 cm} \text{if} \hspace{0.3 cm} v \hspace{0.3 cm}  \text{is a lightlike vector},\\
			(0,0,1)\hspace{0.3 cm} \text{if} \hspace{0.3 cm} v \hspace{0.3 cm}  \text{is a spacelike vector.}
		\end{array} \right.
	\end{eqnarray*}	 
	Let $\psi(s)=(\alpha(s),\tau(s),\eta(s))$ be a solution of the system \eqref{s2},  with  $c\in\R$,  defined on a maximal interval $I$ and initial condition $\psi(0) \in \R^{3}$ such that 		 
	\begin{eqnarray*}
		2\alpha(0)\eta(0)+\tau^{2}(0)=	\left\{\begin{array}{l}
			-1\hspace{0.3 cm} \text{if} \hspace{0.3 cm}v \hspace{0.3 cm} \text{is a timelike vector},\\
			0 \hspace{0.3 cm} \text{if} \hspace{0.3 cm} v \hspace{0.3 cm}  \text{is a lightlike vector},\\
			1 \hspace{0.3 cm} \text{if} \hspace{0.3 cm} v \hspace{0.3 cm}  \text{is a spacelike vector,}
		\end{array} \right.
	\end{eqnarray*}
It follows from Proposition \ref{c4p2} that, for each initial condition,  there is a non trivial self-similar  solution  to the CF,  $X(s)$ in $Q^2_+$, with curvature $k(s)=c+ a\tau(s)$. Thus, the initial conditions of $\alpha,\tau$ and $\eta$,  which are given by  two constants,  determine the self-similar solution to the CF. Therefore, for each fixed vector $v \in \R_{1}^{3}\setminus \{0\}$ and each  constant  $c\in\R$, there is a 2-parameter family of non trivial self-similar solutions to the CF in ${Q}^{2}_+$. These curves are solitons when $c=0$. Moreover, there are three    classes of such solutions corresponding to the three types of vectors $v$.
	
	 It follows from  item \it iii) \rm of Lemma \ref{ln4} that the curvature function $k(s)$ has at  most two zeros.  Moreover, 
Corollaries  \ref{coroc3_4}, \ref{coroc5_6_9} and \ref{coroc10_11_12}	
show that, at each end, the curvature function is unbounded or it tends to one of the two constants $\{ c,0\}$.
 
Let $Y(s)$ be the associated curve to $X(s)$.  When $k(s)\neq 0$ it follows from Remark \ref{c3t0} and Proposition \ref{c3p1} that $-Y(s)$ is a self-similar solution to the ICF on $Q_{+}^{2}$. Moreover, if $s_1,s_2 \in I$ are the zeros (resp. $s_1$ is the zero) of $k(s)$ on $I$, then $Y_1=-Y|_{(\omega_{-},s_1)}$, $Y_2=-Y|_{((s_1,s_2)}$ and $Y_3=-Y|_{(s_2, \omega_{+})}$ (resp. $Y_1=-Y|_{(\omega_{-},s_1)}$ and $Y_2=-Y|_{((s_1,\omega_{+})}$) are self-similar solutions to the ICF on $Q_{+}^{2}$.  Therefore, given $v \in \R_{1}^{3}\setminus \{0\}$ and $c \in \R$, there exist  a 2-parameter family of self-similar solutions to the ICF in $Q_{+}^{2}$. Any such curve may have at most three connected components. Since $\tilde{k}(s)=k^{-1}(s)$ is the curvature of $-Y(s)$ then at each end of $-Y(s)$ the curvature $\tilde{k}(s)$ is unbounded or it tends to one of the following constants $\{1/c,0\}$.  
\end{proof}
	
We conclude this section by providing explicit soliton solutions to the CF and to the ICF. They are obtained by considering $c=0$ and $v$ a light like vector. 

\begin{proposition}\label{c4ld1}
	Let $\tilde{X}: I \subset \R \rightarrow Q_{+}^{2}$ defined by $\tilde{X}(s)=(\tilde{x}_1(s),\tilde{x}_2(s),\tilde{x}_3(s))$, $s \in I$ be a soliton solution to the ICF in $Q^{2}_{+}$ that corresponds the solution $\tilde{\psi}(s)=(\tilde{\alpha}(s),\tilde{\tau}(s),\tilde{\eta}(s))$ of \eqref{s2} with $\tilde{\psi}(0) \in C$. Then $-\tilde{Y}(s)$ is a soliton solution to the CF in $Q_{+}^{2}$, where $\tilde{Y}(s)=(\tilde{y}_1(s),\tilde{y}_2(s),\tilde{y}_3(s))$ is the associated curve of $\tilde{X}(s)$, and for $s$ such that  $s<a\tilde{\eta}_0 $, $\tilde{n}_0=\tilde{\eta}(0)>0$,  we have
	\begin{equation*}\label{ws}
	\tilde{X}(s)=\left(-\frac{\tilde{x}^{2}_3(s)+\tilde{\alpha}^{2}(s)}{2\tilde{\alpha}(s)}, \frac{-\tilde{x}^{2}_3(s)+\tilde{\alpha}^{2}(s)}{2\tilde{\alpha}(s)},\tilde{x}_3(s)    \right),  \quad  \tilde{\alpha}(s)=-\frac{\tilde{\tau}^{2}(s)}{2\tilde{\eta}(s)}, \quad \tilde{\eta}(s)=-\frac{s}{a}+\tilde{\eta}_0,
	\end{equation*}
\begin{equation*}
\tilde{Y}(s)= \left(-\frac{\tilde{y}^{2}_3(s)+\tilde{\eta}^{2}(s)}{2\tilde{\eta}(s)}, \frac{-\tilde{y}^{2}_3(s)+\tilde{\eta}^{2}(s)}{2\tilde{\eta}(s)}  ,\tilde{y}_3(s)  \right), \quad  \tilde{\tau}(s)=\frac{2a\tilde{\eta}^{2}(s)}{3}+\sqrt{\tilde{\eta}(s)}\left(\frac{3\tilde{\tau}_0-2a\tilde{\eta}^{2}_0}{3\sqrt{\tilde{\eta}_0}}\right),
\end{equation*}
\begin{equation*}
	\tilde{y}_{3}(s)=\frac{\tilde{\eta}(s)\tilde{x}_{3}(s)-\tilde{\tau}(s)}{\tilde{\alpha}(s)}, \quad \tilde{x}_{3}(s)=\left(\tilde{\eta}^{3/2}(s)+D\right)^{2}\left(-\int{(\tilde{\eta}^{3/2}(s)+D)^{-2}}ds+C\right)  
\end{equation*}
and $\displaystyle D=\frac{3}{2a}\left(\frac{3\tilde{\tau}_0-2a\tilde{\eta}^{2}_0}{3\sqrt{\tilde{\eta}_0}}\right).$ Moreover, if $D>0$
		\begin{equation*}
			\int{(\tilde{\eta}^{3/2}(s)+D)^{-2}}ds=
\frac{1} { 9{D}^{\frac{4}{3}} }\log 
\frac{ \tilde{\eta}(s)- D^{\frac{1}{3}} \left(  \sqrt {\tilde{\eta}(s)} -D^{\frac{1}{3}}  \right)}{\left(\sqrt{\tilde{\eta}(s)}+D^{\frac{1}{3}} \right)^2} +	 		
\end{equation*}	
\begin{equation}\label{D}
\hspace{3.4 cm} +\frac{2}{3\sqrt{3}D^{\frac{4}{3}}}\left[
\arctan\left(\frac{2\sqrt{\tilde{\eta}(s)}-D^{\frac{1}{3}}}{\sqrt{3}D^{\frac{1}{3}}}\right)+\frac{\sqrt{3}\,\tilde{\eta}(s)}
{ D^{-\frac{1}{3}} \tilde{\eta}(s)^{\frac{3}{2}}+D^\frac{2}{3}}\right].
\end{equation}
\end{proposition}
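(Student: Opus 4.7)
My plan is to combine the algebraic constraints on $\tilde X$ and $\tilde Y$ coming from $Q^2_\pm$ with the conservation law $2\tilde\alpha\tilde\eta + \tilde\tau^2 = 0$ (valid for all $s$ because $\tilde\psi(0)\in C$ and $C$ is invariant under the flow \eqref{s3}, as is checked by differentiating $2\tilde\alpha\tilde\eta+\tilde\tau^2$ along the trajectories) and then explicitly integrate the three resulting linear ODEs for $\tilde\tau$, $\tilde x_3$, and the final $\tilde\eta$-integral in \eqref{D}. First, I would write $\tilde X = (\tilde x_1,\tilde x_2,\tilde x_3) \in Q^2_+$, combine $\tilde x_1^2 = \tilde x_2^2 + \tilde x_3^2$ with $\tilde\alpha = -\tilde x_1 + \tilde x_2$ to get $\tilde x_1 + \tilde x_2 = -\tilde x_3^2/\tilde\alpha$, and solve the linear $2\times 2$ system to obtain the stated formulas for $\tilde x_1$ and $\tilde x_2$. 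An identical computation with $\tilde Y\in Q^2_-$ yields the formula for $\tilde Y$ in terms of $\tilde y_3$ and $\tilde\eta$. The conservation law then gives $\tilde\alpha = -\tilde\tau^2/(2\tilde\eta)$.

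Next, I would exploit the decoupling in \eqref{s3} with $c=0$: the third equation becomes $\tilde\eta' = -1/a$, so $\tilde\eta(s) = -s/a + \tilde\eta_0$ on $s < a\tilde\eta_0$. Substituting $\tilde\alpha = -\tilde\tau^2/(2\tilde\eta)$ into the second equation collapses it to the first-order linear equation $\tilde\tau' + \tilde\tau/(2a\tilde\eta) = -\tilde\eta$; switching to $\tilde\eta$ as the independent variable through $ds = -a\,d\tilde\eta$ produces the integrating factor $\tilde\eta^{-1/2}$, and integration with the datum $\tilde\tau(0)=\tilde\tau_0$ gives the stated expression for $\tilde\tau$, which I would then rewrite in the compact form $\tilde\tau = (2a/3)\sqrt{\tilde\eta}(\tilde\eta^{3/2}+D)$. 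For $\tilde x_3$, I would use the unit-speed condition: inserting the parametric formulas for $\tilde x_1,\tilde x_2$ into $-(\tilde x_1')^2+(\tilde x_2')^2+(\tilde x_3')^2 = 1$ and simplifying with $\tilde\alpha' = \tilde\tau$ and $\tilde\tau^2/\tilde\alpha = -2\tilde\eta$ reduces the equation to $(\tilde x_3' - (\tilde\tau/\tilde\alpha)\tilde x_3)^2 = 1$. Solving the linear ODE $\tilde x_3' - (\tilde\tau/\tilde\alpha)\tilde x_3 = \pm 1$ again by switching to $\tilde\eta$ and using the key identity $\int\sqrt{\tilde\eta}/(\tilde\eta^{3/2}+D)\,d\tilde\eta = (2/3)\ln|\tilde\eta^{3/2}+D|$ produces integrating factor $(\tilde\eta^{3/2}+D)^{-2}$ and hence the claimed expression for $\tilde x_3$. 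Finally, $\tilde y_3$ is read off from the third component of the basis identity $e_2 = \tilde\eta \tilde X + \tilde\tau T + \tilde\alpha \tilde Y$ (which follows exactly as in the proof of Proposition \ref{c4p1} from $\{\tilde X, T, \tilde Y\}$ being a basis of $\mathbb{R}^3_1$) after substituting $\tilde x_3'$ from the ODE just solved and using $\tilde\tau^2/\tilde\alpha = -2\tilde\eta$.

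The hardest step will be the closed-form evaluation \eqref{D} of $\int(\tilde\eta^{3/2}+D)^{-2}\,ds$ for $D>0$. I would handle it by the substitution $w=\sqrt{\tilde\eta}$ combined with $ds = -a\,d\tilde\eta$, which reduces the task to $\int w\,(w^3+D)^{-2}\,dw$, and then factor $w^3+D = (w+D^{1/3})(w^2-D^{1/3}w + D^{2/3})$ and expand in partial fractions; the quadratic piece is treated by completing the square, producing the $\arctan$ term, while the simple linear factor produces the logarithm, and both contributions are then bundled back in terms of $\tilde\eta$. The conceptual content is trivial; the bookkeeping of the partial-fraction expansion is the delicate part. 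Finally, the claim that $-\tilde Y$ is a soliton solution to the CF is immediate from Remark \ref{c3t0} together with Proposition \ref{c3p1}, since for a soliton $f\equiv 1$ and the correspondence $\tilde X \leftrightarrow -\tilde Y$ intertwines the ICF and CF soliton equations.
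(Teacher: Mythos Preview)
Your proposal is correct and follows the same overall route as the paper: integrate \eqref{s3} with $c=0$ starting from the decoupled equation $\tilde\eta'=-1/a$, use the invariance of $C$ to replace $\tilde\alpha$ by $-\tilde\tau^2/(2\tilde\eta)$, switch to $\tilde\eta$ as independent variable to solve the resulting linear ODE for $\tilde\tau$, recover $\tilde x_1,\tilde x_2,\tilde y_1,\tilde y_2$ from the cone constraints, and finish with a first-order linear ODE for $\tilde x_3$ whose integrating factor is $(\tilde\eta^{3/2}+D)^{-2}$.

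The one substantive difference is how you reach the $\tilde x_3$-equation. The paper combines the cross-product identity $\tilde\tau=\langle \tilde X\times\tilde Y,(1,1,0)\rangle=\tilde\alpha\,\tilde y_3-\tilde\eta\,\tilde x_3$ with the third component of the basis relation $\tilde\eta\tilde X+\tilde\tau\tilde T+\tilde\alpha\tilde Y=(1,1,0)$, which gives $\tilde x_3'+(2\tilde\eta/\tilde\tau)\tilde x_3=-1$ with a definite right-hand side. Your route via the unit-speed condition yields $(\tilde x_3'-(\tilde\tau/\tilde\alpha)\tilde x_3)^2=1$, which is the same equation but only up to a $\pm1$ sign; both choices give genuine solitons (related by the reflection $\tilde x_3\mapsto-\tilde x_3$), yet only one branch matches the specific formula for $\tilde x_3$ in the statement, so you should either select it or note the symmetry. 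The paper's cross-product argument fixes the sign from the orientation of the frame and is slightly cleaner in that respect. Your treatment of the integral \eqref{D} via $w=\sqrt{\tilde\eta}$ and partial fractions of $w/(w^3+D)^2$ is precisely what underlies the paper's stated formula, which is asserted there without details.
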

\begin{proof}
Suppose that $\tilde{X}(s)$ is a soliton solution to the ICF, i.e. $c=0$.
It follows from system \eqref{s3}, with $c=0$, that 
$\tilde{\eta}(s)=-{s}/{a}+\tilde{\eta}_0$. Let  
$ r(s)=\tilde{\eta}(s)=-{s}/{a}+\tilde{\eta}_0>0$,
$r_0=\tilde{\eta}_0>0$. Then $$
\frac{dr}{ds}=-\frac{1}{a},\hspace{0.3 cm} \displaystyle
\frac{d\tilde{\alpha}}{ds}=-\frac{1}{a}\frac{d\tilde{\alpha}}{dr},\hspace{0.3
cm} \displaystyle
\frac{d\tilde{\tau}}{ds}=-\frac{1}{a}\frac{d\tilde{\tau}}{dr},\hspace{0.3
cm} \displaystyle
\frac{d\tilde{\eta}}{ds}=-\frac{1}{a}\frac{d\tilde{\eta}}{dr} \quad
\text{and} \quad \left\{\begin{array}{ll}
\tilde{\alpha}^{\prime}(r)=-a\tilde{\tau}(r),\\
\displaystyle
\tau^{\prime}(r)=-\frac{\tilde{\alpha}(r)}{\tilde{\tau}(r)}+ar.
\end{array}\right.$$
Using that $2r\tilde{\alpha}(r)+\tilde{\tau}^{2}(r)=0$ we obtain
$\displaystyle
-\frac{\tilde{\alpha}(r)}{\tilde{\tau}(r)}=\frac{\tilde{\tau}(r)}{2r}$ and
$\displaystyle \tilde{\tau}^{\prime}(r)-\frac{\tilde{\tau}(r)}{2r}=ar$.
Taking $\tilde{\tau}(0)=\tau_0$ we have
\begin{equation}\label{t}
\tilde{\tau}(r)=\frac{2ar^{2}}{3}+r^{\frac{1}{2}}C_1=\frac{2ar^{2}}{3}+r^{\frac{1}{2}}\left(\frac{3\tilde{\tau}_0-2a\tilde{\eta}^{2}_0}{3\sqrt{\tilde{\eta}_0}}\right).
\end{equation}
Since $\tilde{\alpha}(s)=-\tilde{x}_1(s)+\tilde{x}_2(s)$,
$\tilde{\eta}(s)=-\tilde{y}_1(s)+\tilde{y}_2(s)$ and $\tilde{X}(s)$ and
$\tilde{Y}(s)$ are curves in $Q^{2}_{+}$ then
$-\tilde{x}^{2}_1(s)+[\tilde{\alpha}(s)+\tilde{x}_1(s)]^{2}+\tilde{x}^{2}_3(s)=0
$,
 $-\tilde{y}^{2}_1(s)+[\tilde{\eta}(s)+\tilde{y}_1(s)]^{2}+\tilde{y}^{2}_3(s)=0$.
Thus,
$$
\displaystyle
2\tilde{\eta}(s)\tilde{y}_1(s)=-\tilde{y}^{2}_3(s)-\tilde{\eta}^{2}(s),\,\,
2\tilde{\eta}(s)\tilde{y}_2(s)=\tilde{\eta}^{2}(s)-\tilde{y}^{2}_3(s), \,\,
\displaystyle
2\tilde{\alpha}(s)\tilde{x}_1(s)=-\tilde{x}^{2}_3(s)-\tilde{\alpha}^{2}(s)\,\,$$
and $\displaystyle
2\tilde{\alpha}(s)\tilde{x}_2(s)=\tilde{\alpha}^{2}(s)-\tilde{x}^{2}_3(s).$
Now, we will determine the functions $\tilde{x}_3(s)$ and $\tilde{y}_3(s)$.
We know that $\tilde{T}(s)=\tilde{X}(s)\times \tilde{Y}(s)$, where
$\tilde{T}(s)$ is the unit tangent vector field and $\tilde{Y}(s)$ is the
associated curve to $\tilde{X}$. Hence,
\begin{eqnarray*}
\tilde{\tau}(s)=\spn{\tilde{T}(s),(1,1,0)}=-\tilde{x}^{\prime}_{1}(s)+\tilde{x}^{\prime}_{2}(s)
=
[-\tilde{x}_{1}(s)+\tilde{x}_{2}(s)]\tilde{y}_{3}(s)-[-\tilde{y}_{1}(s)+\tilde{y}_{2}(s)]\tilde{x}_{3}(s).
\end{eqnarray*}
Thus, $\tilde{\tau}(s)
=\tilde{\alpha}(s)\tilde{y}_{3}(s)-\tilde{\eta}(s)\tilde{x}_{3}(s)$ and
$\displaystyle
\tilde{\alpha}(s)\tilde{y}_{3}(s)=\tilde{\eta}(s)\tilde{x}_{3}(s)+\tilde{\tau}(s)$.
The equation $
\tilde{\eta}(s)\tilde{X}(s)+\tilde{\tau}(s)\tilde{T}(s)+\tilde{\alpha}(s)\tilde{Y}(s)=(1,1,0)$
implies that
$\tilde{\eta}(s)\tilde{x}_{3}(s)+\tilde{\tau}(s)\tilde{x}^{\prime}_{3}(s)+\tilde{\alpha}(s)\tilde{y}_{3}(s)=0$.
Hence,
$2\tilde{\eta}(s)\tilde{x}_{3}(s)+\tilde{\tau}(s)\tilde{x}^{\prime}_{3}(s)+\tilde{\tau}(s)=0$
and
\begin{equation*}
\tilde{x}^{\prime}_{3}(s)+2\frac{\tilde{\eta(s)}}{\tilde{\tau}(s)}\tilde{x}_{3}(s)=-1.
\end{equation*}
Taking $r(s)=\tilde{\eta}(s)$ i.e. $\displaystyle
\frac{d\tilde{x}_{3}}{ds}=-\frac{1}{a}\frac{d\tilde{x}_{3}}{dr}$  and using
\eqref{t}, we obtain
${\tilde{x}^{\prime}_{3}(r)-3\frac{r^{\frac{1}{2}}}
{r^{\frac{3}{2}}+\frac{3}{2a}C_{1}}}\tilde{x}_{3}(r)=a.$  
Thus,
\begin{equation*}
\tilde{x}_{3}(r)=\frac{1}{\mu(r)}\left(a\int{\mu(r)}dr+C\right),\quad
\text{where}\quad \mu(r)=\left(r^{3/2}+\frac{3}{2a}C_{1}\right)^{-2}.
\end{equation*}
Taking $\displaystyle 2aD= 3C_{1}$ we have
$\tilde{x}_{3}(r)=\left(r^{3/2}+D\right)^{2}\left(a\int{(r^{3/2}+D)^{-2}}dr+C\right).
$
If $D>0$ we obtain \eqref{D}.
\end{proof}

\section{Visualizing some Self-Similar Solutions to the CF and ICF}	
In this section, we visualize some examples of self-similar solutions to the CF and ICF on $Q_{+}^{2}$. We consider $Q_{+}^{2}$ parametrized by  $\chi(\rho,\varphi)=(\rho,\rho\, \cos\varphi,\rho\, \sin \varphi),\quad \rho>0$. 
If a curve  $X(s)=\chi(\rho(s),\varphi(s))$ is parametrized by arc length
$s\in I$, then $[\rho(s)\varphi^{\prime}(s)]^{2}=1.$ Taking $\rho(s)\varphi^{\prime}(s)=1$, the unit tangent vector field is given by  $T=\left(\rho^{\prime},\rho^{\prime}\cos\varphi)-\sin \varphi,\rho^{\prime}\sin \varphi+\cos \varphi\right)$. Moreover, 
the functions $\rho(s)$, $\varphi(s)$ and the curvature function $k(s)$ satisfy the following system of  ordinary differential equations  
\begin{equation}\label{ex2}
\left\{\begin{array}{ll}
\displaystyle \rho^{\prime \prime}=\rho k+\frac{1+[\rho^{\prime}]^{2}}{2\rho},\\
\varphi^{\prime}\rho=1.
\end{array}\right.		
\end{equation}
The curve $Y(s)$ associated to $X(s)$ can be written as 
\begin{eqnarray*}
	Y=\frac{1}{2\rho}\left(-{1+[\rho^{\prime}]^{2}},{2\rho^{\prime}\sin \varphi+ [1-(\rho^{\prime})^{2}]\cos \varphi }, {-2\rho^{\prime}\cos \varphi + [1-(\rho^{\prime})^{2}]\sin \varphi } \right).
\end{eqnarray*}

It follows from Theorem \ref{c3t2} that  $X(s)$, is a non trivial self-similar solution to the CF if and only if  there exist $v \in \R_{1}^{3}$, $c,a \in \R$ with $a>0$ such that $k(s)= c+a\spn{T(s),v}$. Whenever $k(s)\neq 0$ it follows from Remark \ref{c3t0} and Proposition \ref{c3p1} that $-Y(s)$ is a self-similar solution to the ICF. Since $k(s)$ has at  most two zeros
(see Theorem \ref{c3t4}) it follows that $-Y(s)$ may have at most three   connected components which are self-similar solutions to the ICF. 
 In what follows, we use \eqref{ex2} and the software \it Maple \rm to plot $X(s)$ and $-Y(s)$ on $Q_{+}^{2}$, for several choices of $c,\, a$
and $v$.
In Figures 1-3, we visualize soliton solutions  ($c=0$) $X$ to the CF and $-Y$ solutions to the ICF on $Q_{+}^{2}$, when $v$ is a timelike, lightlike and spacelike vector, respectively. Similarly, in Figures 4-7 we visualize  self-similar solutions $X$ to the CF and $-Y$ solutions to the ICF for $c<0$ and in Figures 8-10 for $c>0$.

By considering  $a=1,\, c=0$ and the timelike vector $v=(1,0,0)$, in Figure \eqref{f1a}, we visualize a soliton solution $X$ to the CF and 
  Figure \eqref{f1b} shows the graph of its curvature. It follows from Lemmas \ref{c40l3} and  \ref{c40l5} that $k(s)$ has a unique zero and  $\lim_{s \to +\infty}k(s)=0$. The curve $-Y$ associated to $X$ has two connected components visualized in Figures \eqref{f1c} and \eqref{f1d}, which are  soliton solutions to the ICF. 
\begin{figure}[h!]
	\centering	
	\subfloat[]{\includegraphics[height=2.3 cm]{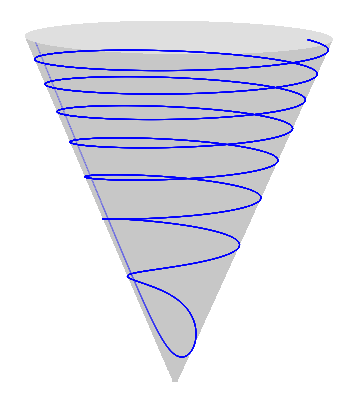}
\label{f1a}
} 
\quad
\subfloat[]{
\includegraphics[height=2.3 cm]{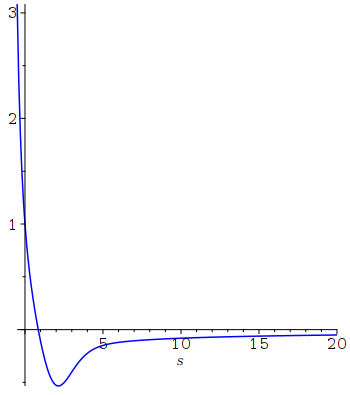}
\label{f1b}
}
\quad	
	\subfloat[]{
		\includegraphics[height=2.3 cm]{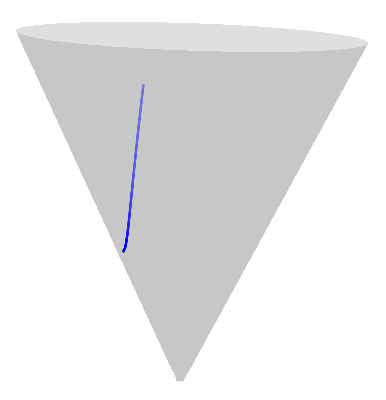}
\label{f1c}
	}
	\quad
	\subfloat[]{
		\includegraphics[height=2.3 cm]{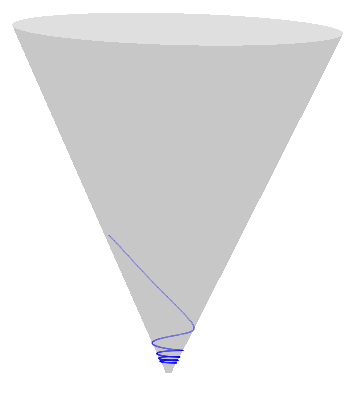}
		\label{f1d}
	}
	\caption{Soliton solutions (a) to the CF and (c) (d) to ICF on $Q^{2}_{+}$ with  $c=0$, $a=1$, $v=(1,0,0)$.}
	\label{fig1}
\end{figure} 

Proposition \ref{c4ld1} provides explicit soliton solutions $-\tilde{Y}$ to the CF  and $\tilde{X}$ to the ICF. Considering $c=0$, $a=1$ and the lightlike vector $v=(1,1,0)$, one can choose the initial conditions  so that the constant $D=0$. The soliton solutions $\tilde{Y}(s)$ and $\tilde{X}(s)$ are defined for $s<1$ and they are visualized in Figures \eqref{f2a} and \eqref{f2c} respectively.  Figure \ref{f2b} provides the graph of the curvature of $\tilde{Y}(s)$.
\begin{figure}[h!]
	\centering
	\subfloat[]{\includegraphics[height=2.3 cm]{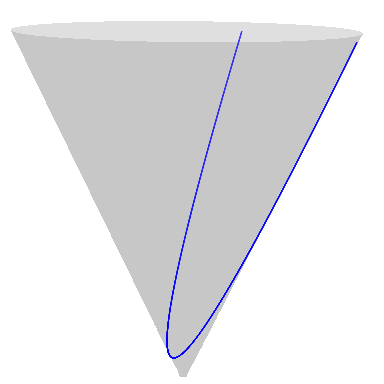}
		\label{f2a}
	}
	\subfloat[]{\includegraphics[height=2.2 cm]{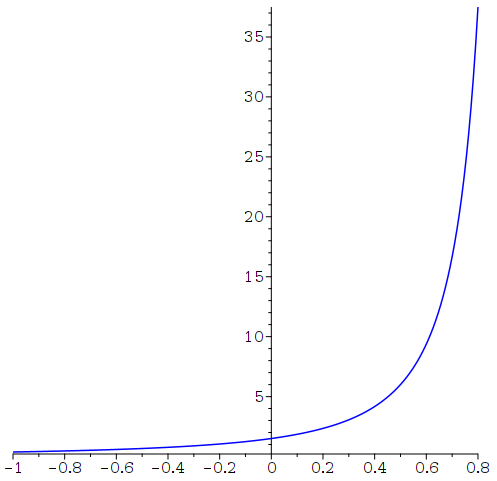}
		\label{f2b}
	}
	\subfloat[]{\includegraphics[height=2.3 cm]{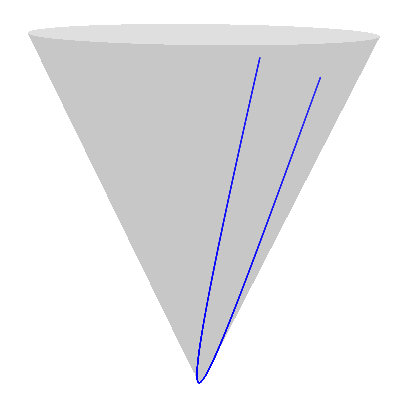}
		\label{f2c}
	}
	\caption{Soliton solutions (a) to the CF and (c) to ICF on $Q^{2}_{+}$ with $c=0$, $a=1$ and $v=(1,1,0)$.}
	\label{fig2}
\end{figure} 

In Figure \ref{f3a}, we visualize a soliton solution $X$ to the CF, by choosing  $c=0$, $a=1$ and the spacelike vector $v=(0,0,1)$. Figure \ref{f3b} shows the graph of $k(s)$. Since the curvature vanishes at two points, the curve $-Y$ associated to $X$ has three connected  components which are solutions to the ICF and they are visualized in  Figures \ref{f3c}, \ref{f3d} and \ref{f3e}.
\begin{figure}[h!]
	\centering
	\subfloat[]{\includegraphics[height=2.2 cm]{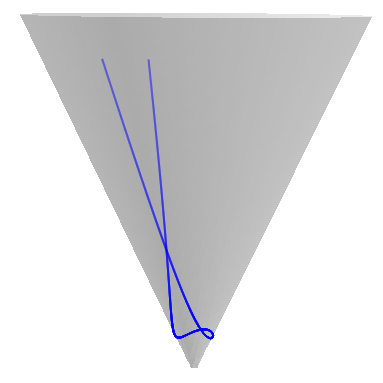}
		\label{f3a}
	}
\subfloat[]{\includegraphics[height=2.2 cm]{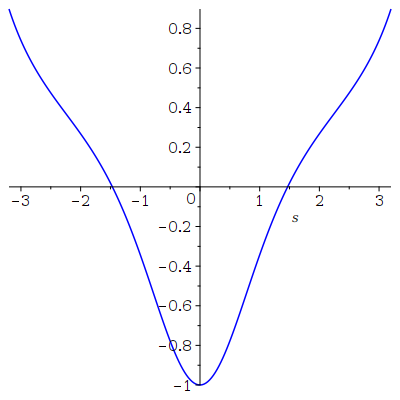}
	\label{f3b}
}
\subfloat[]{\includegraphics[height=2.2 cm]{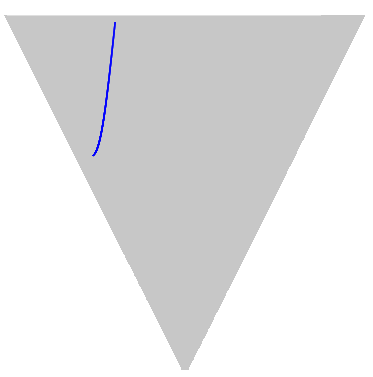}
	\label{f3c}
}
\subfloat[]{\includegraphics[height=2.2 cm]{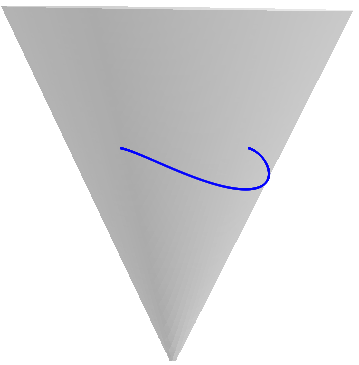}
	\label{f3d}
}
\subfloat[]{\includegraphics[height=2.2 cm]{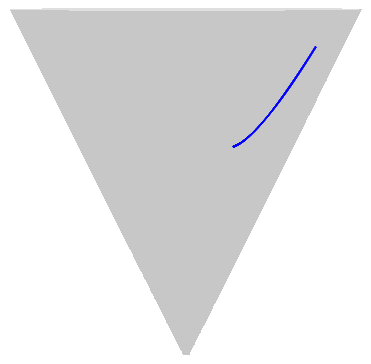}
	\label{f3e}
}
\caption{Soliton solutions  to the CF and  ICF on $Q^{2}_{+}$ with $c=0$, $a=1$, $v=a(0,0,1)$.}
	\label{fig3}
\end{figure} 

When $c<0$ and $v=a(1,0,0)$ is a timelike vector, then Lemma \ref{c4-1l1} 
implies that $c$ determines the singular point $p$ of $\Phi$ and  
the  associated eigenvalues are real  if $4|c|\leq a$ and complex if $4|c|>a$. In Figures 4 and 5, we visualize the self-similar solutions to the CF and to the ICF in each case.  
  Thus, $\psi(s)=p$, $s\in I$ is the singular solution of \eqref{s2} and it corresponds to a circle with radius $\displaystyle -1/(2c)$ in $Q^{2}_{+}$, and its associated curve is a circle with radius  
 $ -c/2$ in $Q^{2}_{-}$.  
 Moreover,  Lemma \ref{c4-1l6} implies that $ \lim_{s \to +\infty}k(s)=c$ and $\psi(s)=p$, $s \in I$ is a 
global attractor solution of \eqref{s2} in the set $H$.

We now consider self-similar solution with $c<0$. In Figure \ref{f4a}, we visualize a self-similar solution $X$ to the CF on $Q_{+}^{2}$ for  $ c=-{1}/{4}$,  $ a={1}/{6}$ and $v=a(1,0,0)$ is a timelike vector. Figure \ref{f4b} shows the graph of $k(s)$, which vanishes at one point. In Figure \ref{f4c}, we visualize one component of the self-similar solution  to the ICF on $Q_{+}^{2}$, associated to $X$,  that corresponds to $k(s)<0$.
\begin{figure}[h!]
	\centering
	\subfloat[]{\includegraphics[height=2.8 cm]{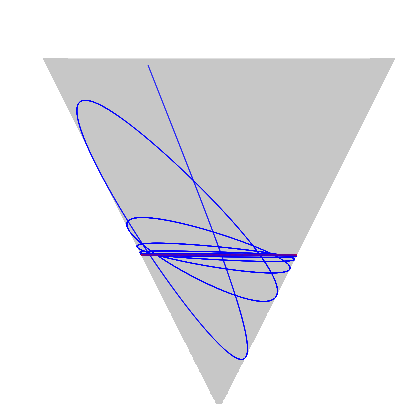}
		\label{f4a}
	}\quad
	\subfloat[]{\includegraphics[height=2.5 cm]{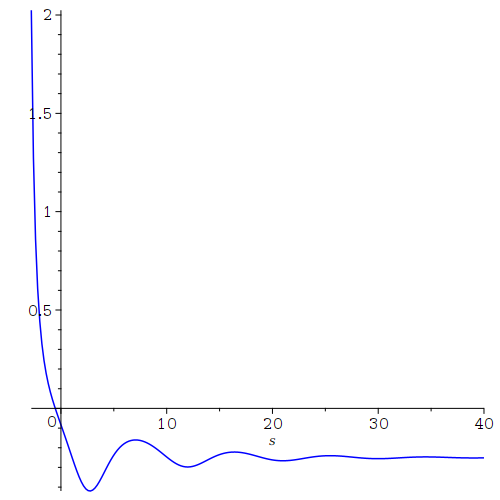}
		\label{f4b}
	}\quad
	\subfloat[]{\includegraphics[height=2.6 cm]{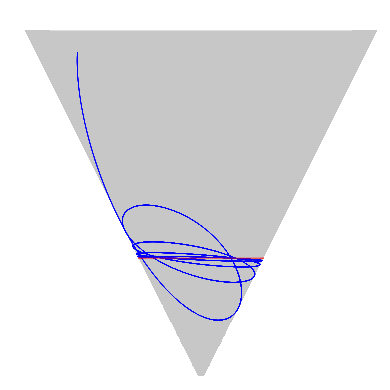}
		\label{f4c}
	}
	\caption{Self-similar solutions to the CF and  ICF on $Q^{2}_{+}$ with $c=-\frac{1}{4}$, $a=\frac{1}{6}$, $v=a(1,0,0)$.}
	\label{fig4}
\end{figure} 

In Figure \ref{f5a}, we visualize a self-similar solution $X$ to the CF on $Q_{+}^{2}$ for  $ c=-{1}/{4}$, $a=5$ and $v=a(1,0,0)$ is a timelike vector.  Figure \ref{f5b} shows the graph of $k(s)$, which vanishes at one point. In Figure \ref{f5c} we visualize one component of the  self-similar solution to the ICF on $Q_{+}^{2}$ that corresponds to $k(s)<0$.
\begin{figure}[h!]
	\centering
	\subfloat[]{\includegraphics[height=2.8 cm]{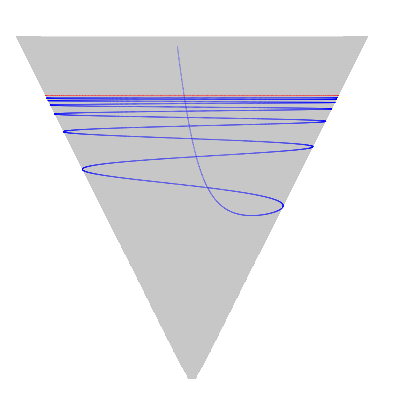}
		\label{f5a}
	}\quad
	\subfloat[]{\includegraphics[height=2.8 cm]{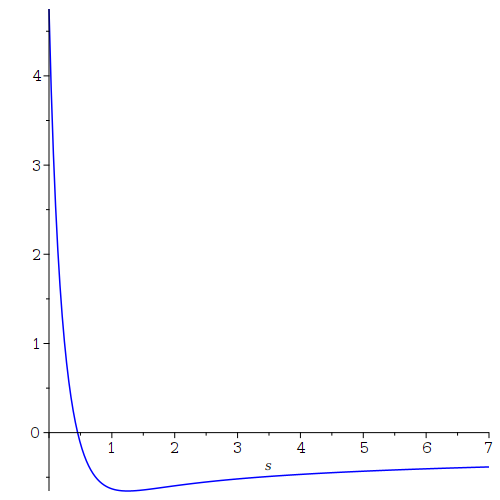}
		\label{f5b}
	}\quad
	\subfloat[]{\includegraphics[height=2.8 cm]{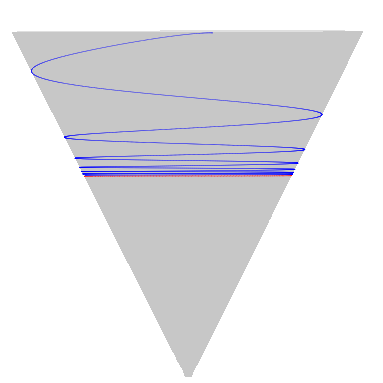}
		\label{f5c}
	}
	\caption{Self-similar solutions to the CF and ICF on $Q^{2}_{+}$ with $c=-\frac{1}{4}$, $a=5$ and $v=a(1,0,0)$.}
	\label{fig5}
\end{figure} 

In Figure \ref{f6a}, we visualize a self-similar solution $X$ to the CF in $Q_{+}^{2}$ when $ c=-2$, $a={1}/{2}$ and $v=a(1,1,0)$ is a lightlike vector. Figure \ref{f6b} shows the graph of $k(s)$, which vanishes at one point. It follows from Lemma \ref{c4-1l6} that $ \lim_{s \to +\infty}k(s)=c=-2$. In Figure \ref{f6c},  we visualize 
one component of the self-similar solution to the ICF on $Q_{+}^{2}$, associated to $X$, that corresponds to  $k(s)<0$.
\begin{figure}[h!]
	\centering
	\subfloat[]{\includegraphics[height=2.8 cm]{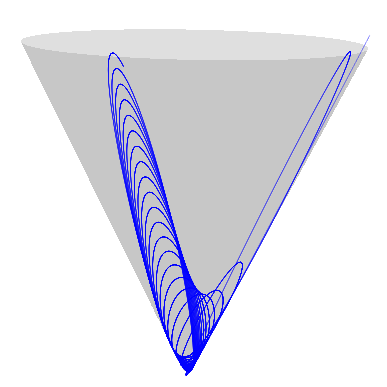}
		\label{f6a}
	}\quad
	\subfloat[]{\includegraphics[height=2.8 cm]{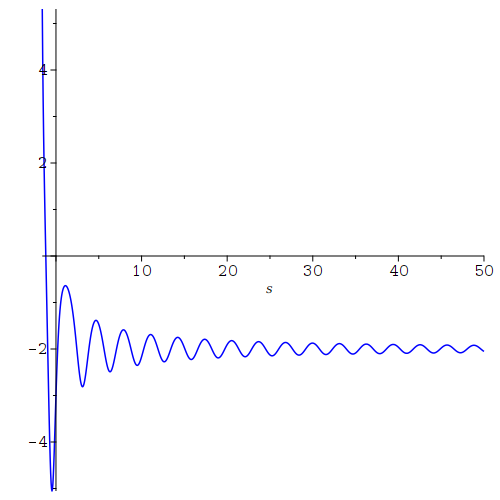}
		\label{f6b}
	}\quad
	\subfloat[]{\includegraphics[height=2.8 cm]{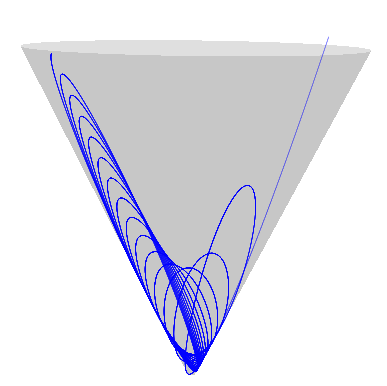}
		\label{f6c}
	}
	\caption{Self-similar solutions to the CF and ICF on $Q^{2}_{+}$ with $c=-2$, $a=\frac{1}{2}$ and $v=a(1,1,0)$.}
	\label{fig6}
\end{figure} 

In Figure \ref{f7a}, we visualize a self-similar solution $X$ to the CF on $Q_{+}^{2}$ when  $ c=-1$,  $ a=1$ and $v=(0,0,1)$ is a spacelike vector.  Figure \ref{f7b} shows the graph of $k(s)$ which vanishes at two points. In Figures \ref{f7c}, \ref{f7d} and \ref{f7e},  we visualize the three connected components of the self-similar solution to the ICF on $Q_{+}^{2}$, associated to $X$. 
\begin{figure}[h!]
	\centering
	\subfloat[]{\includegraphics[height=2.0 cm]{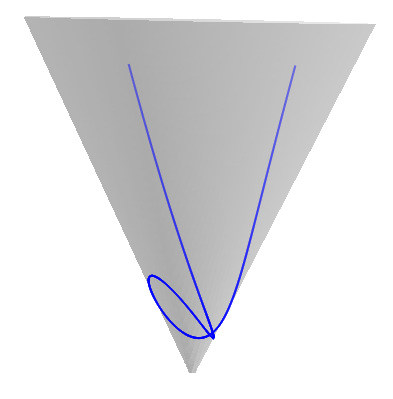}
		\label{f7a}
	}\quad
	\subfloat[]{\includegraphics[height=1.9 cm]{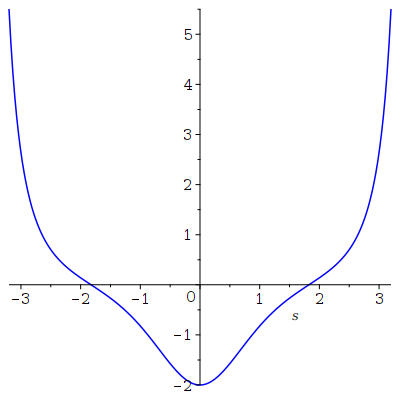}
		\label{f7b}
	}\quad
	\subfloat[]{\includegraphics[height=2.0 cm]{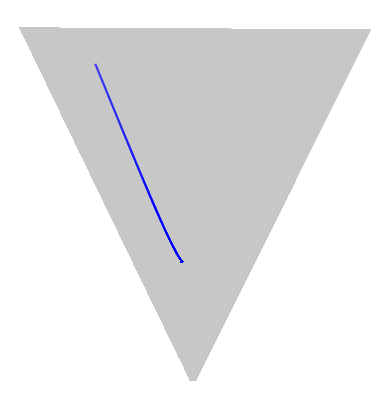}
		\label{f7c}
	}
\subfloat[]{\includegraphics[height=1.9 cm]{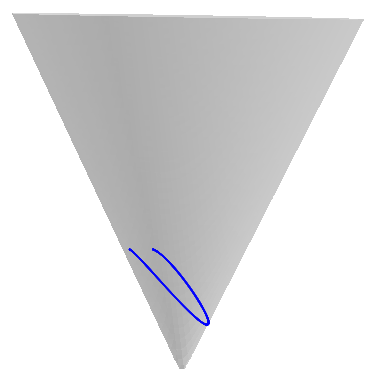}
	\label{f7d}
}\subfloat[]{\includegraphics[height=1.9 cm]{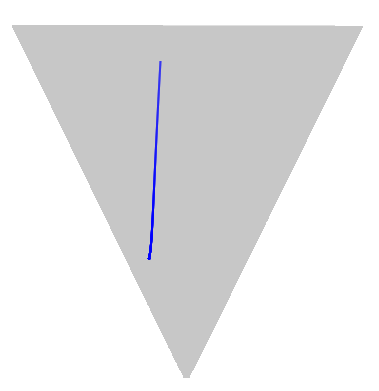}
	\label{f7e}}
\caption{Self-similar solutions to the CF and ICF on $Q^{2}_{+}$ with $c=-1$, $a=1$ and $v=(0,0,1)$.}
	\label{fig7}
\end{figure} 
We conclude considering  self-similar solutions with $c>0$.  In Figure \ref{f8a}, we visualize a self-similar solution $X$ to the CF on $Q_{+}^{2}$ when  $ c=4$, $ a=1$ and $v=(1,0,0)$ is a timelike vector.  Figure \ref{f8b} shows the graph of $k(s)$, which vanishes at one point.  Lemma \ref{c41l8} implies that $ \lim_{s \to +\infty}k(s)=0.$ Finally, in Figures \ref{f8c} and \ref{f8d} we visualize the two components of the  self-similar solution to the ICF on $Q_{+}^{2}$ associated to $X$.
\begin{figure}[h!]
	\centering
	\subfloat[]{\includegraphics[height=2.1 cm]{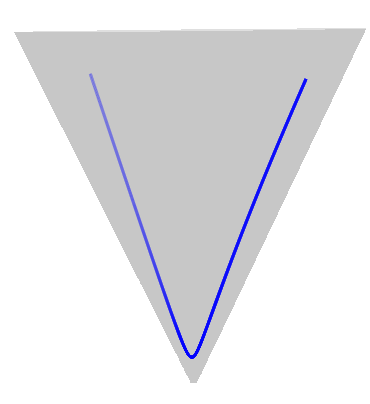}
		\label{f8a}
	}\quad
	\subfloat[]{\includegraphics[height=2.0 cm]{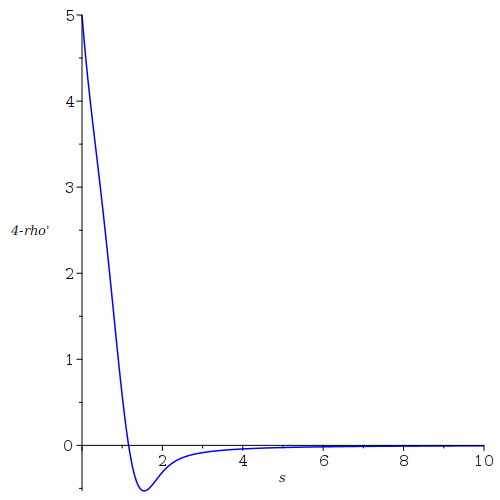}
		\label{f8b}
	}\quad
	\subfloat[]{\includegraphics[height=2.0 cm]{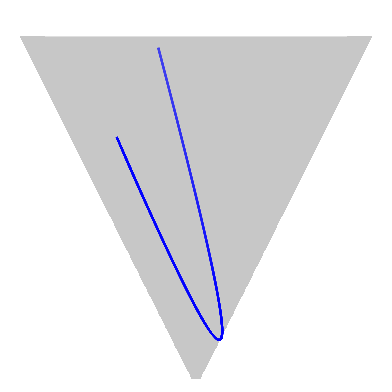}
		\label{f8c}
	}
\subfloat[]{\includegraphics[height=2.1 cm]{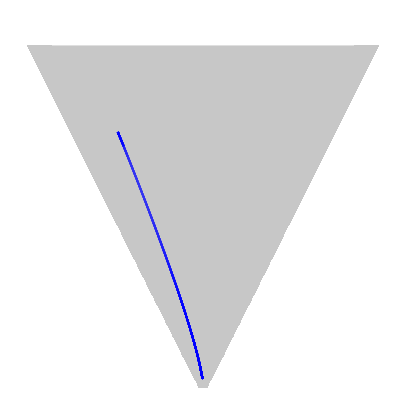}
	\label{f8d}}
	\caption{Self-similar solutions to the CF and ICF on $Q^{2}_{+}$ with $c=4$, $a=1$ and $v=(1,0,0)$.}
	\label{fig8}
\end{figure} 
In Figure \ref{f9a}, we visualize a self-similar solution $X$ to the CF on $Q_{+}^{2}$ when  $c=4$,  $a=1$  and $v=(1,1,0)$ is a lightlike vector.  Figure \ref{f9b} shows the graph of $k(s)$, which vanishes at one point.  Lemma \ref{c41l9} implies that when $s$ tends to $+\infty$ then $k(s)$ tends to $0$ or $c$.  In Figures \ref{f9c} and \ref{f9d} we visualize the two components of the self-similar solutions to the ICF on $Q_{+}^{2}$ associated to $X$.
\begin{figure}[h!]
	\centering
	\subfloat[]{\includegraphics[height=2.0 cm]{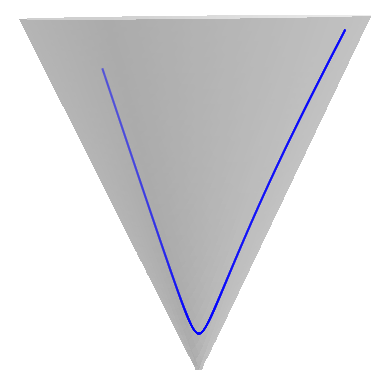}
		\label{f9a}
	}\quad
	\subfloat[]{\includegraphics[height=2.0 cm]{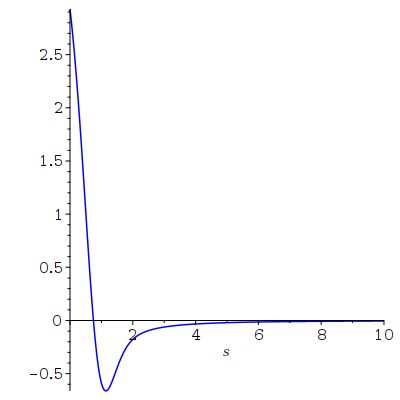}
		\label{f9b}
	}\quad
	\subfloat[]{\includegraphics[height=2.2 cm]{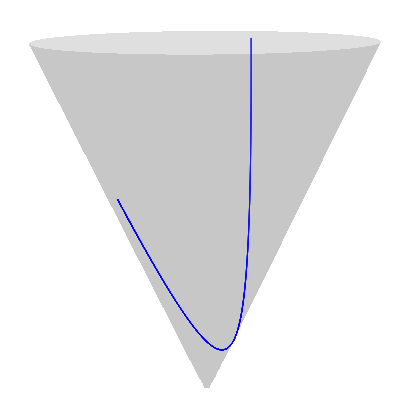}
		\label{f9c}
	}
	\subfloat[]{\includegraphics[height=2.1 cm]{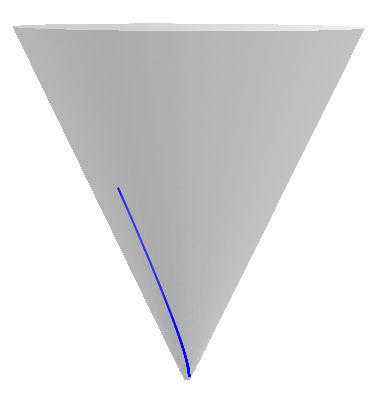}
		\label{f9d}}
	\caption{Self-similar solutions to the CF and ICF on $Q^{2}_{+}$ with $c=4$, $a=1$ and $v=(1,1,0)$.}
	\label{fig9}
\end{figure} 

Finally, in Figure \ref{f10a}, we visualize a self-similar solution $X$ to the CF on $Q_{+}^{2}$ when  $ c=3$, $ a=1$ and $v=(0,0,1)$ is a spacelike vector. Figure \ref{f10b} shows  the graph of $k(s)$, which vanishes at two points. Finally, in Figures \ref{f10c}, \ref{f10d} and \ref{f10e} we visualize the three components of the self-similar solutions to the ICF on $Q_{+}^{2}$ associated to $X$.

\begin{figure}[h!]
	\centering
	\subfloat[]{\includegraphics[height=2.0 cm]{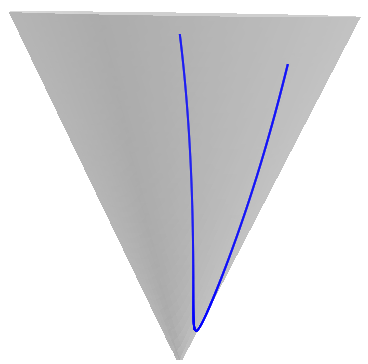}
		\label{f10a}
	}\quad
	\subfloat[]{\includegraphics[height=2.0 cm]{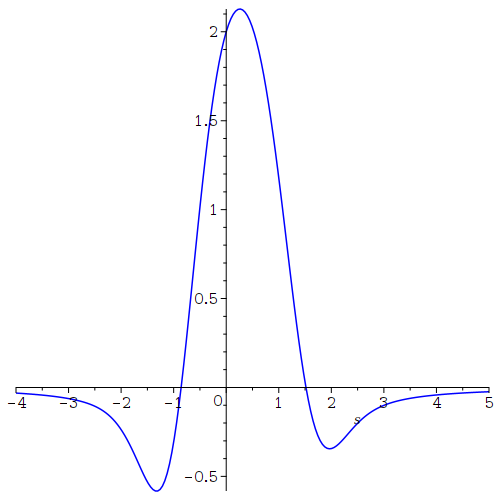}
		\label{f10b}
	}
	\subfloat[]{\includegraphics[height=1.95 cm]{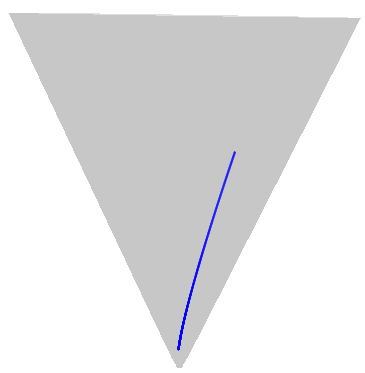}
		\label{f10c}
	}
	\subfloat[]{\includegraphics[height=1.9 cm]{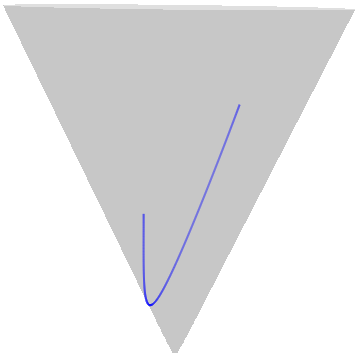}
		\label{f10d}}	
	\subfloat[]{\includegraphics[height=1.9 cm]{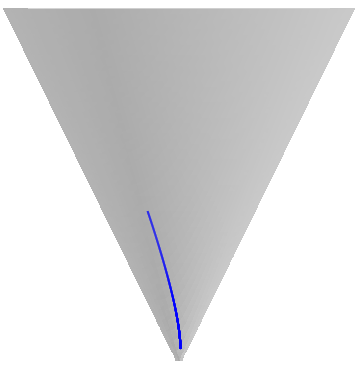}
		\label{f10e}}
	\caption{Self-similar solutions to the CF and ICF in $Q^{2}_{+}$ with $c=3$, $a=1$ and $v=(0,0,1)$.}
	\label{fig10}
\end{figure} 


\end{document}